\theoremstyle{definition}
\newtheorem{definition}{Definition}[section]
\theoremstyle{lemma}
\newtheorem{theorem}{Theorem}
\newtheorem{corollary}{Corollary}[theorem]
\newtheorem{lemma}[theorem]{Lemma}
\newtheorem{proposition}{Proposition}
\newcommand{\norm}{\text{Norm}}
\newcommand{\nrd}{\text{nrd}}
\newcommand{\trace}{\text{Trace}}
\begin{document}
\title{Reduction Theory of Algebraic Modules and their Successive Minima}
\author{Christian Porter\thanks{Department of Electrical and Electronic Engineering, Imperial College London, United Kingdom. Corresponding author, c.porter17@imperial.ac.uk}, Cong Ling\thanks{Department of Electrical and Electronic Engineering, Imperial College London, United Kingdom. c.ling@imperial.ac.uk}}
\maketitle
\begin{abstract}
    Lattices defined as modules over algebraic rings or orders have garnered interest recently, particularly in the fields of cryptography and coding theory. Whilst there exist many attempts to generalise the conditions for LLL reduction to such lattices, there do not seem to be any attempts so far to generalise stronger notions of reduction such as Minkowski, HKZ and BKZ reduction. Moreover, most lattice reduction methods for modules over algebraic rings involve applying traditional techniques to the embedding of the module into real space, which distorts the structure of the algebra.
    \\ In this paper, we generalise some classical notions of reduction theory to that of free modules defined over an order. Moreover, we extend the definitions of Minkowski, HKZ and BKZ reduction to that of such modules and show that bases reduced in this manner have vector lengths that can be bounded above by the successive minima of the lattice multiplied by a constant that depends on the algebra and the dimension of the module. In particular, we show that HKZ reduced bases are polynomially close to the successive minima of the lattice in terms of the module dimension. None of our definitions require the module to be embedded and thus preserve the structure of the module.
\end{abstract}
\section{Introduction}
Reduction theory of lattices is the study of representing the basis of a lattice in a manner such that the basis exhibits desirable properties, initially spurred by the study of the minima of positive definite quadratic forms and showing equivalence between two forms. There are a number of more precise definitions of what constitutes a reduced lattice basis, and perhaps the most widely known but distinct definitions of reduced bases are respectfully attributed to Lenstra-Lenstra-Lov\`asz, Korkin-Zolotarev and Minkowski (see \cite{LLL}, \cite{KZ}, \cite{quadratische}). The study of lattice reduction has gained traction in recent years due to the rise of cryptosystems based on lattice problems (see e.g. \cite{knapsack}, \cite{knapsack2}), and also in coding theory (see e.g. \cite{spheredecode}). Lately, generalisations of real lattices to those spanned over algebraic fields have emerged as a contender for classical lattices, both in cryptography for their relative compactness in terms of key size required to define them \cite{keysize} and in coding theory for the fine structure of lattices defined over such algebras \cite{rayleigh}. Whilst ``weak'' definitions of reduced lattices, such as LLL reduced lattices, have been extended to that of their algebraic counterpart (for just a few examples in literature, see \cite{euclidean}, \cite{latticenumberfield}, \cite{shortnumberfield}, \cite{napias}), similar research into ``strong'' definitions of reduced lattice bases is somewhat limited. In this work, we establish the algebraic counterpart of some classical notions of lattice reduction, namely Minkowski, HKZ and BKZ reduced bases, and taking inspiration from \cite{quadratische2}, \cite{HKZ} and \cite{schnorr} we prove that bases reduced in this manner exhibit properties that are deemed desirable in reduction theory.
\section{Preliminaries}
We begin by defining some familiar concepts in algebraic number theory and lattice theory. For any concepts concerning central division algebras that we have left unexplained, we refer the reader to \cite{maxorders}. Moreover, throughout the paper we assume all division algebras in question are central division algebras. Denote by $K$ some division algebra, and let $\mathcal{O}$ be some order of $K$. We say that $\mathcal{O}$ is \emph{left-Euclidean} (respectively \emph{right-Euclidean}) if there exists a function $\phi: \mathcal{O} \to \mathbb{Z}^+$ such that, for all $a,b \in \mathcal{O}$, there exist $q,r \in \mathcal{O}$ such that $a=qb+r$ for some $q,r \in \mathcal{O}$, $r=0$ or $0<\phi(r)<\phi(b)$ (respectively, we take $bq$ instead of $qb$ is the order is right-Euclidean). In this paper, we will only cover associative division algebras, and so multiplicative operations throughout the paper will be assumed to be associative. The following definition of a lattice will be used throughout the rest of this piece of work. Here, by $x \mathbf{v}$ where $x \in K$, $\mathbf{v}=(v_1,\dots,v_D) \in K^D$, multiplication by a vector is defined componentwise, where the direction of multiplication is defined by the position of $x$, i.e. $x\mathbf{v}=(xv_1,xv_2,\dots,xv_D)$, $\mathbf{v}x=(v_1 x, v_2 x, \dots, v_D x)$.
\begin{definition}
Let $K$ be a division algebra, and $\mathcal{O}$ some order of $K$. Suppose that $\Lambda$ is a left- (or right-) module over $\mathcal{O}$. We say that $\Lambda$ is a \emph{left-sided lattice} (or respectively \emph{right-sided lattice}) of dimension $d$ if $\Lambda$ has the representation
\begin{align*}
    \Lambda= \bigoplus_{i=1}^d\mathcal{O} \mathbf{b}_i,
\end{align*}
(respectively $\mathbf{b}_i\mathcal{O}$ if $\Lambda$ is a right-sided lattice), where $\mathbf{b}_i \in K^D$ for some integers $1 \leq d \leq D$, and each $\mathbf{b}_i$ is linearly independent over $K$. The set $B=\{\mathbf{b}_1,\dots,\mathbf{b}_d\}$ is said to be the basis of $\Lambda$.
\end{definition}
From now on, we will only refer to left-sided lattices, and we will refer to them simply as ``lattices'' unless we need to specify otherwise.
\begin{definition}
Let $K$ be a division algebra and denote by $\Lambda, \Lambda^\prime$ lattices spanned over an order $\mathcal{O}$ of $K$ with bases $B=\{\mathbf{b}_1,\dots,\mathbf{b}_d\},B^\prime=\{\mathbf{b}_1^\prime, \dots, \mathbf{b}_d^\prime\}$, respectively. We say that $\Lambda,\Lambda^\prime$ are \emph{equivalent} if the two modules are isomorphic, that is, for every element $\mathbf{v} \in \Lambda$, $\mathbf{v}$ is also contained in $\Lambda^\prime$. We say that a set $S=\{\mathbf{s}_1,\dots,\mathbf{s}_k\}, \mathbf{s}_i \in K^D$ for some $1 \leq k \leq d \leq D$ and $S$ is linearly independent over $K$, is \emph{extendable to a basis} for $\Lambda$ if there exists an equivalent lattice $\Lambda^\prime $ with basis $B^\prime $ where $S \subseteq B^\prime $.
\end{definition}
\begin{proposition}\label{equivalent}
Let $\Lambda, \Lambda ^\prime$ be equivalent lattices, and assume $B^\prime$ is a basis for $\Lambda^\prime$. Then $B ^\prime$ is also a basis for $\Lambda$.
\end{proposition}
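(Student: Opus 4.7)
The plan is to observe that the proposition is essentially an unwinding of the definition of equivalence. The definition states that $\Lambda$ and $\Lambda'$ are equivalent if they are isomorphic as $\mathcal{O}$-modules, with the clarification that every $\mathbf{v} \in \Lambda$ also lies in $\Lambda'$; since the word ``equivalent'' denotes a symmetric relation, the same must hold with the roles of $\Lambda$ and $\Lambda'$ reversed, so $\Lambda$ and $\Lambda'$ coincide as subsets of $K^D$. The task then reduces to checking that the defining properties of a basis (linear independence over $K$ and generating the module as an internal direct sum of the cyclic left submodules $\mathcal{O}\mathbf{b}_i'$) transfer verbatim from $\Lambda'$ to $\Lambda$.

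Concretely, I would proceed in two short steps. First, I would argue set-theoretic equality: given $\mathbf{v} \in \Lambda$, equivalence gives $\mathbf{v} \in \Lambda'$, and given $\mathbf{v}' \in \Lambda'$, the symmetric reading of equivalence gives $\mathbf{v}' \in \Lambda$, so $\Lambda = \Lambda'$ as subsets of $K^D$. Second, since $B' = \{\mathbf{b}_1',\dots,\mathbf{b}_d'\}$ is a basis of $\Lambda'$, the vectors $\mathbf{b}_i'$ are $K$-linearly independent (a property that depends only on the vectors themselves, not the ambient module) and satisfy $\Lambda' = \bigoplus_{i=1}^d \mathcal{O}\mathbf{b}_i'$. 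Substituting $\Lambda$ for $\Lambda'$ on the left, we obtain $\Lambda = \bigoplus_{i=1}^d \mathcal{O}\mathbf{b}_i'$, which is precisely the condition required for $B'$ to be a basis of $\Lambda$.

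The only place where there is anything to think about is the interpretation of ``equivalent.'' The definition given is a mixture of the module-theoretic notion (``isomorphic'') and a set-theoretic criterion (``every element of one lies in the other''); the two agree in this context because both $\Lambda$ and $\Lambda'$ sit inside the common ambient space $K^D$ and the isomorphism is implicitly the identity map on $K^D$. Once this is noted, no further obstacle remains, and the conclusion follows immediately from $\Lambda=\Lambda'$. I expect the written proof to occupy only a couple of sentences.
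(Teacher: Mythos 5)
Your proof is correct and follows essentially the same route as the paper: read the definition of equivalence as set-theoretic equality of $\Lambda$ and $\Lambda'$ inside $K^D$, then observe that the basis conditions transfer verbatim. The paper states this in a single sentence; your version just spells out the symmetry step and the verification of the basis axioms a bit more explicitly.
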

\begin{proof}
This follows from the definition of equivalent lattices, since any $\mathbf{v} \in \Lambda ^\prime$ is also in $\Lambda$, and so the basis describing $\Lambda ^\prime$ also describes $\Lambda$.
\end{proof}
\begin{proposition}
Let $\Lambda$ be a $d$-dimensional lattice spanned over an order $\mathcal{O}$ of a division algebra $K$ with unit group $\mathcal{O}^*$, and let $B=\{\mathbf{b}_1,\dots,\mathbf{b}_d\}$ be a basis for $\Lambda$. Then, for any $1 \leq k \leq d$, $x_i \in \mathcal{O}$ and $u \in \mathcal{O}^*$, $\left\{\mathbf{b}_1,\dots, \mathbf{b}_{k-1},u\mathbf{b}_k+\sum_{i=1: i \neq k}^dx_i\mathbf{b}_i,\mathbf{b}_{k+1},\dots,\mathbf{b}_d\right\}$ is extendable to a basis for $\Lambda$.
\end{proposition}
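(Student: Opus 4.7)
The plan is to show that the set $B' = \{\mathbf{b}_1,\dots,\mathbf{b}_{k-1}, \mathbf{b}_k', \mathbf{b}_{k+1},\dots,\mathbf{b}_d\}$, with $\mathbf{b}_k' \defeq u\mathbf{b}_k + \sum_{i \neq k}x_i \mathbf{b}_i$, is itself a basis for a lattice $\Lambda'$ that is equivalent to $\Lambda$; by Proposition \ref{equivalent}, $B'$ is then already a basis for $\Lambda$ (in particular, extendable to one). The argument splits into three short steps: linear independence of $B'$ over $K$, containment $\Lambda' \subseteq \Lambda$, and containment $\Lambda \subseteq \Lambda'$, with the last being the only place where the unit hypothesis $u \in \mathcal{O}^*$ is needed.

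First I would verify linear independence. Suppose $\sum_{i=1}^d a_i \mathbf{b}_i' = 0$ with $a_i \in K$. Substituting the definition of $\mathbf{b}_k'$ and regrouping yields $(a_k u)\mathbf{b}_k + \sum_{i \neq k}(a_k x_i + a_i)\mathbf{b}_i = 0$. Linear independence of the original basis $B$ forces $a_k u = 0$ and $a_k x_i + a_i = 0$ for $i \neq k$; since $K$ is a division algebra and $u \neq 0$, we get $a_k = 0$, and then $a_i = 0$ for every $i$. Thus $B'$ spans a well-defined $d$-dimensional lattice $\Lambda' \defeq \bigoplus_{i=1}^d \mathcal{O}\mathbf{b}_i'$.

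Next, containment $\Lambda' \subseteq \Lambda$ is immediate, since each $\mathbf{b}_i'$ is by construction an $\mathcal{O}$-linear combination of the $\mathbf{b}_j$. For the reverse containment, the point is to recover $\mathbf{b}_k$ from $B'$. Using $u \in \mathcal{O}^*$ and the associativity of the module action, left-multiplying $\mathbf{b}_k'$ by $u^{-1} \in \mathcal{O}$ gives
\begin{align*}
u^{-1}\mathbf{b}_k' = \mathbf{b}_k + \sum_{i \neq k} (u^{-1}x_i)\mathbf{b}_i,
\end{align*}
so that
\begin{align*}
\mathbf{b}_k = u^{-1}\mathbf{b}_k' - \sum_{i \neq k}(u^{-1}x_i)\mathbf{b}_i' \in \Lambda',
\end{align*}
where we used $\mathbf{b}_i' = \mathbf{b}_i$ for $i \neq k$. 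All other $\mathbf{b}_i$ are already in $\Lambda'$, so $\Lambda \subseteq \Lambda'$, giving equivalence of $\Lambda$ and $\Lambda'$.

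The only subtlety, and hence the main thing to be careful about, is the one-sidedness of the action: because we work with left-sided lattices, we must left-multiply $\mathbf{b}_k'$ by $u^{-1}$ and rely on associativity to conclude $u^{-1}(u\mathbf{b}_k) = \mathbf{b}_k$. This is precisely where the hypothesis $u \in \mathcal{O}^*$ (rather than merely $u \in \mathcal{O}$) is essential; without it, $\mathbf{b}_k$ would in general not lie in $\Lambda'$. An analogous argument with right-multiplication handles the right-sided case, and conclude by invoking Proposition \ref{equivalent} to promote the basis of $\Lambda'$ to a basis of $\Lambda$.
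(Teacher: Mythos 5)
Your proof is correct and takes essentially the same approach as the paper: both arguments establish mutual containment $\Lambda' \subseteq \Lambda$ and $\Lambda \subseteq \Lambda'$, with the key step being left-multiplication by $u^{-1}$ to invert the change of basis, and then invoke the notion of equivalent lattices. Your added check of linear independence over $K$ and the explicit recovery of $\mathbf{b}_k$ are minor stylistic refinements rather than a different route.
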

\begin{proof}
Let $\Lambda^\prime$ be the lattice with the basis $\left\{\mathbf{b}_1,\dots, \mathbf{b}_{k-1},u\mathbf{b}_k+\sum_{i=1: i \neq k}^dx_i\mathbf{b}_i,\mathbf{b}_{k+1},\dots,\mathbf{b}_d\right\}$. Then, for some $y_1,\dots,y_d \in \mathcal{O}$, we have
\begin{align*}
    \sum_{i=1: i \neq k}^dy_i\mathbf{b}_i+y_k\left(u\mathbf{b}_k+\sum_{i=1: i \neq k}^dx_i\mathbf{b}_i\right)=\sum_{i=1: i \neq k}^d (y_i+y_kx_i)\mathbf{b}_i+(y_ku)\mathbf{b}_k \in \Lambda,
\end{align*}
and for any $z_1,\dots,z_d \in \mathcal{O}$, we have
\begin{align*}
    \sum_{i=1}^dz_i\mathbf{b}_i=\sum_{i=1: i\neq k}^dz_i\mathbf{b}_i+z_k\mathbf{b}_k=\sum_{i=1: i \neq k}^d\left(z_i-z_ku^{-1}x_i\right)\mathbf{b}_i+(z_ku^{-1})\left(u\mathbf{b}_k+\sum_{i=1: i \neq k}^dx_i\mathbf{b}_i\right) \in \Lambda^\prime,
\end{align*}
and so we have shown the lattices are equivalent, as required.
\end{proof}
For a division algebra $K$, and a set $S=\{\mathbf{s}_1,\dots,\mathbf{s}_k\}, \mathbf{s}_i \in K^D$ for some integers $D,k$, for all $1 \leq i \leq D$, define by $\text{Span}_K(S)$ the space of all linear combinations of $S$ over $K$, where multiplication by scalars is performed on the lefthand side of vectors.
\begin{definition}
Let $\Lambda$ be a lattice of dimension $d$ over $\mathcal{O}$, some order of a division algebra $K$, and suppose that $S=\{\mathbf{s}_1,\dots,\mathbf{s}_k\}$ is a set of elements of $\Lambda$ that are linearly independent over $K$, $k \leq d$. We say that $S$ is a \emph{primitive system} of $\Lambda$ if, for all $\mathbf{y} \in \Lambda$, then $\mathbf{y} \in \text{Span}_K(S)$ if and only if $\mathbf{y}=\sum_{i=1}^k x_i \mathbf{s}_i$, where $x_i \in \mathcal{O}$.
\end{definition}
\begin{proposition}\label{primitive}
Let $K$ be a division algebra, and let $\mathcal{O}$ be a right-Euclidean order of $K$, and say that $\Lambda$ is a lattice of dimension $d$ defined over $\mathcal{O}$ with basis $B=\{\mathbf{b}_1,\dots,\mathbf{b}_d\}$. Then for any set of $k$ linearly independent vectors $S=\{\mathbf{s}_1,\dots,\mathbf{s}_k\}$ in $K^D$, $1 \leq k \leq d \leq D$, $S$ is a primitive system of $\Lambda$ if and only if $S$ is extendable to a basis of $\Lambda$.
\end{proposition}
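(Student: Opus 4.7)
My plan is to prove the two implications separately: $(\Leftarrow)$ essentially unpacks definitions, whereas $(\Rightarrow)$ I would establish by induction on $k$, using the right-Euclidean algorithm in the base case and passing to a complementary $(d-1)$-dimensional sublattice in the inductive step. For $(\Leftarrow)$, if $S$ extends to a basis $B' = \{\mathbf{s}_1,\ldots,\mathbf{s}_k,\mathbf{b}'_{k+1},\ldots,\mathbf{b}'_d\}$ of an equivalent lattice, Proposition~\ref{equivalent} makes $B'$ a basis of $\Lambda$; any $\mathbf{y}\in\Lambda$ has a unique $B'$-expansion $\sum_{i\leq k} x_i\mathbf{s}_i + \sum_{j>k} x_j\mathbf{b}'_j$ with $x_i,x_j\in\mathcal{O}$, and if $\mathbf{y}\in\mathrm{Span}_K(S)$ then $K$-linear independence of $B'$ forces $x_j=0$ for $j>k$, giving the required $\mathcal{O}$-combination.

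For the base case $k=1$ of $(\Rightarrow)$, write $\mathbf{s}_1=\sum_j c_j\mathbf{b}_j$ with $c_j\in\mathcal{O}$. Right-Euclideanness of $\mathcal{O}$ makes every right-ideal principal, so $c_1\mathcal{O}+\cdots+c_d\mathcal{O}=g\mathcal{O}$ for some $g\in\mathcal{O}$; write $c_j=gq_j$ and $g=\sum_j c_j u_j$ with $q_j,u_j\in\mathcal{O}$. Setting $\mathbf{v}:=\sum_j q_j\mathbf{b}_j\in\Lambda$ gives $\mathbf{s}_1=g\mathbf{v}$, and primitivity forces $\mathbf{v}=x\mathbf{s}_1=xg\mathbf{v}$ for some $x\in\mathcal{O}$, so $g\in\mathcal{O}^*$ and $\sum_j q_j u_j=1$, i.e.\ $(q_1,\ldots,q_d)$ generates $\mathcal{O}$ as a right-ideal. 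Running the right-Euclidean algorithm on this row via iterated column operations (each a right multiplication by an invertible elementary matrix in $GL_d(\mathcal{O})$, with strict decrease of $\phi$ guaranteeing termination) produces $U\in GL_d(\mathcal{O})$ with $(q_1,\ldots,q_d)U=(1,0,\ldots,0)$; viewed as a change of basis this realises $\mathbf{v}$ as the first element of a basis $\{\mathbf{v},\mathbf{b}'_2,\ldots,\mathbf{b}'_d\}$ of $\Lambda$. Proposition~2 with $u=g\in\mathcal{O}^*$ then substitutes $g\mathbf{v}=\mathbf{s}_1$ for $\mathbf{v}$, extending $\{\mathbf{s}_1\}$ to a basis of $\Lambda$.

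For the inductive step, assume the result for primitive systems of size less than $k$. Any subset of a primitive system is itself primitive (by $K$-linear independence of $\mathbf{s}_1,\ldots,\mathbf{s}_k$), so the base case extends $\{\mathbf{s}_1\}$ to a basis $\{\mathbf{s}_1,\mathbf{b}''_2,\ldots,\mathbf{b}''_d\}$ of $\Lambda$. Expanding $\mathbf{s}_i=a_{i1}\mathbf{s}_1+\sum_{j\geq 2}a_{ij}\mathbf{b}''_j$ for $i\geq 2$, set $\mathbf{s}_i':=\mathbf{s}_i-a_{i1}\mathbf{s}_1\in\Lambda_0:=\bigoplus_{j\geq 2}\mathcal{O}\mathbf{b}''_j$, which is itself a $(d-1)$-dimensional lattice in $K^D$. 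A direct check---given $\mathbf{y}\in\Lambda_0\cap\mathrm{Span}_K(\mathbf{s}_2',\ldots,\mathbf{s}_k')\subseteq\mathrm{Span}_K(S)$, apply primitivity of $S$ in $\Lambda$ to write $\mathbf{y}=\sum_i x_i\mathbf{s}_i$ and match the $\mathbf{s}_1$-coefficient via $K$-independence to obtain $x_1=-\sum_{i\geq 2}x_i a_{i1}\in\mathcal{O}$---shows $\{\mathbf{s}_2',\ldots,\mathbf{s}_k'\}$ is a primitive system of $\Lambda_0$. The induction hypothesis extends it to a basis of $\Lambda_0$, whose union with $\mathbf{s}_1$ is a basis of $\Lambda$; applying Proposition~2 with $u=1$ exactly $k-1$ times then replaces each $\mathbf{s}_i'$ by $\mathbf{s}_i'+a_{i1}\mathbf{s}_1=\mathbf{s}_i$, producing a basis of $\Lambda$ containing $S$.

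The main obstacle is the base case: assembling iterated right-Euclidean column reductions on $(q_1,\ldots,q_d)$ into a single element of $GL_d(\mathcal{O})$. Care is required to track the direction of multiplication---the right-Euclidean identity $a=bq+r$ translates into the column operation $c_j\mapsto c_j+c_i\cdot\alpha$ with $\alpha$ acting on the right---and to confirm termination via strict decrease of the Euclidean function $\phi$.
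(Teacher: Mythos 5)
Your proof is correct and reaches the same conclusion, but both of the interesting halves of the argument take a genuinely different route from the paper. The $(\Leftarrow)$ direction is the same (uniqueness of the $B'$-expansion plus $K$-linear independence) -- you just spell it out where the paper calls it trivial. In the base case $k=1$, the paper never leaves the coefficient vector: it repeatedly picks the coordinate $x_i$ of smallest $\phi$-value, does a single pass of right-Euclidean division to replace the other coordinates by remainders while folding the quotients into a replacement $\mathbf{b}_i^*$ (Proposition~2), and terminates with $\mathbf{s}=m\mathbf{b}'$ and $m\in\mathcal{O}^*$. You instead first invoke that a right-Euclidean order is a right PID to extract a one-sided gcd $g$, show $g\in\mathcal{O}^*$ by primitivity, and then run column reduction on the unimodular row $(q_1,\dots,q_d)$ to assemble an explicit $U\in GL_d(\mathcal{O})$. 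Both work; the paper's version avoids any explicit $GL_d(\mathcal{O})$ bookkeeping, while yours makes the role of the right-ideal structure transparent and would generalise more directly to, say, a Smith-normal-form argument. For the inductive step the two arguments diverge more substantially: the paper keeps the lattice $\Lambda$ fixed, replaces the first $k$ basis vectors by $\mathbf{s}_1,\dots,\mathbf{s}_k$, and applies the same single-vector Euclidean reduction to the tail coefficients of $\mathbf{s}_{k+1}$ to get $\mathbf{s}_{k+1}=\sum_{i\le k}x_i\mathbf{s}_i+m\mathbf{b}_{k+1}^*$ with $m\in\mathcal{O}^*$; you peel off $\mathbf{s}_1$, pass to the complementary $(d-1)$-dimensional free sublattice $\Lambda_0$, verify that the deflations $\mathbf{s}_i'=\mathbf{s}_i-a_{i1}\mathbf{s}_1$ remain a primitive system there, recurse, and then lift the resulting basis back with Proposition~2. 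Your version is cleaner conceptually (a genuine ``quotient and recurse'' scheme) at the cost of having the induction range over lattices of varying dimension, which should be flagged when stating the inductive hypothesis; the paper's induction avoids changing ambient lattices entirely. One small point of taste in your base case: you deduce $\sum_j q_ju_j=1$ from $g=g\sum_j q_ju_j$, which only uses $g\neq 0$ and cancellation in the division algebra, not $g\in\mathcal{O}^*$; keeping those two deductions separate (the unimodularity of the row comes from $g\neq 0$, the unit status of $g$ from primitivity) would make the logic a little cleaner.
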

\begin{proof}
The if statement is trivial: if $S$ is a subset of a basis of an equivalent lattice $\Lambda^\prime$, then by the definition of a lattice over $\mathcal{O}$, the linear span of $S$ over $K$ is an element of the lattice only if the elements taken from $K$ are in $\mathcal{O}$. Now assume that $S$ is a primitive system of $\Lambda$. We prove by induction, and so begin by taking $k=1$. Let $S=\{\mathbf{s}\}$. Since $\mathbf{s}$ is an element of $\Lambda$, we may use the representation
\begin{align*}
    \mathbf{s}=\sum_{i=1}^dx_i\mathbf{b}_i,
\end{align*}
where $x_i \in \mathcal{O}$. Assume that $x_1$ is nonzero, and the smallest nonzero element when ordering the $x_i$ in terms of the Euclidean function $\phi$. If $x_2=x_3=\dots=x_d=0$, then we have
\begin{align*}
    \mathbf{s}=x_1\mathbf{b}_1.
\end{align*}
However, by the definition of a primitive system, this can only be true if we have $x_1 \in \mathcal{O}^*$, the unit group of $\mathcal{O}$. Now assume there is at least one such $x_j$ where $x_j \neq 0, j \neq 1$. Assume each $x_j, j \geq 2$ is such that $\phi(x_1)\leq \phi(x_j)$, except for those of the $x_j=0$. Then, by the definition of a Euclidean ring, we may choose a $q_j$ for each $x_j$ such that either $x_j-x_1q_j=0$ or $\phi(x_j-x_1q_j)<\phi(x_1)$, and so we have
\begin{align*}
    \mathbf{s}= \sum_{i=1}^dx_i\mathbf{b}_i=x_1\left(\mathbf{b}_1+\sum_{i=2}^dq_i\mathbf{b}_i\right)+\sum_{i=2}^d(x_i-x_1q_i)\mathbf{b}_i=x_1\mathbf{b}_1^*+ \sum_{i=2}^dx_i^* \mathbf{b}_i,
\end{align*}
where $\mathbf{b}_1^*=\left(\mathbf{b}_1+\sum_{i=2}^dq_i\mathbf{b}_i\right)$ and $x_i^*=(x_i-x_1q_i)$. Since every $x_i^*$ is such that $\phi(x_i^*)<\phi(x_1)$, and $x_1$ is the smallest nonzero element in terms of the Euclidean function $\phi$, iterating this procedure a finite number of times yields
\begin{align*}
    \mathbf{s}=m\mathbf{b}^\prime,
\end{align*}
where $\mathbf{b}^\prime$ is some lattice vector achieved by invertible operations. By definition of a primitive system, we must have $m \in \mathcal{O}^*$, and as such $S$ is extendable to a basis of $\Lambda$, as we have come by $\mathbf{b}^\prime$ using invertible operations. Now, assume that $S_{k+1}=\{\mathbf{s}_1,\dots,\mathbf{s}_k,\mathbf{s}_{k+1}\}$ is a primitive system, and $S_k=\{\mathbf{s}_1,\dots,\mathbf{s}_k\}$ is extendable to a basis of $\Lambda$. Let $B=\{\mathbf{b}_1,\dots,\mathbf{b}_d\}$ be a basis for $\Lambda$. By proposition \ref{equivalent} and the assumption that $S_k$ is extendable to a basis for $\Lambda$, we may set $\mathbf{b}_1=\mathbf{s}_1,\dots,\mathbf{b}_k=\mathbf{s}_k$. We may use the representation
\begin{align*}
    \mathbf{s}_{k+1}=\sum_{i=1}^kx_i\mathbf{s}_i+\sum_{i=k+1}^dx_i\mathbf{b}_i,
\end{align*}
for some $x_i \in \mathcal{O}$, where at least one of $x_{k+1},\dots,x_d$ is nonzero as the set $S_{k+1}$ is linearly independent over $K$. Using Euclidean division with the coefficients $x_{k+1},\dots,x_d$ as before, we get
\begin{align*}
    \mathbf{s}_{k+1}=\sum_{i=1}^kx_i\mathbf{s}_i+m\mathbf{b}_{k+1}^*.
\end{align*}
We must have $m \in \mathcal{O}^*$, as $\mathbf{b}_{k+1}^*$ is a lattice vector, and if $m \notin \mathcal{O}^*$, then
\begin{align*}
    \mathbf{b}_{k+1}^*=m^{-1}\left(\mathbf{s}_{k+1}-\sum_{i=1}^kx_i\mathbf{s}_i\right) \notin \Lambda,
\end{align*}
which is a contradiction. Therefore, since we have come by the vector $\mathbf{b}_{k+1}^*$ using invertible operations, we have shown the set $S_{k+1}$ is extendable to a basis for $\Lambda$.
\end{proof}
The following corollaries are an immediate consequence of this proposition.
\begin{corollary}\label{extendable}
Let $\Lambda$ be a lattice of dimension $d$ spanned over a right-Euclidean order $\mathcal{O}$ of some division algebra $K$ with basis $B=\{\mathbf{b}_1,\dots,\mathbf{b}_d\}$. Denote by $\mathbf{v}=\sum_{i=1}^dx_i\mathbf{b}_i$ an arbitrary lattice vector such that $x_i \in \mathcal{O}$, $\gcd(x_1,\dots,x_d)=1$. Then there exists a set containing $\mathbf{v}$ that is extendable to a basis for $\Lambda$.
\end{corollary}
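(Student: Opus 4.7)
The plan is to reduce this statement directly to Proposition \ref{primitive} by showing that the singleton $\{\mathbf{v}\}$ is a primitive system of $\Lambda$; once that is established, Proposition \ref{primitive} immediately produces an extension of $\{\mathbf{v}\}$ to a basis. So the real content is to verify that the coprimality condition $\gcd(x_1,\dots,x_d)=1$ on the coordinates is exactly what forces $\{\mathbf{v}\}$ to be primitive.

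First I would unwind the definition: suppose $\mathbf{y}\in \Lambda \cap \text{Span}_K(\{\mathbf{v}\})$, so $\mathbf{y}=\alpha \mathbf{v}$ for some $\alpha \in K$. Writing $\mathbf{y}=\sum_{i=1}^d y_i \mathbf{b}_i$ with $y_i \in \mathcal{O}$ (possible since $\mathbf{y}\in\Lambda$), and expanding $\alpha\mathbf{v}=\sum_{i=1}^d (\alpha x_i)\mathbf{b}_i$ using componentwise left multiplication, the uniqueness of coordinates in the $K$-basis $B$ forces $y_i = \alpha x_i$, hence $\alpha x_i \in \mathcal{O}$ for every $i$. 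What remains is to upgrade this to $\alpha \in \mathcal{O}$; once that is done, $\mathbf{y}=\alpha\mathbf{v}$ is an $\mathcal{O}$-multiple of $\mathbf{v}$ and $\{\mathbf{v}\}$ is primitive by definition.

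The main obstacle, and the only non-trivial step, is the Bézout-type identity. Because $\mathcal{O}$ is right-Euclidean, the right-ideal $\sum_{i=1}^d x_i \mathcal{O}$ is principal and, being the right-ideal generated by elements of right-gcd $1$, equals $\mathcal{O}$ itself. Extracting the element $1$ gives $u_1,\dots,u_d \in \mathcal{O}$ with
\begin{align*}
    x_1 u_1 + x_2 u_2 + \cdots + x_d u_d = 1.
\end{align*}
Multiplying by $\alpha$ on the left and using $\alpha x_i \in \mathcal{O}$ yields $\alpha = \sum_{i=1}^d (\alpha x_i)\, u_i \in \mathcal{O}$, as required. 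Care has to be taken with the direction of multiplication here, since $\mathcal{O}$ is not assumed commutative; the fact that the scalars $u_i$ sit on the right of the $x_i$ is precisely why the right-Euclidean hypothesis, rather than a left-Euclidean one, is what the corollary needs.

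With $\{\mathbf{v}\}$ now known to be a primitive system of $\Lambda$, Proposition \ref{primitive} (applied with $k=1$) yields that $\{\mathbf{v}\}$ is extendable to a basis of $\Lambda$, finishing the proof. The argument generalises painlessly to any set $\{\mathbf{v}_1,\dots,\mathbf{v}_m\}\subseteq \Lambda$ whose coordinate vectors in $B$ form a primitive matrix, but for the stated corollary the one-vector case suffices.
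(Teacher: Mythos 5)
Your proof is correct and takes essentially the same Euclidean-theoretic route that the paper intends, with the merit that it makes explicit the step the paper treats as immediate: you deduce primitivity of $\{\mathbf{v}\}$ from the coprimality condition via the B\'ezout identity $\sum_i x_i u_i = 1$ (which is what the right-Euclidean hypothesis, through principality of the right ideal $\sum_i x_i\mathcal{O}$, delivers), and then invoke Proposition \ref{primitive} to convert primitivity into extendability. The paper's implicit argument instead re-runs the Euclidean reduction from the proof of Proposition \ref{primitive} directly, observing that the final coefficient $m$ in $\mathbf{s} = m\mathbf{b}'$ is a left-gcd of the $x_i$ and hence a unit when $\gcd(x_1,\dots,x_d)=1$; your B\'ezout formulation is the ideal-theoretic mirror of that computation, so the two are equivalent in substance. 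One small terminological caution: the $g$ with $\sum_i x_i\mathcal{O} = g\mathcal{O}$ is a greatest common \emph{left} divisor of the $x_i$, so calling it a ``right-gcd'' invites confusion, though your handedness in the multiplications themselves is exactly right and is indeed why the right-Euclidean (not left-Euclidean) hypothesis is the one needed.
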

\begin{corollary}\label{GCD}
Let $\Lambda$ be a lattice of dimension $d$ spanned over a right-Euclidean order $\mathcal{O}$ of some division algebra $K$, with basis $B=\{\mathbf{b}_1,\dots,\mathbf{b}_d\}$. Denote by
\begin{align*}
    \mathbf{s}_{k}=\sum_{i=1}^dx_i\mathbf{b}_i
\end{align*}
an element of $\Lambda$, so $x_1,\dots, x_d \in \mathcal{O}$. Then, for all $1\leq k \leq d$, the set $S_k=\{\mathbf{b}_1,\dots,\mathbf{b}_{k-1},\mathbf{s}_k\}$ forms a primitive system if and only if at least one of $x_k,\dots,x_d$ is nonzero, and $\gcd(x_k,\dots,x_d)=1$.
\end{corollary}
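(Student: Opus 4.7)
The plan is to reduce the problem to a statement about a single-vector primitive system in an appropriate sublattice and then invoke Corollary \ref{extendable}. First, since $\mathbf{s}_k^\prime \defeq \mathbf{s}_k - \sum_{i=1}^{k-1} x_i \mathbf{b}_i = \sum_{i=k}^d x_i \mathbf{b}_i$ and the subtracted correction lies in the $\mathcal{O}$-span of $\mathbf{b}_1,\dots,\mathbf{b}_{k-1}$, the sets $S_k$ and $S_k^\prime \defeq \{\mathbf{b}_1,\dots,\mathbf{b}_{k-1}, \mathbf{s}_k^\prime\}$ generate the same $\mathcal{O}$-submodule and the same $K$-subspace of $K^D$; hence one is a primitive system of $\Lambda$ if and only if the other is. Next, unpacking the definition shows that $S_k^\prime$ is primitive in $\Lambda$ if and only if the singleton $\{\mathbf{s}_k^\prime\}$ is primitive in the sublattice $\Lambda^{\prime\prime} \defeq \bigoplus_{i=k}^d \mathcal{O}\mathbf{b}_i$: any lattice element $\mathbf{y} = \sum_{i<k} \alpha_i \mathbf{b}_i + \alpha_k \mathbf{s}_k^\prime \in \text{Span}_K(S_k^\prime)$ has $\alpha_i \in \mathcal{O}$ automatically for $i<k$ by uniqueness of representation in $K^D$, leaving exactly the condition $\alpha_k \in \mathcal{O}$, which is precisely primitivity of $\{\mathbf{s}_k^\prime\}$ in $\Lambda^{\prime\prime}$.

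Having reduced to the singleton case, I would prove that $\{\mathbf{s}_k^\prime\}$ is primitive in $\Lambda^{\prime\prime}$ if and only if $\mathbf{s}_k^\prime \neq \mathbf{0}$ and $\gcd(x_k,\dots,x_d)=1$. The ``if'' direction follows at once by applying Corollary \ref{extendable} to $\Lambda^{\prime\prime}$ (basis $\mathbf{b}_k,\dots,\mathbf{b}_d$, vector $\mathbf{s}_k^\prime$) and then Proposition \ref{primitive} to turn extendability back into primitivity. For the ``only if'' direction, suppose $g \defeq \gcd(x_k,\dots,x_d)$ is not a unit. Because $\mathcal{O}$ is right-Euclidean, the Euclidean algorithm yields $g$ as a generator of the right ideal $x_k\mathcal{O}+\dots+x_d\mathcal{O}$, so each $x_i$ factors as $x_i = g u_i$ with $u_i \in \mathcal{O}$. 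Then $\mathbf{y} \defeq \sum_{i=k}^d u_i \mathbf{b}_i$ belongs to $\Lambda^{\prime\prime}$ and, by left-distributivity, satisfies $\mathbf{s}_k^\prime = g\mathbf{y}$; hence $\mathbf{y} = g^{-1}\mathbf{s}_k^\prime \in \text{Span}_K(\mathbf{s}_k^\prime)$ while $g^{-1} \notin \mathcal{O}$, violating primitivity. The condition $\mathbf{s}_k^\prime \neq \mathbf{0}$ (equivalently, some $x_i \neq 0$ for $i \geq k$) is exactly the linear independence of $S_k^\prime$ demanded by the definition of a primitive system.

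The main obstacle I anticipate is keeping the one-sided gcd convention consistent with the left scalar action on lattice vectors. Because scalars act from the left and $\mathcal{O}$ is right-Euclidean, the correct notion of gcd is the generator of the right ideal $\sum_i x_i\mathcal{O}$; this automatically places $g$ on the left in the factorisation $x_i = g u_i$, which is precisely what permits the extraction of $g$ as a left scalar in $\mathbf{s}_k^\prime = g\mathbf{y}$. Any attempt to use a left-ideal gcd instead would place $g$ on the wrong side and the final contradiction would not materialise.
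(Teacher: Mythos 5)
Your proof is correct, and it rigorously fills in what the paper merely labels "an immediate consequence" of Proposition \ref{primitive}: you reduce to the singleton case in the sublattice $\Lambda''=\bigoplus_{i=k}^d\mathcal{O}\mathbf{b}_i$, get the "if" direction from Corollary \ref{extendable} plus Proposition \ref{primitive}, and get the "only if" direction from the left-factorisation $x_i=gu_i$. Your side-convention check is also right: with the paper's right-Euclidean convention $a=bq+r$, the Euclidean algorithm produces the generator of the right ideal $\sum_i x_i\mathcal{O}$ as a common \emph{left} divisor, which is exactly what lets $g$ be pulled out as a left scalar in $\mathbf{s}_k'=g\mathbf{y}$.
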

\section{Minkowski Reduction of Algebraic Lattices}
In order to ascertain more important properties about algebraic lattices we need to define how we measure the lengths of lattice vectors, and what it means for a lattice basis to be reduced with respect to the norm function.
\begin{definition}
Let $\Lambda$ be a lattice over an order $\mathcal{O}$ of some division algebra $K$. A function $N: \Lambda \to \mathbb{R}^+ \cup \{0\}$, $N: \mathcal{O} \to \mathbb{R} \cup \{0\}$ is a \emph{norm} on $\Lambda$ if it satisfies the following properties:
\begin{itemize}
    \item $N(\mathbf{v}+\mathbf{w}) \leq N(\mathbf{v})+ N(\mathbf{w})$, for all $\mathbf{v},\mathbf{w} \in \Lambda$,
    \item $N(a\mathbf{v}) \leq N(a)N(\mathbf{v})$, for all $\mathbf{v} \in \Lambda, a \in \mathcal{O}$,
    \item $N(\mathbf{v})=0 \iff \mathbf{v}$ is the zero vector.
\end{itemize}
\end{definition}
\begin{definition}
Let $\Lambda$ be a $d$-dimensional lattice with basis $B=\{\mathbf{b}_1,\dots,\mathbf{b}_d\}$ over an order $\mathcal{O}$ of some division algebra $K$, and let $N$ be a norm on $\Lambda$. We say that $\Lambda$ is \emph{Minkowski reduced} if $B$ satisfies the following properties:
\begin{itemize}
    \item $\mathbf{b}_1$ is the smallest, nonzero vector with respect to $N$ such that $\{\mathbf{b}_1\}$ is extendable to a basis for $\Lambda$,
    \item For all $1 \leq k \leq d$, $\mathbf{b}_k$ is the shortest nonzero vector with respect to $N$ such that $\{\mathbf{b}_1,\dots,\mathbf{b}_k\}$ is extendable to a basis for $\Lambda$.
\end{itemize}
\end{definition}
\begin{definition}
Let $\Lambda$ be a $d$-dimensional lattice with basis $B=\{\mathbf{b}_1,\dots,\mathbf{b}_d\}$ over an order $\mathcal{O}$ of some division algebra $K$, and let $N$ be a norm on $\Lambda$. Denote by $V=\{\mathbf{v}_1,\dots,\mathbf{v}_d\}$ the elements of $\Lambda$ such that \begin{itemize}
    \item $\text{Span}_K(B)=\text{Span}_K(V)$,
    \item $N(\mathbf{v}_1) \leq N(\mathbf{v}_2) \leq \dots \leq N(\mathbf{v}_d)$,
    \item For every linearly independent set $W=\{\mathbf{w}_1,\dots,\mathbf{w}_d\}$ over $K$, $\mathbf{w}_i \in \Lambda$, we have $N(\mathbf{v}_i) \leq N(\mathbf{w}_i)$, for all $1 \leq i \leq d$.
\end{itemize}
We label $N(\mathbf{v}_1)=\lambda_1,\dots, N(\mathbf{v}_d)=\lambda_d$. Then $\lambda_i$ are referred to as the \emph{$i$th successive minima of $\Lambda$}, for all $1 \leq i \leq d$.
\end{definition}
\begin{theorem}\label{minkowskigcd}
Let $\Lambda$ be a $d$-dimensional lattice with basis $B=\{\mathbf{b}_1,\dots,\mathbf{b}_d\}$ over a right-Euclidean order $\mathcal{O}$ of some division algebra $K$, and let $N$ be a norm on $\Lambda$. Then $\Lambda$ is Minkowski reduced if and only if, for all $1 \leq k \leq d$, $x_1,\dots,x_d \in \mathcal{O}$, the following implications hold:
\begin{align*}
    N\left(\sum_{i=1}^dx_i\mathbf{b}_i\right) \geq N(\mathbf{b}_k) \iff \gcd(x_k,\dots,x_d)=1.
\end{align*}
\end{theorem}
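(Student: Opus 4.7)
The plan is to reduce the theorem to the two auxiliary results already established: Corollary \ref{GCD}, which identifies the gcd condition with the set $\{\mathbf{b}_1,\dots,\mathbf{b}_{k-1},\sum_i x_i\mathbf{b}_i\}$ being a primitive system, and Proposition \ref{primitive}, which identifies primitive systems with sets extendable to a basis of $\Lambda$. Chained together, these say that, for $\mathbf{v}=\sum_{i=1}^d x_i\mathbf{b}_i$, the set $\{\mathbf{b}_1,\dots,\mathbf{b}_{k-1},\mathbf{v}\}$ is extendable to a basis of $\Lambda$ if and only if $\gcd(x_k,\dots,x_d)=1$. This is exactly the bridge between the extendability formulation of Minkowski reduction and the coordinate formulation in the theorem.

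For the forward direction, I would assume $\Lambda$ is Minkowski reduced and fix $k$ together with $x_1,\dots,x_d\in\mathcal{O}$ satisfying $\gcd(x_k,\dots,x_d)=1$. Applying Corollary \ref{GCD} gives that $\{\mathbf{b}_1,\dots,\mathbf{b}_{k-1},\sum_i x_i\mathbf{b}_i\}$ is a primitive system, and then Proposition \ref{primitive} extends it to a basis of $\Lambda$. The defining property of a Minkowski reduced basis at index $k$, namely that $\mathbf{b}_k$ is the shortest vector which, together with $\mathbf{b}_1,\dots,\mathbf{b}_{k-1}$, is extendable to a basis, immediately yields $N(\sum_i x_i\mathbf{b}_i)\geq N(\mathbf{b}_k)$.

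For the reverse direction, I would assume the coordinate condition and verify the two bullets of the Minkowski definition simultaneously. Fix $k$ and any nonzero $\mathbf{v}\in\Lambda$ for which $\{\mathbf{b}_1,\dots,\mathbf{b}_{k-1},\mathbf{v}\}$ is extendable to a basis. Expanding $\mathbf{v}=\sum_i x_i\mathbf{b}_i$ and applying Proposition \ref{primitive} in the other direction gives that this set is a primitive system of $\Lambda$, and then Corollary \ref{GCD} yields $\gcd(x_k,\dots,x_d)=1$. The hypothesis of the theorem then delivers $N(\mathbf{v})\geq N(\mathbf{b}_k)$, showing that $\mathbf{b}_k$ is indeed the shortest such vector. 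Running this for every $k$ establishes Minkowski reduction.

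The main obstacle is really just careful bookkeeping: one must argue that the extension produced by Proposition \ref{primitive} keeps the prefix $\mathbf{b}_1,\dots,\mathbf{b}_{k-1}$ intact (handled by Proposition \ref{equivalent} together with the inductive construction in the proof of Proposition \ref{primitive}), and one must be explicit about the direction of the inner equivalence being invoked at each step, since the characterisation is used in one direction for the forward implication (gcd $\Rightarrow$ primitive system $\Rightarrow$ extendable) and in the other direction for the converse (extendable $\Rightarrow$ primitive system $\Rightarrow$ gcd). No genuinely new norm-theoretic estimate is required; the content of the theorem is the translation between the algebraic primitivity condition and the geometric extendability condition.
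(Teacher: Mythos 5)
Your proposal is essentially the paper's own proof: the paper simply writes ``This follows from Proposition \ref{primitive} and Corollary \ref{GCD}'', and you have spelled out exactly that chain (gcd condition $\leftrightarrow$ primitive system $\leftrightarrow$ extendable to a basis) in both directions, together with the bookkeeping about keeping the prefix $\mathbf{b}_1,\dots,\mathbf{b}_{k-1}$ fixed. One small remark: you (sensibly) only establish the implication $\gcd(x_k,\dots,x_d)=1 \Rightarrow N(\sum_i x_i\mathbf{b}_i)\geq N(\mathbf{b}_k)$ rather than the literal inner biconditional in the theorem's display, which as written cannot hold (e.g.\ $x_k$ a non-unit multiple still gives a long vector); this is a looseness in the paper's phrasing, not a gap in your argument, and the paper's one-line proof suffers the same defect.
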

\begin{proof}
This follows from Proposition \ref{primitive} and Corollary \ref{GCD}.
\end{proof}
\subsection{The quadratic norm}
Let $K$ be a division algebra of degree $m$ over some base field $F$, and suppose that $F$ is of degree $n=r_1+2r_2$ over $\mathbb{Q}$, where $r_1$ is the number of real places and $r_2$ is the number of pairs of complex places. Denote by $\mathbb{H}$ the Hamilton quaternion field over $\mathbb{R}$, and denote $K_{\mathbb{R}}=K \otimes_{\mathbb{Q}} \mathbb{R}$. Denote by $\varphi$ the homomorphism $\varphi: K \to K_\mathbb{R}$. We have
\begin{align*}
    K_\mathbb{R} \cong \mathbb{M}_{\frac{m}{2}}(\mathbb{H}) \times \mathbb{M}_m(\mathbb{R})^{r_1-w} \times \mathbb{M}_m(\mathbb{C})^{r_2},
\end{align*}
where $w$ is the number of real places at which $K$ ramifies. We define by $*$ the canonical involution of $K_{\mathbb{R}}$, which is induced by the canonical involution of the quaternion field on the first factor, the identity map on the second and complex conjugation on the third. It follows that, for any $x,y \in K_\mathbb{R}$, we have $(xy)^*=y^*x^*$. Associate to $K$ the reduced trace function $\text{tr}: K_{\mathbb{R}} \to \mathbb{R}$. Then, we define the following bilinear form, for all $\mathbf{x}=(x_1,\dots,x_D),\mathbf{y}=(y_1,\dots,y_D) \in K^m$:
\begin{align*}
    q_\alpha(\varphi(\mathbf{x}),\varphi(\mathbf{y}))=\text{tr}_{K/\mathbb{Q}}\left(\sum_{i=1}^D\varphi(x_i)\varphi(\alpha) \varphi(y_i)^*\right),
\end{align*}
where $\text{tr}$ denotes the reduced trace induced by the division algebra, and $\alpha \in K$ such that $\text{tr}_{K/\mathbb{Q}}(\varphi(x)\varphi(\alpha)\varphi(x)^*)$ induces a positive-definite quadratic form for all $x \in K$, and $\varphi(\mathbf{x})=(\varphi(x_1),\varphi(x_2),\dots, \varphi(x_D))$. We remark that $q_\alpha$ is positive-definite if and only if $\varphi(\alpha)=\varphi(\alpha)^*$ and $\alpha \neq 0$, and we say that $\alpha$ is \emph{totally positive} if it satisfies this property. We denote by $\mathcal{P}$ the subset of totally positive elements of $K$.  For any $a,b \in K$, by abuse of notation we also let $q_\alpha(a,b)=\text{tr}_{K/\mathbb{Q}}(\varphi(a) \varphi(\alpha) \varphi(b)^*)$. We note that the square root of $q_\alpha$ acts as a norm for any lattice $\Lambda$ of dimension $d \leq D$  over an order $\mathcal{O}$ of $K$ $\alpha \in \mathcal{P}$. For convenience of notation, we will write $q_\alpha(\varphi(\mathbf{x})):=q_\alpha(\varphi(\mathbf{x}),\varphi(\mathbf{x}))$ for any $\mathbf{x} \in K^D$. Moreover, for any $\mathbf{x}=(x_1,\dots,x_D),\mathbf{y}=(y_1,\dots,y_D) \in K^D$ let us denote by $\langle \cdot, \cdot \rangle_\alpha: K_\mathbb{R}^D \times K_{\mathbb{R}}^D \to K_\mathbb{R}$, $\langle \varphi(\mathbf{x}), \varphi(\mathbf{y}) \rangle_\alpha:=\sum_{i=1}^D \varphi(x_i)\varphi(\alpha)\varphi(y_i)^* \in K_\mathbb{R}$. Then for some arbitrary lattice vector $\mathbf{v}=\sum_{i=1}^dx_i\mathbf{b}_i$ for a lattice $\Lambda \subset K^D$, where $\mathbf{b}_i=(b_{1,i},b_{2,i},\dots,b_{D,i})$, $\mathbf{b}_i \in K^D$, we have
\begin{equation}\label{orthogonal}
    q_\alpha(\varphi(\mathbf{v}))=q_\alpha\left(\sum_{i=1}^d\varphi(x_i)\varphi(\mathbf{b}_i)\right)=q_\alpha\left(\sum_{i=1}^d\varphi(x_i)\left(\mathbf{b}_i(i)+\sum_{j=1}^{i-1}\mu_{i,j}\mathbf{b}_j(j)\right)\right),
\end{equation}
where $\mathbf{b}_i(i)=\varphi(\mathbf{b}_i)-\sum_{j=1}^{i-1}\mu_{i,j}\mathbf{b}_j(j)$, $\mathbf{b}_1(1)=\varphi(\mathbf{b}_1)$ and $\mu_{i,j}=(\langle\varphi(\mathbf{b}_i),\mathbf{b}_j(j)\rangle_\alpha)(\langle \mathbf{b}_j(j),\mathbf{b}_j(j) \rangle_\alpha )^{-1} \in K_\mathbb{R}$, for all $1 \leq j <i \leq d$.
\begin{lemma}
For all $1 \leq j < i \leq d$, we have
\begin{align*}
    q_\alpha(\varphi(x_i)\mathbf{b}_i(i),\varphi(x_j)\mathbf{b}_j(j))=0,
\end{align*}
where $x_i,x_j \in K$.
\end{lemma}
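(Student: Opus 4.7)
The plan is to reduce the claim to the orthogonality of the Gram--Schmidt-style vectors $\mathbf{b}_1(1),\dots,\mathbf{b}_d(d)$ under the $K_\mathbb{R}$-valued inner product $\langle\cdot,\cdot\rangle_\alpha$, and then establish that orthogonality by induction on $i$.

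First I would unwind the definition of $q_\alpha$ on the given inputs. Writing $\mathbf{b}_i(i)=(c_1,\dots,c_D)$ and $\mathbf{b}_j(j)=(d_1,\dots,d_D)$ in $K_\mathbb{R}^D$, the natural extension of $q_\alpha$ to $K_\mathbb{R}^D$ and the antiautomorphism property $(xy)^*=y^*x^*$ yield
\begin{align*}
q_\alpha(\varphi(x_i)\mathbf{b}_i(i),\varphi(x_j)\mathbf{b}_j(j)) &= \text{tr}_{K/\mathbb{Q}}\!\left(\sum_{k=1}^D \varphi(x_i)\,c_k\,\varphi(\alpha)\,d_k^*\,\varphi(x_j)^*\right)\\
&= \text{tr}_{K/\mathbb{Q}}\!\left(\varphi(x_i)\,\langle \mathbf{b}_i(i),\mathbf{b}_j(j)\rangle_\alpha\,\varphi(x_j)^*\right).
\end{align*}
Thus it suffices to show that $\langle \mathbf{b}_i(i),\mathbf{b}_j(j)\rangle_\alpha = 0$ in $K_\mathbb{R}$ whenever $j<i$.

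I would prove this by induction on $i$. The case $i=1$ is vacuous. For the inductive step, expand $\mathbf{b}_i(i)$ via its defining relation and use left-linearity of $\langle\cdot,\cdot\rangle_\alpha$ in its first argument (immediate from the definition $\langle \mathbf{x},\mathbf{y}\rangle_\alpha=\sum_k x_k\varphi(\alpha)y_k^*$) to write
\begin{align*}
\langle \mathbf{b}_i(i),\mathbf{b}_j(j)\rangle_\alpha = \langle \varphi(\mathbf{b}_i),\mathbf{b}_j(j)\rangle_\alpha - \sum_{l=1}^{i-1}\mu_{i,l}\,\langle \mathbf{b}_l(l),\mathbf{b}_j(j)\rangle_\alpha.
\end{align*}
By the inductive hypothesis every summand with $l\neq j$ vanishes, leaving
\begin{align*}
\langle \mathbf{b}_i(i),\mathbf{b}_j(j)\rangle_\alpha = \langle \varphi(\mathbf{b}_i),\mathbf{b}_j(j)\rangle_\alpha - \mu_{i,j}\,\langle \mathbf{b}_j(j),\mathbf{b}_j(j)\rangle_\alpha,
\end{align*}
and the explicit formula $\mu_{i,j}=\langle \varphi(\mathbf{b}_i),\mathbf{b}_j(j)\rangle_\alpha\,\langle \mathbf{b}_j(j),\mathbf{b}_j(j)\rangle_\alpha^{-1}$ makes the right-hand side identically zero. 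Substituting back into the trace expression above gives the desired vanishing.

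The main obstacle is bookkeeping in the noncommutative setting: the ``scalars'' $\varphi(x_i)$, $\mu_{i,l}$, and the Gram factors live in $K_\mathbb{R}$, which is generally noncommutative, so one must carefully keep left-multiplication in the first slot and right-multiplication (via $*$) in the second, and apply $(xy)^*=y^*x^*$ in the correct order. In particular, the cancellation in the inductive step depends on $\mu_{i,j}$ being defined precisely as a \emph{right} multiplication by $\langle \mathbf{b}_j(j),\mathbf{b}_j(j)\rangle_\alpha^{-1}$, which is what makes $\mu_{i,j}\langle \mathbf{b}_j(j),\mathbf{b}_j(j)\rangle_\alpha=\langle \varphi(\mathbf{b}_i),\mathbf{b}_j(j)\rangle_\alpha$ hold exactly.
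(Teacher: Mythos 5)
Your proposal follows essentially the same path as the paper: expand one of the Gram--Schmidt vectors via its defining recursion, invoke the definition of $\mu_{i,j}$ to cancel the $l=j$ term, and dispose of the other cross terms by induction. Your version is in fact cleaner in one respect: reducing immediately to showing $\langle \mathbf{b}_i(i),\mathbf{b}_j(j)\rangle_\alpha = 0$ in $K_\mathbb{R}$, and noting that the scalars $\varphi(x_i),\varphi(x_j)^*$ factor out of the trace, is a nicer separation of concerns than the paper's direct manipulation of the trace expression, and you correctly emphasise that the right-multiplication convention in the definition of $\mu_{i,j}$ is what makes the cancellation exact in the noncommutative setting.

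However, there is a gap in the inductive step. You assert that ``by the inductive hypothesis every summand with $l\neq j$ vanishes'' in
\begin{align*}
\langle \mathbf{b}_i(i),\mathbf{b}_j(j)\rangle_\alpha = \langle \varphi(\mathbf{b}_i),\mathbf{b}_j(j)\rangle_\alpha - \sum_{l=1}^{i-1}\mu_{i,l}\,\langle \mathbf{b}_l(l),\mathbf{b}_j(j)\rangle_\alpha.
\end{align*}
For $j < l < i$ this is indeed an instance of the inductive hypothesis, since the first index exceeds the second. But for $l < j$, the term $\langle \mathbf{b}_l(l),\mathbf{b}_j(j)\rangle_\alpha$ has the \emph{smaller} index in the first slot, which is not the form of the statement you are inducting on; the inductive hypothesis as stated says nothing about it. You need one extra observation to close this: since $\alpha\in\mathcal{P}$ means $\varphi(\alpha)=\varphi(\alpha)^*$, a direct computation gives $\langle \mathbf{x},\mathbf{y}\rangle_\alpha^* = \langle \mathbf{y},\mathbf{x}\rangle_\alpha$, so $\langle \mathbf{b}_l(l),\mathbf{b}_j(j)\rangle_\alpha = \langle \mathbf{b}_j(j),\mathbf{b}_l(l)\rangle_\alpha^* = 0^* = 0$ for $l<j<i$ by the inductive hypothesis applied with indices $(j,l)$. (Alternatively, one can strengthen the statement being proved to $\langle \mathbf{b}_i(i),\mathbf{b}_j(j)\rangle_\alpha=0$ for all $i\neq j$ and induct on $\max(i,j)$.) With this one line added, your argument is complete and correct.
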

\begin{proof}
The proof follows closely to that of classical Gram-Schmidt orthogonalisation. First, let's show the claim for $j=1,i=2$. Letting $\mathbf{b}_1=(b_{1,1},b_{2,1},\dots,b_{D,1}), \mathbf{b}_2=(b_{1,2},b_{2,2},\dots, b_{D,2})$:
\begin{align*}
    &q_\alpha(\varphi(x_2)\mathbf{b}_2(2),\varphi(x_1)\mathbf{b}_1(1))=\text{tr}_{K/\mathbb{Q}}\left(\langle \varphi(x_2)\varphi(\mathbf{b}_2),\varphi(x_1)\mathbf{b}_1\rangle_\alpha -\langle \varphi(x_2)\mu_{21}\mathbf{b}_1(1),\varphi(x_1)\mathbf{b}_1(1)\rangle_\alpha\right)
    \\&=\text{tr}_{K/\mathbb{Q}}\left(\sum_{i=1}^D\left(\varphi(x_2)\varphi(b_{i,2})\varphi(\alpha)(\varphi(x_1)\varphi(b_{i,1}))^*-\varphi(x_2)\mu_{21}\varphi(b_{i,1})\varphi(\alpha)(\varphi(x_1)\varphi(b_{i,1}))^*\right)\right)
    \\&=\text{tr}_{K/\mathbb{Q}}\left(\sum_{i=1}^D\varphi(x_2)\varphi(b_{i,2})\varphi(\alpha)\varphi(b_{i,1})^*\varphi(x_1)^*-\varphi(x_2)\mu_{21}\sum_{i=1}^D(\varphi(b_{i,1})\varphi(\alpha)\varphi(b_{i,1})^*)\varphi(x_1)^*\right),
\end{align*}
which is zero, by the definition of $\mu_{21}$. A similar proof follows if we replace $2$ with an arbitrary $i$. Now suppose that $q_\alpha(\varphi(x_i)\mathbf{b}_i(i),\varphi(x_j)\mathbf{b}_j(j))=0$ for all $j  <  k<i-1$. Then for some $x_{k+1} \in K$:
\begin{align*}
    &q_\alpha(\varphi(x_i)\mathbf{b}_i(i),\varphi(x_{k+1})\mathbf{b}_{k+1}(k+1))\\&=q_\alpha(\varphi(x_i)\mathbf{b}_i(i),\varphi(x_{k+1})\varphi(\mathbf{b}_{k+1}))-\sum_{l=1}^{k}q_\alpha(\varphi(x_i)\mathbf{b}_i(i),\varphi(x_{k+1})\mu_{i,l}\mathbf{b}_l(l))\\&=q_\alpha(\varphi(x_i)\mathbf{b}_i(i),\varphi(x_{k+1})\varphi(\mathbf{b}_{k+1})),
\end{align*}
and so the above $=0$ by an identical argument for the case $i=2,j=1$.
\end{proof}
\begin{definition}
Associate to $K$ the reduced norm function $\nrd_{K/\mathbb{Q}}:K_\mathbb{R} \to \mathbb{R}$. Let $\Phi$ be the homomorphism that takes $K \to M_{m \times m}(F)$. Let $\Lambda$ be a lattice spanned over an order $\mathcal{O}$ of $K$ in the space $K^D$ and basis $B=\{\mathbf{b}_1,\dots,\mathbf{b}_d\}$. We define the determinant of $\Lambda$ by
\begin{align*}
    \det_\alpha(\Lambda):=\prod_{i=1}^r \det(\mathcal{F}_i)\prod_{i=r}^{r+s}\det(\mathcal{F}_i)^2,
\end{align*}
where, if $f_i$ denotes the quadratic form generated by $f_i(x_1,\dots,x_d)=\sum_{k=1}^D\varphi(\sigma_i(v_k))\varphi(\sigma_i(\alpha))\varphi(\sigma_i(v_k))^*=\sum_{j,k=1}^dx_jf_i^{j,k}x_k^*$, $\mathbf{v}=\sum_{i=1}^dx_i\mathbf{b}_i=(v_1,\dots,v_D)$, $f_{j,k}=f_{k,j}^*$, where we associate $r$ real automorphisms and $s$ pairs of complex automorphisms $\sigma_i$ to $F$, then $\mathcal{F}_i$ denotes the matrix made up of the submatrices $\Phi(f_i^{j,k})$. Then we define the \emph{additive $(\alpha,\mathcal{O},d)$--Hermite invariant} of an algebraic lattice $\Lambda$ of dimension $d$ over an order $\mathcal{O}$ by
\begin{align*}
    \gamma_{\alpha,\mathcal{O},d}(\Lambda)=\frac{\min_{\mathbf{v} \in \Lambda \setminus \{\mathbf{0}\}}q_\alpha(\varphi(\mathbf{v}))}{\det_\alpha(\Lambda)^{\frac{1}{dnm^2}}}.
\end{align*}
\end{definition}
Since $q_\alpha$ induces a positive-definite quadratic form, the value of the additive $(\alpha,\mathcal{O},d)$--Hermite invariant is bounded for every lattice $\Lambda$. We call $\gamma_{\alpha,\mathcal{O},d}=\max_{\Lambda} \gamma_{\alpha,\mathcal{O},d}(\Lambda)$ the \emph{additive $(\alpha,\mathcal{O},d)$--Hermite constant}. The following theorem can be proven identically to the case in \cite{leibak}.
\begin{theorem}
Denote by $\gamma_k$ the real Hermite constant in dimension $k$. Then for all positive $\alpha \in \mathcal{P}$,
\begin{align*}
    \gamma_{\alpha,\mathcal{O},d} \leq \gamma_{dnm^2}\text{disc}(\mathcal{O}/\mathbb{Z})^{\frac{1}{nm^2}},
\end{align*}
where $\text{disc}(\mathcal{O}/\mathbb{Z})$ denotes the discriminant of $\mathcal{O}$ over $\mathbb{Z}$.
\end{theorem}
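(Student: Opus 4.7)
The plan is to reduce the bound on the additive Hermite constant to the classical real Hermite bound by embedding the algebraic lattice as a full-rank real lattice in $\mathbb{R}^{dnm^2}$ via $\varphi$ and comparing volumes. Since $q_\alpha$ is a positive-definite quadratic form on $K_{\mathbb{R}}^D$, its restriction to the real span of $\varphi(\Lambda)$ is a positive-definite quadratic form on a real vector space of dimension $dnm^2$, so after an orthonormal change of coordinates it becomes $\|\cdot\|^2$ and $\varphi(\Lambda)$ becomes a Euclidean lattice in the classical sense.

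The first key step is to identify $\text{vol}(\varphi(\Lambda))^2$ with the determinant of the Gram matrix of $q_\alpha$ computed on a $\mathbb{Z}$-basis of $\Lambda$. A natural $\mathbb{Z}$-basis is obtained by combining the $\mathcal{O}$-basis $\{\mathbf{b}_1,\dots,\mathbf{b}_d\}$ with any $\mathbb{Z}$-basis of $\mathcal{O}$. Expanding the trace pairing block-by-block, the $d\times d$ block coming from a fixed pair $(\mathbf{b}_i,\mathbf{b}_j)$ is precisely the reduced trace pairing of $\mathcal{O}$ against itself scaled by $\langle \varphi(\mathbf{b}_i),\varphi(\mathbf{b}_j)\rangle_\alpha$, and one can separate the contribution coming from the inner Gram matrices $\mathcal{F}_i$ (which assemble into $\det_\alpha(\Lambda)$) from the contribution of the trace form of $\mathcal{O}$ over $\mathbb{Z}$ (which is $\text{disc}(\mathcal{O}/\mathbb{Z})$), once per $\mathcal{O}$-basis vector. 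The target identity is therefore
\begin{align*}
    \text{vol}(\varphi(\Lambda))^2 \;=\; \det_\alpha(\Lambda)\cdot \text{disc}(\mathcal{O}/\mathbb{Z})^d.
\end{align*}

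With this volume identity in hand, I would invoke the classical Hermite inequality in dimension $dnm^2$ for the Euclidean lattice $\varphi(\Lambda)$: there exists $\mathbf{v}\in\Lambda\setminus\{\mathbf{0}\}$ with $q_\alpha(\varphi(\mathbf{v}))\le \gamma_{dnm^2}\cdot \text{vol}(\varphi(\Lambda))^{2/(dnm^2)}$. Substituting the volume identity and dividing by $\det_\alpha(\Lambda)^{1/(dnm^2)}$ gives
\begin{align*}
    \gamma_{\alpha,\mathcal{O},d}(\Lambda)\;\le\;\gamma_{dnm^2}\cdot \text{disc}(\mathcal{O}/\mathbb{Z})^{1/(nm^2)},
\end{align*}
and taking the supremum over $\Lambda$ yields the theorem.

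The main obstacle is the volume computation, and in particular bookkeeping the factor $\text{disc}(\mathcal{O}/\mathbb{Z})^d$. One has to be careful that the $\mathcal{F}_i$ are defined using an $\mathcal{O}$-basis (so the ``ambient'' bilinear form they encode is the one on $K^D$ with respect to $\alpha$, not yet the integer-valued trace form on $\mathcal{O}$), so that expanding to a $\mathbb{Z}$-basis of $\Lambda$ genuinely introduces exactly $d$ copies of the discriminant. Once this is set up correctly by mimicking the block-matrix determinant computation in \cite{leibak}, the remainder of the argument is a direct application of Hermite's inequality.
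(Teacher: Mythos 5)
The paper itself does not supply a proof; it simply defers to Leibak's argument, which is exactly the embedding-plus-volume-comparison strategy you outline. Your sketch is the standard route and it matches what the paper has in mind: realize $\varphi(\Lambda)$ as a full-rank Euclidean lattice of rank $dnm^2$, relate its covolume to $\det_\alpha(\Lambda)$ and $\text{disc}(\mathcal{O}/\mathbb{Z})^d$, apply the classical Hermite inequality, and divide. One sanity check confirms the exponents: scaling $\mathbf{b}_i \mapsto t\mathbf{b}_i$ scales both $\text{vol}(\varphi(\Lambda))^2$ and $\det_\alpha(\Lambda)$ by $t^{2dnm^2}$ (the latter is visible in the paper's use of $\det_\alpha(\Lambda') = \det_\alpha(\Lambda)\prod\lambda_i^{-2nm^2}$ in the theorem immediately following), so the identity $\text{vol}(\varphi(\Lambda))^2 = \det_\alpha(\Lambda)\,\text{disc}(\mathcal{O}/\mathbb{Z})^d$ is at least dimensionally consistent and yields the stated exponent $1/(nm^2)$ on the discriminant after taking the $1/(dnm^2)$ root. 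The one piece you leave unverified is precisely that covolume identity, and that is genuinely where the work lies (one has to track the block structure of the $\mathcal{F}_i$ across the $r$ real and $s$ complex places and separate off one trace-form Gram determinant of $\mathcal{O}$ per $\mathcal{O}$-basis vector); it would be worth carrying this out explicitly, since a missed normalization there would silently change the constant, but the structure of your argument is correct and is the same as the cited proof.
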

\begin{theorem}
Let $\Lambda \subset K^D$ be a lattice of dimension $d \leq D$ spanned over the order $\mathcal{O}$, and let $q_\alpha$ be a quadratic norm defined by some $\alpha \in \mathcal{P}$. Denote by $\lambda_i$ the $i$th successive minima, $1 \leq i \leq d$, with respect to the norm $\sqrt{q_\alpha(\varphi(\mathbf{v}))}$, for all $\mathbf{v} \in K^D$. Then
\begin{align*}
    \prod_{i=1}^d\lambda_i^{2} \leq \gamma_{\alpha,\mathcal{O},d}^d\det_\alpha(\Lambda)^{\frac{1}{nm^2}}.
\end{align*}
\end{theorem}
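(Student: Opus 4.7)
The plan is to adapt the classical proof of Minkowski's second theorem to this algebraic setting, combining the Hermite-type bound of the preceding theorem with a Gram--Schmidt rescaling argument. First, I would select $K$-linearly independent lattice vectors $\mathbf{v}_1,\ldots,\mathbf{v}_d\in\Lambda$ achieving the successive minima, so that $q_\alpha(\varphi(\mathbf{v}_i))=\lambda_i^2$. Applying the orthogonalisation from the preceding lemma to the $\varphi(\mathbf{v}_i)$ produces pairwise $q_\alpha$-orthogonal Gram--Schmidt components $\mathbf{v}_i(i)\in K_\mathbb{R}^D$ with $q_\alpha(\mathbf{v}_i(i))\leq\lambda_i^2$, since $q_\alpha(\varphi(\mathbf{v}_i))=q_\alpha(\mathbf{v}_i(i))+\sum_{j<i}q_\alpha(\mu_{i,j}\mathbf{v}_j(j))$ and each summand is non-negative.

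Next, I would define a $K_\mathbb{R}$-linear endomorphism $T$ on the $K_\mathbb{R}$-span of $\{\mathbf{v}_1(1),\ldots,\mathbf{v}_d(d)\}$ by $T\mathbf{v}_i(i)=(\lambda_d/\lambda_i)\mathbf{v}_i(i)$. Because the scalars $\lambda_d/\lambda_i$ are real and therefore central in $K_\mathbb{R}$, the map $T$ commutes with the left $\mathcal{O}$-action, so $T(\Lambda)$ is a $d$-dimensional $\mathcal{O}$-lattice in $K_\mathbb{R}^D$. The crucial claim is that every nonzero $\mathbf{w}\in\Lambda$ satisfies $q_\alpha(T\mathbf{w})\geq\lambda_d^2$: expanding $\mathbf{w}=\sum_i d_i\mathbf{v}_i(i)$ in the Gram--Schmidt basis and letting $k$ be the largest index with $d_k\neq 0$, the set $\{\mathbf{v}_1,\ldots,\mathbf{v}_{k-1},\mathbf{w}\}$ is $K$-linearly independent, so a standard exchange argument applied to the successive-minima vectors yields $q_\alpha(\mathbf{w})\geq\lambda_k^2$; combining with $q_\alpha$-orthogonality and the monotonicity $(\lambda_d/\lambda_i)^2\geq(\lambda_d/\lambda_k)^2$ for $i\leq k$ gives
\begin{equation*}
q_\alpha(T\mathbf{w})=\sum_{i\leq k}(\lambda_d/\lambda_i)^2\,q_\alpha(d_i\mathbf{v}_i(i))\geq(\lambda_d/\lambda_k)^2\,q_\alpha(\mathbf{w})\geq\lambda_d^2.
\end{equation*}

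Applying the preceding Hermite bound to $T(\Lambda)$ then gives $\lambda_d^2\leq\gamma_{\alpha,\mathcal{O},d}\det_\alpha(T(\Lambda))^{1/(dnm^2)}$. Since $T$ scales each of the $d$ orthogonal $K_\mathbb{R}$-$1$-dimensional Gram--Schmidt subspaces (each of real dimension $nm^2$) by $\lambda_d/\lambda_i$, the Gram-type determinant transforms as $\det_\alpha(T(\Lambda))=\bigl(\lambda_d^d/\prod_i\lambda_i\bigr)^{2nm^2}\det_\alpha(\Lambda)$; substituting, cancelling the factor of $\lambda_d^2$ on both sides after raising to the $d$-th power, and rearranging yields the desired inequality $\prod_{i=1}^d\lambda_i^2\leq\gamma_{\alpha,\mathcal{O},d}^d\det_\alpha(\Lambda)^{1/(nm^2)}$.

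The main obstacle will be to rigorously justify that the Hermite bound applies to $T(\Lambda)$, which lies in $K_\mathbb{R}^D$ rather than $K^D$; this should follow from the observation that $T$ preserves the $\mathcal{O}$-module structure and that the preceding theorem's proof is purely geometric, depending only on the $\mathcal{O}$-lattice structure and the quadratic form $q_\alpha$, so it extends verbatim to $\mathcal{O}$-lattices in $K_\mathbb{R}^D$. A secondary point is verifying the bound $q_\alpha(\mathbf{w})\geq\lambda_k^2$ used above, which requires a standard exchange argument to complete $\{\mathbf{v}_1,\ldots,\mathbf{v}_{k-1},\mathbf{w}\}$ to a $d$-linearly independent subset of $\Lambda$ using the $\mathbf{v}_j$ for $j\geq k$ and applying the defining minimality of $\lambda_k$ to the completed set.
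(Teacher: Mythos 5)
Your proof is essentially the same as the paper's. You define a $K_\mathbb{R}$-linear scaling map $T$ that dilates the $i$th Gram--Schmidt direction by $\lambda_d/\lambda_i$, whereas the paper directly writes down the rescaled quadratic form $\sum_i\lambda_i^{-2}f_i^2$ and calls its Gram lattice $\Lambda'$; these are the same construction up to a global factor of $\lambda_d$, and the core lemma (that the rescaled lattice has first minimum bounded below, via the observation that if the highest nonzero Gram--Schmidt coordinate of $\mathbf{w}$ is $k$ then $\mathbf{w}$ is independent of $\mathbf{s}_1,\dots,\mathbf{s}_{k-1}$ and hence $q_\alpha(\mathbf{w})\geq\lambda_k^2$), the determinant transformation, and the final appeal to $\gamma_{\alpha,\mathcal{O},d}$ are all identical. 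You are in fact slightly more careful than the paper in flagging that $T(\Lambda)$ lives in $K_\mathbb{R}^D$ rather than $K^D$, a subtlety the paper glosses over in passing from $\Lambda$ to $\Lambda'$.
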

\begin{proof}
Let $\mathbf{s}_1,\dots,\mathbf{s}_d$, $\mathbf{s}_i \in K^D$ be the vectors such that $q_{\alpha}(\varphi(\mathbf{s}_i))=\lambda_i^2$ for all $1 \leq i \leq d$. By the definition of the successive minima of the lattice, $\mathbf{s}_1,\dots,\mathbf{s}_d$ are linearly independent over $K$, and so every lattice point can be represented by
\begin{align*}
    \mathbf{v}=\sum_{i=1}^d x_i\mathbf{s}_i,
\end{align*}
for some $x_i \in K$. Using the method described in (\ref{orthogonal}), we may decompose $q_\alpha(\varphi(\mathbf{v}))$ into the sum of squares:
\begin{align*}
    q_\alpha(\varphi(\mathbf{v}))=f_1(x_1,x_2,\dots,x_d)^2+f_2(x_2,x_3,\dots,x_d)^2+\dots +f(x_d)^2,
\end{align*}
where $f_i: K \to \mathbb{R}$. Now, let us consider an alternative lattice $\Lambda^\prime$, whose vector lengths generate the quadratic form
\begin{align*}
    q_\alpha(\mathbf{v}^\prime)=\lambda_1^{-2}f_1(x_1,x_2,\dots,x_d)^2+\lambda_2^{-2}f_2(x_2,x_3,\dots,x_d)^2+\dots +\lambda_d^{-2}f(x_d)^2,
\end{align*}
for some $\mathbf{v}^\prime \in \Lambda^\prime$. We claim that every nonzero element of $\Lambda^\prime$ has norm $ \geq 1$. Suppose that $f_k(x_k,\dots,x_d)$ is the first nonzero value in $f_1,f_2,\dots,f_d$, counting backwards from $d$. We must have that $\mathbf{v}$ is linearly independent of the vectors $\mathbf{s}_1,\dots,\mathbf{s}_{k-1}$, as otherwise, by definition we must have that $x_k=x_{k+1}=\dots=x_d=0$, and hence $f_k(x_k,\dots,x_d)=f_{k+1}(x_{k+1},\dots,x_d)=\dots=f_d(x_d)=0$, which is a contradiction. It therefore holds that $\sum_{i=1}^k f(x_i,\dots,x_d) \geq \lambda_k^2$, and so
\begin{align*}
    \sum_{i=1}^k\lambda_i^{-2}f(x_i,\dots,x_d) \geq \lambda_k^{-2}\sum_{i=1}^kf(x_i,\dots,x_d) \geq 1.
\end{align*}
Since $\det_{\alpha}(\Lambda^\prime)=\det_\alpha(\Lambda)\prod_{i=1}^d\lambda_i^{-2nm^2}$, using the fact that the shortest nonzero vector in $\Lambda^\prime$ has norm $1$, we get
\begin{align*}
    \gamma_{\alpha,\mathcal{O},d}^d\det_\alpha(\Lambda^\prime)^{\frac{1}{nm^2}} \geq 1 \iff \gamma_{\alpha,\mathcal{O},d}^d\det_\alpha(\Lambda)^{\frac{1}{nm^2}} \geq \prod_{i=1}^d\lambda_i^{2},
\end{align*}
which proves the result.
\end{proof}
\begin{lemma}[\cite{cdaminimum}, Corollary 3.9]
Let $K$ be a division algebra of degree $m$ over some base number field $F$, and let $[F:\mathbb{Q}]=n$. Denote by $\mathcal{O}$ some order of $F$ with discriminant $\text{disc}(\mathcal{O})$ over $\mathbb{Z}$. Then for any $x \in K_\mathbb{R}$, $\alpha \in \mathcal{P}$, there exists a $y \in \mathcal{O}$ such that
\begin{align*}
    \max_{x \in K}\min_{y \in \mathcal{O}}q_\alpha(x-\varphi(y)) \leq \frac{nm^3}{4}\text{disc}(\mathcal{O}/\mathbb{Z})^{\frac{2}{nm^2}}\text{nrd}_{K/\mathbb{Q}}(\alpha)^{\frac{1}{nm}}.
\end{align*}
\end{lemma}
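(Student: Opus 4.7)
My plan is to treat the statement geometrically: the image $\varphi(\mathcal{O})$ is a full-rank $\mathbb{Z}$-lattice in the real vector space $K_\mathbb{R}$, which has real dimension $N\defeq nm^2$, and since $\alpha \in \mathcal{P}$ the form $q_\alpha$ endows $K_\mathbb{R}$ with a Euclidean inner product. Under this identification, $\max_{x}\min_{y}q_\alpha(x-\varphi(y))$ is precisely the square of the covering radius of the Euclidean lattice $\varphi(\mathcal{O})$ with respect to $q_\alpha$. The task reduces to producing an explicit upper bound for this covering radius and then matching the covolume of $\varphi(\mathcal{O})$ to the expression in $\text{disc}(\mathcal{O}/\mathbb{Z})$ and $\nrd_{K/\mathbb{Q}}(\alpha)$ on the right-hand side.

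The first step is a computation of the covolume. Fix a $\mathbb{Z}$-basis $e_1,\dots,e_N$ of $\mathcal{O}$ and let $G_\alpha\defeq (q_\alpha(e_i,e_j))_{i,j}$. Since $q_\alpha(e_i,e_j)=\text{tr}_{K/\mathbb{Q}}(\varphi(e_i)\varphi(\alpha)\varphi(e_j)^*)$, one factors $G_\alpha$ as the product of the trace-form Gram matrix (whose determinant is $\text{disc}(\mathcal{O}/\mathbb{Z})$) and the matrix of right-multiplication by $\varphi(\alpha)$ on $K_\mathbb{R}$. The latter has determinant $\nrd_{K/\mathbb{Q}}(\alpha)^m$ via the regular representation of the central simple algebra $K$, so $\det(G_\alpha)$ becomes an explicit monomial in the two quantities appearing on the right-hand side of the lemma.

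The second step is Babai's nearest-plane (rounding-off) procedure. If $e_1^*,\dots,e_N^*$ denote the Gram--Schmidt orthogonalisation of $e_1,\dots,e_N$ with respect to $q_\alpha$, then for any $x\in K_\mathbb{R}$ there is a $y\in\mathcal{O}$ with
\[
q_\alpha(x-\varphi(y)) \;\leq\; \tfrac{1}{4}\sum_{i=1}^N q_\alpha(e_i^*).
\]
To make the right-hand side small, I would replace the initial $e_i$ by a Minkowski-reduced basis of $\varphi(\mathcal{O})$ with respect to $q_\alpha$, for which $q_\alpha(e_i^*)\leq q_\alpha(e_i)$ is controlled by the successive minima $\lambda_i^2$. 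Minkowski's second theorem then gives a product bound $\prod_i\lambda_i^2\leq\gamma_N^N\det(G_\alpha)$, and a reduction-theoretic argument (bounding $\lambda_N^2$ in terms of $(\det G_\alpha)^{1/N}$ up to a constant) transforms this product bound into the required sum bound on $\sum q_\alpha(e_i^*)$.

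The main obstacle is precisely this last step: converting a \emph{product} inequality on successive minima into a \emph{uniform} per-coordinate bound on each $q_\alpha(e_i^*)$, and tracking the constants sharply enough to recover exactly $nm^3/4=Nm/4$. The factor $N/4$ is the standard dimensional loss in Babai rounding; the additional factor of $m$ arises from two sources, namely the appearance of $\nrd_{K/\mathbb{Q}}(\alpha)^m$ in $\det(G_\alpha)$, which upon taking the $1/N$-th power produces the exponent $1/(nm)$ on $\nrd_{K/\mathbb{Q}}(\alpha)$, and the slack incurred in bounding $\max_i q_\alpha(e_i^*)$ by a multiple of $(\det G_\alpha)^{1/N}$, which accounts for the exponent $2/(nm^2)$ on $\text{disc}(\mathcal{O}/\mathbb{Z})$. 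Making these constants fully explicit — rather than merely $O_N(1)$ — is the delicate part, and the cleanest route likely goes through a Minkowski-style packing argument applied directly to $\varphi(\mathcal{O})$ rather than through general lattice-reduction inequalities.
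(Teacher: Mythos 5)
Your proposal correctly identifies the problem as a covering-radius estimate for the Euclidean lattice $\varphi(\mathcal{O}) \subset K_\mathbb{R}$ under $q_\alpha$, and the covolume computation in your first step is sound. The gap, however, is not a matter of ``tracking constants sharply'' as you suggest --- it is a genuine impossibility at the level of generality you are working at. You propose to control $\sum_i q_\alpha(e_i^*)$ via Minkowski's second theorem and then ``transform this product bound into the required sum bound'' by showing each $\lambda_i^2$ is $O_N(1)\cdot(\det G_\alpha)^{1/N}$. No such bound exists for a general rank-$N$ lattice: the ratio $\lambda_N^2 / (\det L)^{2/N}$, and consequently the covering radius over $(\det L)^{1/N}$, is unbounded over all lattices; consider $\mathbb{Z}\oplus\varepsilon\mathbb{Z}$ with $\varepsilon\to 0$. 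Minkowski's second theorem constrains only the product $\prod_i \lambda_i^2$ and says nothing about the individual terms, so Babai rounding on a Minkowski- or Hermite-reduced basis cannot yield the stated inequality by lattice-reduction considerations alone. Your final paragraph already hedges in this direction, but frames the issue as one of cleanliness rather than of a missing structural input.

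The missing input is the ring structure of $\mathcal{O}$ and the specific algebraic form of $q_\alpha$, neither of which your outline uses beyond the covolume computation. This lemma is imported verbatim from \cite{cdaminimum} (Corollary 3.9) and not reproved in this paper, so there is no internal proof to compare against; but the argument in the cited source does not pass through a generic covering-radius estimate. It first bounds the \emph{reduced norm} $\text{nrd}_{K/\mathbb{Q}}(x-\varphi(y))$ of the remainder using the Euclidean-minimum machinery for orders --- this is where the discriminant enters, and where the exponent $2/(nm^2)$ on $\text{disc}(\mathcal{O}/\mathbb{Z})$ (note: $2/N$, not the $1/N$ your determinant-to-the-$1/N$ heuristic would predict) arises --- and only then passes from the reduced-norm form to the reduced-trace form $q_\alpha$ by the AM--GM inequality between them, which this paper later records as ``\cite{cdaminimum}, Proposition 3.1'', namely $\text{nrd}_{K/\mathbb{Q}}(\langle \mathbf{b}_i(i),\mathbf{b}_i(i)\rangle_\alpha) \leq (q_\alpha(\mathbf{b}_i(i))/(nm))^{nm}$, read in the opposite direction. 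That AM--GM step between norm and trace, not the appearance of $\text{nrd}_{K/\mathbb{Q}}(\alpha)^m$ in $\det G_\alpha$ as you speculate, is the true source of the extra factor of $m$ in the constant $nm^3/4$. As written, your plan cannot close.
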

From now on, in order to keep our notation concise, we will use the symbol $\rho_{\alpha,\mathcal{O}}$ to denote the quantity $\frac{nm^3}{4}\text{disc}(\mathcal{O}/\mathbb{Z})^{\frac{2}{nm^2}}\text{nrd}_{K/\mathbb{Q}}(\alpha)^{\frac{1}{nm}}$. Finally, we give the following definition:
\begin{definition}
Let $\mathcal{O}^\times$ denote the unit group of $\mathcal{O}$. We say that the space $K^D$ is \emph{left unit reducible} (respectively \emph{right unit reducible} with respect to a norm $N$ if, for any $\mathbf{v} \in K^D$, the following implications hold:
\begin{align*}
    N(\mathbf{v}) \leq N(u\mathbf{v}), \hspace{1mm} \forall u \in \mathcal{O}^\times  \iff N(\mathbf{v}) \leq N(q\mathbf{v}), \hspace{1mm} \forall q \in \mathcal{O} \setminus \{0\},
\end{align*}
(respectively right-hand multiplication for right unit reducible spaces). We say the space is unit reducible if the space is both left- and right- unit reducible.
\end{definition}
It is not currently clear which fields admit the unit reducible property and which do not, although we suspect that the value of the regulator of the number field would give a good indication of which fields are unit reducible or not. The property certainly holds for certain cases, and one can find counterexamples for certain fields. Clearly, the rational numbers $\mathbb{Q}$ and any imaginary quadratic or rational quaternion field admits a unit reducible space for any $D \geq 1$. We give a few examples of unit reducible spaces, and one counterexample.
\begin{proposition}\label{unitreduciblefields}
Let $N$ denote the quadratic norm $q_\alpha$ for any totally positive $\alpha \in K$. For any integer $D \geq 1$, the space $K^D$ is unit reducible if 
\begin{align*}
    K=\mathbb{Q}(\sqrt{2}),\mathbb{Q}(\sqrt{3}),\mathbb{Q}(\sqrt{5}),\mathbb{Q}(\zeta_8),\mathbb{Q}(\zeta_{12}).
\end{align*}
The space $K^D$ is not unit reducible if $K=\mathbb{Q}(\sqrt{6})$.
\end{proposition}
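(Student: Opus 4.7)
My plan is to reduce unit reducibility to a one-variable arithmetic inequality along the unit group, then verify the six fields directly. Since every $K$ in the statement is commutative, the canonical involution on $K_\mathbb{R}$ acts as the identity (real quadratic) or as complex conjugation (CM), so $\varphi(q)\varphi(q)^* = |\varphi(q)|^2$ componentwise and
\[
q_\alpha(\varphi(q\mathbf{v})) = \text{tr}_{K/\mathbb{Q}}\!\bigl(\varphi(q)\varphi(q)^*\varphi(\alpha){\textstyle\sum}_i\varphi(v_i)\varphi(v_i)^*\bigr) = \sum_j c_j |\sigma_j(q)|^2,
\]
with one summand per archimedean place of $K$ and $c_j > 0$ depending only on $\alpha$ and $\mathbf{v}$. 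Allowing $\alpha \in \mathcal{P}$ and $\mathbf{v} \in K^D$ to vary (any $D \geq 1$ suffices) lets $(c_j)$ range over all of $\mathbb{R}_{>0}^{r_1+r_2}$, so $K^D$ is unit reducible iff for every positive weight vector the minimum of $\sum_j c_j |\sigma_j(q)|^2$ over $q \in \mathcal{O} \setminus \{0\}$ is attained by a unit.

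For each positive case the unit group has $\mathbb{Z}$-rank one. For the real quadratic fields I take the standard fundamental unit; for $\mathbb{Q}(\zeta_8)$ I take $\epsilon = 1+\sqrt{2}$, since the free part of $\mathcal{O}_K^\times$ modulo torsion lifts from the maximal real subfield ($8$ being a prime power); and for $\mathbb{Q}(\zeta_{12})$ I note that $1-\zeta_{12}$ is a unit (as $\Phi_{12}(1)=1$) whose inverse has square equal to $2+\sqrt{3}$ up to a root of unity, so I take $\epsilon = (1-\zeta_{12})^{-1}$. Setting $\eta := |\sigma_1(\epsilon)|^2$, the discrete minimum of $k \mapsto c_1 \eta^k + c_2 \eta^{-k}$ over $k \in \mathbb{Z}$ is at most $\sqrt{c_1 c_2}(\eta^{1/2}+\eta^{-1/2})$ (worst case attained when the continuous optimum lies log-midway between consecutive integers), while AM-GM together with $|\nrd_{K/\mathbb{Q}}(q)| \geq N_{\min}$ for non-units gives $\sum_j c_j |\sigma_j(q)|^2 \geq 2\sqrt{c_1 c_2}\,N_{\min}^{1/s}$, with $s=1$ in the real quadratic case and $s=2$ in the CM case. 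Unit reducibility thus follows from $\eta^{1/2}+\eta^{-1/2} \leq 2 N_{\min}^{1/s}$, which I verify case by case: $\mathbb{Q}(\sqrt{2})$, $2\sqrt{2} \leq 4$ with $N_{\min}=2$ via $\sqrt{2}$; $\mathbb{Q}(\sqrt{3})$, $4 \leq 4$ with $N_{\min}=2$ via $1+\sqrt{3}$; $\mathbb{Q}(\sqrt{5})$, $\sqrt{5} \leq 8$ with $N_{\min}=4$ since $2,3$ are inert; $\mathbb{Q}(\zeta_8)$, $2\sqrt{2} \leq 2\sqrt{2}$ with $N_{\min}=2$ via $\Phi_8(1)=2$; and $\mathbb{Q}(\zeta_{12})$, $\sqrt{6} \leq 4$ with $N_{\min}=4$ via the ramification of $2,3$ with residue degree two.

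For $K = \mathbb{Q}(\sqrt{6})$ the inequality fails: $\epsilon = 5+2\sqrt{6}$ gives $\eta^{1/2}+\eta^{-1/2}=10$ while $N_{\min}=2$ (via $2+\sqrt{6}$ of norm $-2$), so $10 > 4$. I would furnish an explicit counterexample by taking $D=1$, $\mathbf{v}=1$, and $\alpha = 5-2\sqrt{6} \in \mathcal{P}$, giving weights $c_1 = 5-2\sqrt{6}$, $c_2 = 5+2\sqrt{6}$: direct computation yields $q_\alpha(\varphi((2+\sqrt{6})\mathbf{v}))=4$, whereas $q_\alpha(\varphi(\epsilon^k \mathbf{v}))=10$ for $k \in \{-1,0,1\}$ and is strictly greater for $|k| \geq 2$, so the unit minimum is $10$. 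Hence $\mathbf{v}=1$ is unit-minimal yet not nonzero-minimal, defeating unit reducibility. The main obstacle is the two borderline cases $\mathbb{Q}(\sqrt{3})$ and $\mathbb{Q}(\zeta_8)$, where the sufficient inequality holds only with equality: both the unit upper bound and the non-unit lower bound are simultaneously tight at the critical weight $\sqrt{c_2/c_1}=\eta^{1/2}$, so I must verify directly that the unit minimum and the non-unit minimum coincide at this configuration rather than strictly separate, preserving the essential inequality unit-min $\leq$ non-unit-min.
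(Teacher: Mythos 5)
Your approach is correct and genuinely different from the paper's. The paper proceeds field by field, in most cases exhibiting an explicit \emph{dominating} unit $u^n$ (one for which $|\sigma_j(u^n)| \leq |\sigma_j(x)|$ at \emph{every} archimedean place), which then forces $\trace(u^n v (u^n)^*) \leq \trace(xvx^*)$ for all totally positive $v$; for the exceptional $\mathbb{Q}(\zeta_8)$ case (elements of norm $2$), the paper instead shows $2\trace((1\pm\zeta_8)v(1\pm\zeta_8)^*)=\trace(v)+\trace((1\pm\zeta_8+\zeta_8^2)v(1\pm\zeta_8+\zeta_8^2)^*)$, an averaging identity, to conclude. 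You instead extract the clean, uniform sufficient criterion
\[
\eta^{1/2}+\eta^{-1/2}\;\leq\;2\,N_{\min}^{1/s},
\]
obtained by pairing the worst-case ``log-midpoint'' discrete-minimum bound $\sqrt{c_1c_2}\,(\eta^{1/2}+\eta^{-1/2})$ against the AM--GM lower bound $2\sqrt{c_1c_2}\,N_{\min}^{1/s}$ for non-units. This is a nicer organising principle: it makes transparent \emph{why} the five positive cases work (the fundamental unit is small relative to the smallest non-unit norm) and why $\mathbb{Q}(\sqrt 6)$ fails, and it subsumes the paper's $\mathbb{Q}(\zeta_8)$ special trick as just the equality case of the criterion. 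It is also worth noting that your uniform argument cleanly covers the elements of norm $-2$ in $\mathbb{Q}(\sqrt 3)$, such as $1+\sqrt 3$: these are \emph{not} dominated componentwise by any unit, and the paper's Pell analysis only treats $p^2-3q^2=+2$, so your route actually closes a loose end in the paper's treatment of that case.

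Two small points. First, your closing worry about the borderline cases $\mathbb{Q}(\sqrt 3)$ and $\mathbb{Q}(\zeta_8)$ is unnecessary: even with equality $\eta^{1/2}+\eta^{-1/2}=2N_{\min}^{1/s}$, the chain
\[
\text{(unit min)}\;\leq\;\sqrt{c_1c_2}\,(\eta^{1/2}+\eta^{-1/2})\;=\;2\sqrt{c_1c_2}\,N_{\min}^{1/s}\;\leq\;\text{(any non-unit value)}
\]
still yields unit-min $\leq$ non-unit-min for every weight vector $(c_1,c_2)$, which is all that unit reducibility requires (the definition is a non-strict inequality); no separate verification at the critical weight is needed. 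Second, in the $\mathbb{Q}(\sqrt 6)$ counterexample the claim ``$q_\alpha(\varphi(\epsilon^k\mathbf{v}))=10$ for $k\in\{-1,0,1\}$'' is an arithmetic slip: with $c_1=5-2\sqrt 6$, $c_2=5+2\sqrt 6$ the continuous minimiser sits at $t_0=1/2$, so $f(0)=f(1)=10$ by symmetry, but $f(-1)=f(2)=970$ (a strictly convex function cannot take the same value at three integers). The unit minimum is still $10$ and the counterexample via $2+\sqrt 6$ (value $4$) stands, so the conclusion is unaffected.
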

\begin{proof}
See appendices.
\end{proof}
We are now equipped to prove some useful properties about Minkowski reduced bases.
\begin{theorem}\label{minkowskibounds1}
Let $\Lambda$ be a lattice of dimension $d \leq D$ spanned over a right-Euclidean order $\mathcal{O}$, and assume that $K^D$ is left unit reducible. Denote by $B=\{\mathbf{b}_1,\dots,\mathbf{b}_d\}$, $\mathbf{b}_i \in K^D$ the basis vectors for $\Lambda$. Assume that $\Lambda$ is Minkowski reduced with respect to the norm induced by $q_{\alpha}$, for some $\alpha \in \mathcal{P}$. Denote by $\lambda_1,\dots,\lambda_d$ the successive minima of the lattice with respect to this norm. Then, for all $1 \leq k \leq d$, we have
\begin{align*}
    q_{\alpha}(\varphi(\mathbf{b}_k)) \leq \delta_k^2\lambda_k^2,
\end{align*}
where
\begin{align*}
    \delta_1=1, \hspace{1mm} \delta_k^2=1+\rho_{\alpha,\mathcal{O}}\sum_{i=1}^{k-1}\delta_i^2.
\end{align*}
\end{theorem}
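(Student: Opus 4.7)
The plan is to prove the claim by induction on $k$, using Theorem \ref{minkowskigcd} so that it suffices to exhibit, for each $k$, a lattice vector $\mathbf{v} = \sum_{i=1}^d x_i \mathbf{b}_i$ with $\gcd(x_k, \dots, x_d) = 1$ and $q_\alpha(\varphi(\mathbf{v})) \leq \delta_k^2 \lambda_k^2$; the Minkowski reduction property then forces $q_\alpha(\varphi(\mathbf{b}_k)) \leq q_\alpha(\varphi(\mathbf{v}))$. For the base case $k = 1$, I would take a nonzero $\mathbf{v}$ attaining $\lambda_1$, factor $\mathbf{v} = g\mathbf{v}'$ with $\mathbf{v}'$ of coordinate gcd $1$ using right-Euclideanness (so $\mathbf{v}'$ is extendable by Corollary \ref{extendable}), then apply left unit reducibility to the unit-reduced representative of $\mathbf{v}'$ to obtain $N(\mathbf{v}') \leq N(g \mathbf{v}') = \lambda_1$; combined with $N(\mathbf{v}') \geq \lambda_1$, this yields $N(\mathbf{b}_1) = \lambda_1 = \delta_1 \lambda_1$.

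For the inductive step, pick $j \leq k$ with $\mathbf{s}_j \notin \text{Span}_K(\mathbf{b}_1, \dots, \mathbf{b}_{k-1})$ (such $j$ exists by dimension counting). Writing $\mathbf{s}_j = \sum x_i \mathbf{b}_i$ and setting $g = \gcd(x_k, \dots, x_d)$, form the lattice vector $\mathbf{u} = \sum_{l \geq k}(x_l/g)\mathbf{b}_l$, which has tail-gcd $1$. Define $\mathbf{v} = \mathbf{u} + \sum_{l=1}^{k-1} y_l \mathbf{b}_l$, with $y_{k-1}, \dots, y_1$ chosen iteratively from $\mathcal{O}$ via the approximation lemma so that each Gram--Schmidt coefficient $c_l(\mathbf{v})$ on $\mathbf{b}_l(l)$ from (\ref{orthogonal}) satisfies $q_\alpha(c_l(\mathbf{v})) \leq \rho_{\alpha, \mathcal{O}}$. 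Since modifying head coordinates does not alter the tail, $\mathbf{v}$ retains tail-gcd $1$, and so $\{\mathbf{b}_1, \dots, \mathbf{b}_{k-1}, \mathbf{v}\}$ is extendable by Corollary \ref{GCD}.

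Bounding $q_\alpha(\varphi(\mathbf{v}))$ proceeds via the orthogonality lemma: the head terms ($l < k$) sum to at most $\rho_{\alpha, \mathcal{O}} \sum_{l<k} q_\alpha(\mathbf{b}_l(l))$ by the norm property $q_\alpha(c \mathbf{b}_l(l)) \leq q_\alpha(c) q_\alpha(\mathbf{b}_l(l))$, and combining with the inequality $q_\alpha(\mathbf{b}_l(l)) \leq q_\alpha(\varphi(\mathbf{b}_l))$ (immediate from the Gram--Schmidt orthogonality lemma) and the inductive hypothesis, this is at most $\rho_{\alpha, \mathcal{O}} \lambda_k^2 \sum_{l<k} \delta_l^2$. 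The tail terms ($l \geq k$) form the Gram--Schmidt tail norm of $\mathbf{u}$; the target bound $\lambda_k^2$ for this quantity, added to the head bound, yields exactly $\delta_k^2 \lambda_k^2$, closing the induction.

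The main obstacle is establishing the tail bound when $g > 1$: the identity $c_l(\mathbf{s}_j) = \varphi(g) c_l(\mathbf{u})$ for $l \geq k$ combined with sub-multiplicativity of $q_\alpha$ only yields the inequality in the wrong direction, since $q_\alpha(\text{GS-tail of } \mathbf{s}_j) \leq q_\alpha(\varphi(g)) q_\alpha(\text{GS-tail of } \mathbf{u})$. I would resolve this by applying left unit reducibility to $\mathbf{v}$ itself: after rescaling $\mathbf{v}$ by the unit that minimises its norm in its unit orbit, $N(\mathbf{v}) \leq N(g \mathbf{v})$, and by drawing the $y_l$'s from the coset $g^{-1}(x_l + g \mathcal{O})$ within $\mathcal{O}$ (with the approximation lemma adapted to the sublattice $g \mathcal{O}$), $g \mathbf{v}$ equals $\mathbf{s}_j$ plus a controlled head correction, so $N(g \mathbf{v}) \leq \delta_k \lambda_k$, which transfers to $N(\mathbf{v}) \leq \delta_k \lambda_k$. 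This is the only step in the argument where the left unit reducibility hypothesis on $K^D$ is essential.
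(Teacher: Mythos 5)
Your overall architecture — induct on $k$, split a candidate vector into a Gram--Schmidt head (corrected against $\Lambda_{k-1}$ with precision $\rho_{\alpha,\mathcal{O}}$) and a tail (bounded by $\lambda_k^2$), invoke extendability and Minkowski reduction — matches the paper, and your base case is correct. The problem is exactly the one you flag yourself, and your proposed fix does not close it.

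Concretely, writing $\mathbf{s}_j=\sum_{i=1}^d x_i\mathbf{b}_i$ and $g=\gcd(x_k,\dots,x_d)$, you form $\mathbf{u}=\sum_{l\geq k}(x_l/g)\mathbf{b}_l$ and then $\mathbf{v}=\mathbf{u}+\sum_{l<k}y_l\mathbf{b}_l$. You then want $N(\mathbf{v})\leq N(g\mathbf{v})\leq\delta_k\lambda_k$ by choosing $y_l\in g^{-1}(x_l+g\mathcal{O})\cap\mathcal{O}$. But $g^{-1}(x_l+g\mathcal{O})=g^{-1}x_l+\mathcal{O}$, and this coset meets $\mathcal{O}$ only if $g\mid x_l$ in $\mathcal{O}$ — false in general for $l<k$ since $g$ was the gcd of the \emph{tail} coordinates only. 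Dropping the ``within $\mathcal{O}$'' restriction is not an option either: then $\mathbf{v}\notin\Lambda$, so Corollary \ref{GCD} and Theorem \ref{minkowskigcd} no longer give $q_\alpha(\varphi(\mathbf{b}_k))\leq q_\alpha(\varphi(\mathbf{v}))$. And even setting the coset issue aside, if one rounds the head of $g\mathbf{v}$ against $\mathbf{s}_j$, the available shifts are $gy_l-x_l\in g\mathcal{O}-x_l$, a coset of the \emph{dilated} sublattice $g\mathcal{O}$; the achievable Gram--Schmidt precision then scales like $q_\alpha(g)\,\rho_{\alpha,\mathcal{O}}$ rather than $\rho_{\alpha,\mathcal{O}}$, and $q_\alpha(g)$ is unbounded. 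So $N(g\mathbf{v})$ cannot be bounded by $\delta_k\lambda_k$ along this route.

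The paper applies left unit reducibility not to the whole candidate $\mathbf{v}$ but to the orthogonal tail component alone. It chooses a vector $\mathbf{v}_j^*$ with $\{\mathbf{b}_1,\dots,\mathbf{b}_{k-1},\mathbf{v}_j^*\}$ a primitive system and $\mathbf{s}_j=\sum_{i<k}x_i\mathbf{b}_i+l\mathbf{v}_j^*$ (so $l$ plays the role of your $g$), decomposes $\mathbf{v}_j^*=\mathbf{p}+\mathbf{q}$ with $\mathbf{q}\perp\text{Span}_K(B_{k-1})$, and picks the unit $u$ minimising $q_\alpha(\varphi(u\mathbf{q}))$. Unit reducibility then gives $q_\alpha(\varphi(u\mathbf{q}))\leq q_\alpha(\varphi((lu^{-1})(u\mathbf{q})))=q_\alpha(\varphi(l\mathbf{q}))\leq\lambda_j^2\leq\lambda_k^2$, because $lu^{-1}\in\mathcal{O}\setminus\{0\}$. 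The candidate is $u\mathbf{v}_j^*-\mathbf{y}$ for a head-correcting $\mathbf{y}\in\Lambda_{k-1}$; since the correction is taken in the \emph{full} lattice $\Lambda_{k-1}$, not a dilated copy, the Gram--Schmidt coefficients stay $\leq\rho_{\alpha,\mathcal{O}}$, and the tail is $u\mathbf{q}$ with $q_\alpha\leq\lambda_k^2$. Summing these gives the recursion for $\delta_k^2$. In your notation this is precisely: apply unit reducibility to the tail of $\mathbf{u}$, namely $\mathbf{q}=g^{-1}\cdot(\text{tail of }\mathbf{s}_j)$, obtain $q_\alpha(u\mathbf{q})\leq q_\alpha(g\mathbf{q})\leq\lambda_k^2$, then head-correct $u\mathbf{u}$ against $\Lambda_{k-1}$ freely. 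The key point you missed is that the unit-reduction step must be applied \emph{before} the head is glued on, to the orthogonal complement alone, so that the free head correction is not polluted by a factor of $g$.
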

\begin{proof}
By Theorem \ref{minkowskigcd} and the fact that $K^D$ is left unit reducible, we must have that the norm of $\mathbf{b}_1$ corresponds to the first successive minima. Denote by $\Lambda_{k-1}$ the sublattice of $\Lambda$ generated by the left-linear span of $B_{k-1}=\{\mathbf{b}_1,\dots,\mathbf{b}_{k-1}\}$ over $\mathcal{O}$, for some $2 \leq k \leq d$. By the definition of the successive minima, there exist $k$ linearly independent lattice vectors $\mathbf{s}_1,\dots,\mathbf{s}_k$ such that $q_{\alpha}(\varphi(\mathbf{s}_i))=\lambda_i^2$, $1 \leq i \leq k$. By the pigeonhole principle, there must exist at least one $\mathbf{s}_j$ such that $\mathbf{s}_j \not\in \Lambda_{k-1}$. However, there must exist a lattice vector $\mathbf{v}_j^* \in \Lambda$ so that $\{\mathbf{b}_1,\mathbf{b}_2,\dots,\mathbf{b}_{k-1},\mathbf{v}_j^*\}$ forms a primitive system for a sublattice containing $\mathbf{s}_j$, and therefore by proposition \ref{primitive}, this set must also be extendable to a basis for $\Lambda$, and so
\begin{align*}
    \mathbf{s}_j=\sum_{i=1}^{k-1}x_i \mathbf{b}_i + l\mathbf{v}_j^*,
\end{align*}
where $x_i, l \in \mathcal{O}$. Decompose $\mathbf{v}_j^*=\mathbf{p}+\mathbf{q}$, where the vector $\mathbf{q}$ is orthogonal to the space $\text{Span}_K(B_{k-1})$, which must be a nonzero vector. Then
\begin{align*}
    q_\alpha(\varphi(\mathbf{s}_j))=\lambda_j^2=q_\alpha\left(\varphi \left(\sum_{i=1}^{k-1}x_i\mathbf{b}_i+l\mathbf{p}\right)\right)+q_\alpha\left(\varphi(l\mathbf{q})\right)=q_\alpha\left(\varphi \left(\sum_{i=1}^{k-1}x_i\mathbf{b}_i+l\mathbf{p}\right)\right)+q_\alpha\left(\varphi((lu^{-1})(u\mathbf{q}))\right),
\end{align*}
where $u \in \mathcal{O}^\times$ is chosen to minimise the function $q_\alpha(\varphi(u\mathbf{q}))$. By the assumption that $K^D$ is left unit reducible, 
\begin{align*}
    \lambda_j^2 \geq q_\alpha(\varphi((lu^{-1})(u\mathbf{q}))) \geq q_{\alpha}(\varphi(u\mathbf{q})).
\end{align*}
Suppose that $u\mathbf{p}=\sum_{i=1}^{k-1}p_i\mathbf{b}_i$, for some $p_i \in K$. Using the orthogonalisation process detailed in lemma \ref{orthogonal}, by choosing a $\mathbf{y} \in \Lambda_{k-1}$ carefully so that
\begin{align*}
    q_\alpha(\varphi(u\mathbf{p}-\mathbf{y}))=q_\alpha\left(\sum_{i=1}^{k-1}m_i\mathbf{b}_i(i)\right)
\end{align*}
where $m_i \in K_{\mathbb{R}}$ such that $q_{\alpha}(m_i) \leq \rho_{\alpha,\mathcal{O}}$. Since $\mathbf{v}_j^*$ is extendable to a basis vector, we have
\begin{align*}
    &q_\alpha(\varphi(\mathbf{b}_k)) \leq q_\alpha(\varphi(u\mathbf{v}_j^*-\mathbf{y})) =q_\alpha(\varphi(u\mathbf{p}-\mathbf{y}))+q_\alpha(\varphi(u\mathbf{q}))
    \\& \leq q_\alpha\left(\sum_{i=1}^{k-1}m_i\mathbf{b}_i(i)\right)+\lambda_k^2 \leq \sum_{i=1}^{k-1}q_{\alpha}(m_i)q_{\alpha}(\mathbf{b}_i(i))+\lambda_k^2 \leq \rho_{\alpha,\mathcal{O}}\sum_{i=1}^{k-1}q_\alpha(\mathbf{b}_i(i))+\lambda_k^2
    \\& \leq \rho_{\alpha,\mathcal{O}}\sum_{i=1}^{k-1}q_\alpha(\varphi(\mathbf{b}_i))+\lambda_k^2\leq \rho_{\alpha,\mathcal{O}}\sum_{i=1}^{k-1}\delta_i^2\lambda_i^2+\lambda_k^2\leq \left(\rho_{\alpha,\mathcal{O}}\sum_{i=1}^{k-1}\delta_i^2+1\right)\lambda_k^2,
\end{align*}
as required.
\end{proof}
An exponential upper bound on the length of the basis vectors in terms of the successive minima immediately follows. By definition, we have
\begin{align*}
    &\delta_k^2=1+\rho_{\alpha,\mathcal{O}}\sum_{i=1}^{k-1}\delta_i^2=\left(1+\rho_{\alpha,\mathcal{O}}\sum_{i=1}^{k-2}\delta_i^2\right)+\rho_{\alpha,\mathcal{O}}\delta_{k-1}^2=\left(1+\rho_{\alpha,\mathcal{O}}\right)\delta_{k-1}^2,
\end{align*}
and so applying this recursively, and using the fact that $\delta_1=1$,
\begin{align*}
    \delta_k^2=\left(1+\rho_{\alpha,\mathcal{O}}\right)^{k-1},
\end{align*}
for all $1 \leq k \leq d$. Of course, for many fields it will hold that this bound is not optimal. For example, the authors have shown in previous work that for Euclidean imaginary quadratic fields, the vectors corresponding to the first two successive minima are always extendable to a basis for the lattice \cite{algebraicLLL}, and as such the exponent $k-1$ may be replaced with $k-l$, for some $l >1$.
\\ In fact, the bound can be improved for both imaginary quadratic and rational quaternion fields.
\begin{theorem}\label{minkowskiquaternion}
Let $K$ be a Euclidean imaginary quadratic field or a rational quaternion field, and let $\mathcal{M}(K)$ denote the Euclidean minimum of $K$. Let $q: K_\mathbb{R}^D \to \mathbb{R}^+$ be the quadratic norm defined by $q(\mathbf{v})=\sum_{i=1}^Dv_iv_i^*$, for $\mathbf{v}=(v_1,\dots,v_D) \in K_{\mathbb{R}}^D$. Let $\Lambda$ be a lattice spanned over $\mathcal{O}$, where $\mathcal{O}$ is a maximal order of $K$, and denote by $B=\{\mathbf{b}_1,\dots,\mathbf{b}_d\}$ a Minkowski reduced basis for $\Lambda$, and let $\lambda_1,\dots,\lambda_d$ denote the successive minima of $\Lambda$ with respect to the norm $\sqrt{q(\mathbf{v})}$. Then for all $1 \leq k \leq d$,
\begin{align*}
    q(\mathbf{b}_k) \leq \delta_k^2\lambda_k^2,
\end{align*}
where
\begin{align}
    &\delta_k=1, \hspace{2mm} 1 \leq k \leq g, \nonumber\\
    &\delta_k^2=\max\left\{1,\mathcal{M}(K)\sum_{i=1}^{k-1}\delta_i^2+\frac{1}{\text{nrd}_{K/\mathbb{Q}}(l)}\right\}, k>g, \label{quaternionbound}
\end{align}
where $l$ is the smallest element with respect to $\text{nrd}_{K/\mathbb{Q}}$ that is not zero or a unit and $g=\max\left\{2,\left\lfloor \mathcal{M}(K)^{-1}\right\rfloor\right\}$ except for the case $K=\frac{(-2,-5)}{\mathbb{Q}}$ for which $g=1$.
\end{theorem}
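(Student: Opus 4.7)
The plan is to refine the proof of Theorem \ref{minkowskibounds1} by exploiting two features specific to Euclidean imaginary quadratic fields and rational quaternion fields: the quadratic norm $q$ is strictly multiplicative on scalars, i.e., $q(a\mathbf{v}) = \text{nrd}_{K/\mathbb{Q}}(a)\, q(\mathbf{v})$ for all $a \in K$, and Euclidean division in these orders admits the sharp constant $\mathcal{M}(K)$ in place of the generic $\rho_{\alpha,\mathcal{O}}$ of Theorem \ref{minkowskibounds1}. Since the units of $\mathcal{O}$ are roots of unity of reduced norm one, left unit reducibility is automatic and scalar multiplication by units may be ignored throughout.

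For the base case $k \leq g$ I would invoke the extendability results of \cite{algebraicLLL}, which show that for these fields any set of up to $g$ linearly independent lattice vectors realising the first $g$ successive minima is extendable to a basis of $\Lambda$. Minkowski reduction then forces $q(\mathbf{b}_k) = \lambda_k^2$ for every such $k$, yielding $\delta_k = 1$.

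For the inductive step $k > g$, I would mimic the structure of the proof of Theorem \ref{minkowskibounds1}. By the pigeonhole principle among $\mathbf{s}_1,\dots,\mathbf{s}_k$, pick $\mathbf{s}_j \notin \Lambda_{k-1}$; by Corollary \ref{extendable} obtain a primitive $\mathbf{v}_j^\ast \in \Lambda$ so that $\{\mathbf{b}_1,\dots,\mathbf{b}_{k-1},\mathbf{v}_j^\ast\}$ is extendable to a basis, and write $\mathbf{s}_j = \sum_{i<k} x_i \mathbf{b}_i + l\,\mathbf{v}_j^\ast$. Orthogonally decompose $\mathbf{v}_j^\ast = \mathbf{p} + \mathbf{q}$ with $\mathbf{q}$ perpendicular to $\text{Span}_K(B_{k-1})$. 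Multiplicativity of $q$ gives $\text{nrd}_{K/\mathbb{Q}}(l)\, q(\mathbf{q}) = q(l\mathbf{q}) \leq q(\mathbf{s}_j) = \lambda_j^2 \leq \lambda_k^2$, so $q(\mathbf{q}) \leq \lambda_k^2 / \text{nrd}_{K/\mathbb{Q}}(l)$, with the worst case occurring when $l$ is a non-unit of smallest reduced norm. Iterating Euclidean division along the Gram--Schmidt directions $\mathbf{b}_i(i)$ produces $\mathbf{y} \in \Lambda_{k-1}$ with $\mathbf{p} - \mathbf{y} = \sum_{i<k} m_i \mathbf{b}_i(i)$ and $q(m_i) \leq \mathcal{M}(K)$, whence $q(\mathbf{p} - \mathbf{y}) \leq \mathcal{M}(K) \sum_{i<k} q(\mathbf{b}_i(i)) \leq \mathcal{M}(K) \sum_{i<k} \delta_i^2 \lambda_i^2$. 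Combining these via Minkowski reduction, $q(\mathbf{b}_k) \leq q(\mathbf{v}_j^\ast - \mathbf{y}) = q(\mathbf{p} - \mathbf{y}) + q(\mathbf{q})$, delivers the recursion (\ref{quaternionbound}); the outer $\max\{1,\cdot\}$ simply records that $q(\mathbf{b}_k) \geq \lambda_k^2$ never decreases past $\lambda_k^2$ itself.

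The main obstacle will be pinning down the precise value $g = \max\{2, \lfloor \mathcal{M}(K)^{-1}\rfloor\}$ and handling the exceptional quaternion field $K = \frac{(-2,-5)}{\mathbb{Q}}$. Establishing that every index up to $g$ enjoys automatic extendability amounts to a careful application of the GCD criterion of Corollary \ref{GCD} combined with the geometry of short vectors: when $\mathcal{M}(K)^{-1}$ is an integer, vectors of length close to $\lambda_k$ are forced to have coefficient tuples with unit GCD, while for $\frac{(-2,-5)}{\mathbb{Q}}$ the arithmetic of the maximal order breaks this argument and forces $g = 1$, which I would verify by direct computation on short vectors of explicit ideals in that order.
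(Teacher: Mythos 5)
Your proposal reproduces the paper's own argument essentially step for step: the base case $k \le g$ via the extendability results of \cite{algebraicLLL} (the paper also cites \cite{stern2} and the author's thesis for the precise value of $g$), the pigeonhole selection of $\mathbf{s}_j \notin \Lambda_{k-1}$, the primitive completion $\mathbf{v}_j^\ast$, the orthogonal split $\mathbf{v}_j^\ast = \mathbf{p}+\mathbf{q}$, the multiplicativity $q(l\mathbf{q}) = \text{nrd}_{K/\mathbb{Q}}(l)\,q(\mathbf{q})$ to get the $1/\text{nrd}_{K/\mathbb{Q}}(l)$ term (after dismissing the case $l\in\mathcal{O}^\times$, which your outer $\max$ absorbs), and Euclidean size-reduction of $\mathbf{p}$ along the Gram--Schmidt directions to obtain the $\mathcal{M}(K)\sum\delta_i^2$ term. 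The one place you are more tentative than the paper is the precise determination of $g$ and the exceptional case $K = \frac{(-2,-5)}{\mathbb{Q}}$, where you sketch a GCD-based verification; the paper likewise defers this to external references rather than proving it in the body, so you are at the same level of completeness there.
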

\begin{proof}
The values for $g$ follows from the fact that the first $g$ vectors must have lengths corresponding to the first $g$ successive minima of the lattice (this is proved in \cite{algebraicLLL}, \cite{stern2}, and the author's PhD thesis). As in the proof of the previous theorem, we take the sublattice $\Lambda_{k-1}$ spanned by the basis $\{\mathbf{b}_1,\dots,\mathbf{b}_{k-1}\}$, and so there exists some $1 \leq j \leq k$ such that the vector whose length corresponds to $\lambda_j$ is not contained in $\Lambda_{k-1}$, call this vector $\mathbf{s}_j$. Then there exists a $\mathbf{v}_j^*$ so that the set $\{\mathbf{b}_1,\dots,\mathbf{b}_{k-1},\mathbf{v}_j^*\}$ for some lattice vector $\mathbf{v}_j^*$ is extendable to a basis for $\Lambda$, and
\begin{align*}
    \mathbf{s}_j=\sum_{i=1}^{k-1}x_i\mathbf{b}_i+l\mathbf{v}_j^*,
\end{align*}
for some $x_i,l \in \mathcal{O}$. Since the field we are considering is either imaginary quadratic or rational quaternion, it holds that $q(x\mathbf{v})=\nrd_{K/\mathbb{Q}}(x)q(\mathbf{v})$ for all $x \in K_\mathbb{R}, \mathbf{v} \in K_{\mathbb{R}}^D$. Let $\mathbf{v}_j^*=\mathbf{p}+\mathbf{q}$, where $\mathbf{q}$ is orthogonal to the space $\text{Span}_{K_\mathbb{R}}(B_{k-1})$. Then
\begin{align*}
    q(\mathbf{s}_j)=\lambda_j^2=q\left(\sum_{i=1}^{k-1}x_i\mathbf{b}_i+l\mathbf{p}\right)+q(l\mathbf{q}).
\end{align*}
It must hold that $l \neq 0$, and if $l \in \mathcal{O}^\times$ (the unit group of $\mathcal{O}$), the vector $\mathbf{s}_j$ would be extendable to a basis vector and so $q(\mathbf{b}_k) = \lambda_j^2 \leq \lambda_k^2$. It must therefore hold that $q(\mathbf{q}) \leq \frac{1}{\text{nrd}_{K/\mathbb{Q}}(l)}\lambda_j^2$. For all $x \in K_{\mathbb{R}}$, there exists a $y \in \mathcal{O}$ such that $\text{nrd}_{K/\mathbb{Q}}(x-y) \leq \mathcal{M}(K)$, and so the result follows by an identical argument as before.
\end{proof}
This bound is tight for certain fields.
\begin{proposition}
Suppose that $\mathcal{O}$ is the maximal order of a rational quaternion or imaginary quadratic field $K$, and let $l$ be the smallest element with respect to $\text{nrd}_{K/\mathbb{Q}}$ that is not zero or a unit. Then there exists a lattice of dimension $d=\text{nrd}_{K/\mathbb{Q}}(l)+1$ with a Minkowski reduced basis $B=\{\mathbf{b}_1,\dots,\mathbf{b}_d\}$ such that $q(\mathbf{b}_d)=\left(1+\frac{1}{\text{nrd}_{K/\mathbb{Q}}(l)}\right)\lambda_d^2$.
\end{proposition}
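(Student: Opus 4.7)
The plan is to exhibit an explicit lattice that attains the claimed ratio. Set $n = \text{nrd}_{K/\mathbb{Q}}(l)$ and $d = n+1$, and let $\Lambda \subset K^d$ be the lattice with basis $\mathbf{b}_i = \mathbf{e}_i$ for $1 \leq i \leq d-1$ and $\mathbf{b}_d = \frac{1}{l}(\mathbf{e}_1 + \mathbf{e}_2 + \cdots + \mathbf{e}_d)$, where $\mathbf{e}_1, \ldots, \mathbf{e}_d$ denote the standard coordinate vectors of $K^d$. A direct computation gives $q(\mathbf{b}_d) = d \cdot |1/l|^2 = d/n = 1 + 1/n$, the required value. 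Because $\mathbf{e}_d = l\mathbf{b}_d - \sum_{i=1}^{d-1} \mathbf{e}_i$, every standard coordinate vector lies in $\Lambda$, supplying $d$ linearly independent unit-length vectors, and hence $\lambda_k \leq 1$ for all $k$. The goal is then to verify that $B = \{\mathbf{e}_1, \ldots, \mathbf{e}_{d-1}, \mathbf{b}_d\}$ is a Minkowski reduced basis with $\lambda_k = 1$ for every $k$.

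The key auxiliary inequality is $|a + 1/l|^2 \geq 1/n$ for every $a \in \mathcal{O}$. Multiplying through by $|l|^2 = n$ converts this to $|al + 1|^2 \geq 1$: the maximality of $\mathcal{O}$ inside a rational quaternion algebra or imaginary quadratic field is essential here, since it forces $\text{nrd}_{K/\mathbb{Q}}$ to take nonnegative integer values on $\mathcal{O}$, and $al + 1 \neq 0$ (otherwise $l$ would be a unit). Given this, $\lambda_1 \geq 1$ follows by a case analysis on $|x_d|^2$ for a generic lattice vector $\mathbf{v} = \sum x_i \mathbf{b}_i = (x_1 + x_d/l, \ldots, x_{d-1} + x_d/l, x_d/l)$: since $l$ is the smallest non-unit, $|x_d|^2 \in \{0, 1\} \cup [n, \infty)$; the case $x_d = 0$ is immediate from $|x_i|^2 \geq 1$ for a nonzero $x_i$, the case $|x_d|^2 \geq n$ yields $|x_d/l|^2 \geq 1$ in the final coordinate, and when $x_d = u$ is a unit the inequality applied coordinatewise gives $q(\mathbf{v}) \geq (d-1)/n + 1/n = 1 + 1/n > 1$. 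Combined with the upper bound, $\lambda_k = 1$ for all $k$.

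To verify Minkowski reduction of $B$, observe that for each $k \leq d-1$ the set $\{\mathbf{e}_1, \ldots, \mathbf{e}_k\}$ is a primitive system of $\Lambda$: any lattice vector whose last $d-k$ coordinates vanish must have $x_d = 0$ (from the final coordinate), which then forces $x_{k+1} = \cdots = x_{d-1} = 0$. Proposition \ref{primitive} therefore supplies extendability, so the choice $\mathbf{b}_k = \mathbf{e}_k$ attains $q(\mathbf{b}_k) = 1 = \lambda_k^2$ and satisfies the Minkowski condition. For the final slot, I would characterise the extendable completions: any $\mathbf{v}$ making $\{\mathbf{e}_1, \ldots, \mathbf{e}_{d-1}, \mathbf{v}\}$ a basis of $\Lambda$ must have $d$-th coordinate of the form $u/l$ with $u \in \mathcal{O}^\times$, since this coordinate has to generate the rank-one $\mathcal{O}$-module $\frac{1}{l}\mathcal{O}$; subtracting $u\mathbf{b}_d$ then forces $\mathbf{v} = (a_1 + u/l, \ldots, a_{d-1} + u/l, u/l)$ with $a_i \in \mathcal{O}$. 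Applying the key inequality to each of the $d$ summands of $q(\mathbf{v})$ gives $q(\mathbf{v}) \geq d/n$, with equality realised exactly by $\mathbf{b}_d$. The principal obstacle is this characterisation of extendable completions together with the integrality lemma it relies on; both depend crucially on the maximality of $\mathcal{O}$ and on the integer-valuedness of the reduced norm, which is precisely why the hypothesis restricts to these two families of algebras.
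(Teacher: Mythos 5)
Your construction is the paper's example scaled on the right by $l^{-1}$: the paper takes $\mathbf{b}_i = l\mathbf{e}_i$ for $i<d$ and $\mathbf{b}_d = (1,\dots,1)$, so your lattice is $\Lambda l^{-1}$ with all quadratic norms divided by $\text{nrd}_{K/\mathbb{Q}}(l)$, leaving the ratio $q(\mathbf{b}_d)/\lambda_d^2$ unchanged. So the approach is essentially the same, though your write-up supplies the careful verification (the case analysis on $\text{nrd}(x_d)$ to pin down $\lambda_1$, the identification of extendable completions as $u\mathbf{b}_d + \sum a_i\mathbf{e}_i$ with $u\in\mathcal{O}^\times$, and the key inequality $\text{nrd}(a_il+u)\geq 1$) that the paper's proof asserts only in passing.
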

\begin{proof}
Consider the lattice with the basis \begin{align*}\mathbf{b}_1=(l,0,\dots,0)^T, \mathbf{b}_2=(0,l,0,\dots,0)^T, \dots, \mathbf{b}_{d-1}=(0,0,\dots,l,0)^T,\mathbf{b}_d=(1,1,\dots,1,1)^T,\end{align*} $\mathbf{b}_i \in K^d$. It is easy to see that the lattice is Minkowski reduced - the vectors $\mathbf{b}_1,\mathbf{b}_2,\dots,\mathbf{b}_{d-1}$ are orthogonal, and given that $\mathbf{l}$ is not a unit we must have $\text{nrd}_{K/\mathbb{Q}}(1+pl) \geq 1$ for all $p \in \mathcal{O}$, so we cannot reduce the length of vector $\mathbf{b}_d$ by taking the transformation $\mathbf{b}_d+\sum_{i=1}^{d-1}p_i\mathbf{b}_i$, $p_i \in \mathcal{O}$. However, we have $l\mathbf{b}_d-\sum_{i=1}^{d-1}\mathbf{b}_i=(0,0,\dots,0,l)^T$, which has length $\text{nrd}_{K/\mathbb{Q}}(l)<d$. Moreover, this vector is orthogonal to the vectors $\mathbf{b}_1,\dots,\mathbf{b}_{d-1}$, and so must be the final successive minima of the lattice. The proposition follows.
\end{proof}
For the fields $K=\mathbb{Q}(i),\frac{(-1,-1)}{\mathbb{Q}}$ we have $\mathcal{M}(K)=1/2$ and a minimal nonzero, nonunit element of norm $2$, and hence we have $\delta_3^2=1+\frac{1}{2}=\frac{3}{2}=\mathcal{M}(K)\sum_{i=1}^2\delta_i^2 + \frac{1}{2}$. Similarly, for $\mathbb{Q}(\sqrt{-3})$ we have $\mathcal{M}(K)=1/3$ and a minimal nonzero, nonunit element of norm $3$, and hence we have $\delta_3^2=1+\frac{1}{3}=\frac{4}{3}=\mathcal{M}(K)\sum_{i=1}^3\delta_i^2 + \frac{1}{3}$, showing that for these fields our bound is sharp. It is interesting that these are the Euclidean fields whose maximal orders have $\mathbb{Z}$--bases that can be described by the fundamental units.
\section{Korkin-Zolotarev Reduction}
\subsection{Hermite-Korkin-Zolotarev reduction}
The proof methods for the results in this section follow closely to that of \cite{HKZ}. We have established that, by the method in equation (\ref{orthogonal}), we may represent a lattice vector as the direct sum of its orthogonal components over the quadratic norm $q_{\alpha}$ for any $\alpha \in \mathcal{P}$.
\begin{definition}
Assume that $K$ is a division algebra that is right-Euclidean, and let $\Lambda$ be a $d$-dimensional lattice over a left unit--reducible order $\mathcal{O}$ with basis $B=\{\mathbf{b}_1,\dots,\mathbf{b}_d\}$. We say that $B$ is \emph{HKZ--reduced} with respect to the quadratic norm $q_{\alpha}$ if, for all $1 \leq k \leq d$, the following properties are satisfied:
\begin{itemize}
    \item $\min_{(x_k,x_{k+1},\dots,x_d) \in \mathcal{O}^{d-k+1} \setminus \{\mathbf{0}\}}q_\alpha\left(\sum_{i=k}^d \varphi(x_i)\mathbf{b}_i(i)\right)=q_{\alpha}(\mathbf{b}_k(k))$,
    \item For all $1 \leq i <k$, $\min_{y \in \mathcal{O}}q_{\alpha}(\mu_{k,i}-\varphi(y))=q_\alpha(\mu_{k,i})$.
\end{itemize}
\end{definition}
\begin{proposition}
If $\mathcal{O}$ is right-Euclidean and $K^D$ is left unit reducible, then for any $\alpha \in \mathcal{P}$, and some lattice $\Lambda$ with basis $B=\{\mathbf{b}_1,\dots,\mathbf{b}_d\}$, $\mathbf{b}_i \in K^D$, there exists a primitive system $B^\prime=\{\mathbf{b}_1^\prime,\dots, \mathbf{b}_d^\prime\}$ such that $B^\prime$ is HKZ reduced.
\end{proposition}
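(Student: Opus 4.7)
The plan is to construct $B^\prime$ inductively on $k$, assembling the primitive system one vector at a time so that the HKZ conditions at each index are satisfied before moving on. This mirrors the classical HKZ construction: at each stage, $\mathbf{b}_k^\prime$ is chosen so that the Gram--Schmidt component $\mathbf{b}_k^\prime(k)$ (the projection onto the orthogonal complement of $\text{Span}_K(\mathbf{b}_1^\prime,\ldots,\mathbf{b}_{k-1}^\prime)$) is as short as possible among admissible extensions, and then $\mathbf{b}_k^\prime$ is size-reduced against the preceding vectors.

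For the base case $k = 1$, take $\mathbf{b}_1^\prime$ to be a nonzero vector of $\Lambda$ of minimum $q_\alpha$-length. If its coordinates $(x_1,\ldots,x_d)$ with respect to $B$ share a common factor $g$ which is not a unit, then $g^{-1}\mathbf{b}_1^\prime$ lies in $\Lambda$ and left unit reducibility forces $q_\alpha(\varphi(g^{-1}\mathbf{b}_1^\prime)) \leq q_\alpha(\varphi(\mathbf{b}_1^\prime))$; hence the coordinates may be assumed coprime (up to a unit) and Corollary~\ref{extendable} provides the extendability. For the inductive step, suppose $\{\mathbf{b}_1^\prime, \ldots, \mathbf{b}_{k-1}^\prime\}$ is a primitive system satisfying the HKZ conditions. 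Let $\pi_k$ denote the $q_\alpha$-orthogonal projection onto the orthogonal complement of $\text{Span}_K(\mathbf{b}_1^\prime,\ldots,\mathbf{b}_{k-1}^\prime)$ and define
\[
\mathcal{A}_k := \{\mathbf{v} \in \Lambda : \{\mathbf{b}_1^\prime, \ldots, \mathbf{b}_{k-1}^\prime, \mathbf{v}\} \text{ is a primitive system of } \Lambda\}.
\]
Since $\pi_k(\Lambda)$ is discrete in a finite-dimensional real space, the infimum of $q_\alpha(\pi_k(\mathbf{v}))$ over nonzero $\mathbf{v} \in \mathcal{A}_k$ is attained; pick a minimiser $\mathbf{v}$. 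Then perform size reduction by iterating $i = k-1, k-2, \ldots, 1$ and at each step replacing $\mathbf{v}$ by $\mathbf{v} - y_i \mathbf{b}_i^\prime$, where $y_i \in \mathcal{O}$ is chosen to minimise $q_\alpha(\mu_{k,i} - \varphi(y_i))$; the existence of such a minimiser follows from the discreteness of $\varphi(\mathcal{O})$ in $K_{\mathbb{R}}$. Define $\mathbf{b}_k^\prime$ to be the output; since the process only subtracts $\mathcal{O}$-linear combinations of $\mathbf{b}_i^\prime$ for $i < k$, we still have $\mathbf{b}_k^\prime \in \mathcal{A}_k$ and $\pi_k(\mathbf{b}_k^\prime) = \pi_k(\mathbf{v})$.

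Verification of the two HKZ conditions then proceeds as follows. The second condition is immediate from the size-reduction loop: each $y_i$ is chosen to make $\mu_{k,i}$ coset-minimal modulo $\varphi(\mathcal{O})$, and modifications at index $i$ do not affect $\mu_{k,j}$ for $j > i$, so the minimisations persist through the loop. The first condition follows from the orthogonality of the Gram--Schmidt vectors $\mathbf{b}_i^\prime(i)$ (established in the lemma following equation (\ref{orthogonal})) combined with left unit reducibility: any nonzero $\sum_{i=k}^d \varphi(x_i)\mathbf{b}_i^\prime(i)$ decomposes into a sum of orthogonal terms, each bounded below by $q_\alpha(\mathbf{b}_l^\prime(l))$ for any index $l \geq k$ with $x_l \neq 0$, and these in turn are bounded below by $q_\alpha(\mathbf{b}_k^\prime(k))$ by the minimality built into the construction.

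The principal obstacle is rigorously translating the minimisation over $\mathcal{A}_k$ into the precise statement of the first HKZ condition, in particular handling tuples whose coordinates share a non-unit common factor. The resolution is a gcd-clearing argument: for any tuple $(x_k,\ldots,x_d) \in \mathcal{O}^{d-k+1}$ with common factor $g$, the rescaled tuple $(g^{-1}x_k,\ldots,g^{-1}x_d)$ corresponds via Corollary~\ref{GCD} to a genuinely primitive extension in $\mathcal{A}_k$, and left unit reducibility ensures that the rescaling does not increase $q_\alpha$; this is precisely where both standing hypotheses---right-Euclideanness of $\mathcal{O}$ (for the gcd) and left unit reducibility of $K^D$ (for the rescaling)---enter essentially.
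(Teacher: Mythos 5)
The paper's own proof is a two-sentence sketch: any coprime tuple can be placed as a row of a matrix over $\mathcal{O}$ with unit determinant, so the shortest lattice vector extends to a basis, and size reduction handles the second HKZ property. It leaves the recursion on projected sublattices and the role of unit reducibility entirely implicit. Your inductive construction fills these gaps, and the base case, the size-reduction loop, the persistence of the $\mu_{k,i}$-minimisations, and the final gcd-clearing observation are all sound.

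However, your third-paragraph verification of the first HKZ condition contains a genuine gap. You argue that the sum $\sum_{i \geq k}\varphi(x_i)\mathbf{b}_i'(i)$ is bounded below by $q_\alpha(\mathbf{b}_l'(l))$ for some $l$ with $x_l \neq 0$, and that these are in turn $\geq q_\alpha(\mathbf{b}_k'(k))$ ``by the minimality built into the construction.'' The first step is fine once one notes that minimising over $\mathcal{A}_l$ makes $\mathbf{b}_l'(l)$ minimal among its unit multiples, so left unit reducibility applies. But the second step asserts that the Gram--Schmidt lengths are nondecreasing, $q_\alpha(\mathbf{b}_l'(l)) \geq q_\alpha(\mathbf{b}_k'(k))$ for $l > k$, and that does not follow from your construction. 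Minimising $q_\alpha(\pi_k(\mathbf{v}))$ over $\mathcal{A}_k$ gives only $q_\alpha(\mathbf{b}_k'(k)) \leq q_\alpha(\pi_k(\mathbf{b}_l'))$; since $\pi_k(\mathbf{b}_l') = \mathbf{b}_l'(l) + \sum_{k \leq j < l}\mu_{l,j}\mathbf{b}_j'(j)$ is generally strictly longer than $\mathbf{b}_l'(l)$, the inequality runs in the wrong direction, and indeed HKZ-reduced bases need not have monotone Gram--Schmidt lengths. The correct finish is precisely the gcd-clearing argument of your last paragraph applied to the projected lattice $\pi_k(\Lambda)$: any nonzero element of $\pi_k(\Lambda)$ equals $\pi_k(\mathbf{w})$ for some $\mathbf{w} = \sum x_i\mathbf{b}_i'$ with $(x_k,\ldots,x_d) \neq \mathbf{0}$; factoring out $g = \gcd(x_k,\ldots,x_d)$ and applying Corollary~\ref{GCD} produces $\mathbf{w}' \in \mathcal{A}_k$ with $\pi_k(\mathbf{w}) = \varphi(g)\pi_k(\mathbf{w}')$, and left unit reducibility (after passing to the unit-minimal representative) gives $q_\alpha(\pi_k(\mathbf{b}_k')) \leq q_\alpha(\pi_k(\mathbf{w}))$. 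Delete the flawed chain and rely solely on this reduction. One caveat worth flagging: this reading requires the paper's first HKZ condition to involve the vectors $\mathbf{b}_i(k)$ rather than the printed $\mathbf{b}_i(i)$, i.e.\ to be a statement about $\pi_k(\Lambda)$; as literally typeset, with the mutually orthogonal vectors $\mathbf{b}_i(i)$, the condition would force nondecreasing Gram--Schmidt lengths and would not in general be satisfiable.
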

\begin{proof}
This holds since, for every set $\{x_1,\dots,x_d\}$ such that $x_i \in \mathcal{O}$, $\gcd(x_1,\dots,x_d)=1$, there exists a matrix in $M_{d \times d}(\mathcal{O})$ with $x_1,\dots,x_d$ in one of the rows whose determinant is a unit, and so the shortest lattice vector can always be extended to a basis for $\Lambda$. The second property can obviously be attained using unimodular transformations to the basis.
\end{proof}
As with classical lattices, HKZ reduced lattices in the algebraic setting exhibit many useful properties.
\begin{theorem}
Let $\Lambda$ be a lattice over $\mathcal{O}$ (that is right-Euclidean) with basis $B=\{\mathbf{b}_1,\dots,\mathbf{b}_d\}$ that is HKZ reduced with respect to the quadratic norm $q_{\alpha}$ for some $\alpha \in \mathcal{P}$. Then, for all $1 \leq k \leq d$, we have
\begin{align*}
    q_{\alpha}(\varphi(\mathbf{b}_k)) \leq \left(1+(k-1)\rho_{\alpha,\mathcal{O}}\right)\lambda_k^2.
\end{align*}
\end{theorem}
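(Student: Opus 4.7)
The plan is to mirror the classical HKZ length bound. Starting from the Gram--Schmidt decomposition $\varphi(\mathbf{b}_k) = \mathbf{b}_k(k) + \sum_{i=1}^{k-1}\mu_{k,i}\mathbf{b}_i(i)$, the orthogonality lemma proved earlier gives
\begin{equation*}
q_\alpha(\varphi(\mathbf{b}_k)) = q_\alpha(\mathbf{b}_k(k)) + \sum_{i=1}^{k-1}q_\alpha(\mu_{k,i}\mathbf{b}_i(i)).
\end{equation*}
Submultiplicativity of the norm together with property 2 of HKZ reduction and the cited lemma of \cite{cdaminimum} bounds each $q_\alpha(\mu_{k,i}) \leq \rho_{\alpha,\mathcal{O}}$, reducing the task to proving $q_\alpha(\mathbf{b}_i(i)) \leq \lambda_i^2$ for every $1 \leq i \leq k$; a final summation using $\lambda_i \leq \lambda_k$ for $i \leq k$ then produces the claimed bound $(1+(k-1)\rho_{\alpha,\mathcal{O}})\lambda_k^2$.

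For this core inequality, let $\mathbf{s}_1, \dots, \mathbf{s}_i \in \Lambda$ be linearly independent vectors realising the first $i$ successive minima. Since $\text{Span}_K(\mathbf{b}_1, \dots, \mathbf{b}_{i-1})$ has $K$-dimension $i-1$, at least one $\mathbf{s}_j$ with $j \leq i$ must lie outside it, so its orthogonal projection onto the complement, call it $\pi_i(\varphi(\mathbf{s}_j))$, is nonzero. Expanding $\mathbf{s}_j = \sum_l \varphi(x_l)\mathbf{b}_l$ and re-expressing this projection in the orthogonal system $\{\mathbf{b}_l(l)\}_{l \geq i}$ yields a nonzero element of the form $\sum_{l \geq i}\varphi(y_l)\mathbf{b}_l(l)$, and HKZ property 1 then gives
\begin{equation*}
q_\alpha(\mathbf{b}_i(i)) \leq q_\alpha(\pi_i(\varphi(\mathbf{s}_j))) \leq q_\alpha(\varphi(\mathbf{s}_j)) = \lambda_j^2 \leq \lambda_i^2,
\end{equation*}
where the middle inequality uses contraction of $q_\alpha$ under orthogonal projection.

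The delicate step is this last application of HKZ property 1: as written, the property minimises $q_\alpha$ only over \emph{$\mathcal{O}$-coefficient} combinations $\sum \varphi(x_l)\mathbf{b}_l(l)$, but the coefficients $y_l$ produced by the Gram--Schmidt re-expansion of $\pi_i(\varphi(\mathbf{s}_j))$ generally lie in $K_\mathbb{R}$ rather than in $\varphi(\mathcal{O})$. Reconciling this---either by arguing that the two minima coincide via a Babai-style rounding exploiting the size-reduction condition and the right-Euclidean hypothesis, or by re-interpreting property 1 directly in terms of the projected sublattice and invoking the primitive-system machinery of Proposition \ref{primitive} to replace $K_\mathbb{R}$-coefficients by $\mathcal{O}$-coefficients without inflating the norm---is the main obstacle, and is precisely where the right-Euclidean and unit-reducibility hypotheses must come into play.
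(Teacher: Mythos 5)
Your outline reproduces the paper's proof almost verbatim: the orthogonal decomposition of $q_\alpha(\varphi(\mathbf{b}_k))$, the bound $q_\alpha(\mu_{k,i})\leq\rho_{\alpha,\mathcal{O}}$ from the size-reduction condition, the reduction to $q_\alpha(\mathbf{b}_i(i))\leq\lambda_i^2$, and the projection argument applied to whichever $\mathbf{s}_j$ escapes $\operatorname{Span}_K(\mathbf{b}_1,\dots,\mathbf{b}_{i-1})$. So the approach is the same.

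The ``delicate step'' you flag, however, is not the obstacle you think it is, and you should not expect the right-Euclidean or unit-reducibility hypotheses to be the cure (those hypotheses are used only to guarantee that an HKZ basis \emph{exists}, not in this length estimate). The mismatch you identified arises only because you re-expanded $\pi_i(\varphi(\mathbf{s}_j))$ in the Gram--Schmidt system $\{\mathbf{b}_l(l)\}$, where the coefficients are indeed in $K_{\mathbb{R}}$. Instead, expand it in the projected basis $\{\pi_i(\varphi(\mathbf{b}_l))\}_{l\geq i}$: since $\pi_i$ kills $\varphi(\mathbf{b}_1),\dots,\varphi(\mathbf{b}_{i-1})$ and is $K_\mathbb{R}$-linear, writing $\mathbf{s}_j=\sum_l x_l\mathbf{b}_l$ with $x_l\in\mathcal{O}$ gives $\pi_i(\varphi(\mathbf{s}_j))=\sum_{l\geq i}\varphi(x_l)\,\pi_i(\varphi(\mathbf{b}_l))$ with genuine $\mathcal{O}$-coefficients, and this is nonzero because $\mathbf{s}_j\notin\operatorname{Span}_K(\mathbf{b}_1,\dots,\mathbf{b}_{i-1})$. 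The first HKZ bullet should be read as a shortest-vector condition on this projected lattice (compare the BKZ definition, where the paper writes $\mathbf{b}_i(j)$ rather than $\mathbf{b}_i(i)$ for exactly this reason; the $\mathbf{b}_i(i)$ in the HKZ bullet appears to be a slip). Under that reading the inequality $q_\alpha(\mathbf{b}_i(i))\leq q_\alpha(\pi_i(\varphi(\mathbf{s}_j)))$ is immediate, no Babai rounding or Proposition \ref{primitive} is needed, and your proof closes exactly as the paper's does.
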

\begin{proof}
Let $\mathbf{s}_j$ be the lattice vector such that $q_\alpha(\varphi(\mathbf{s}_j))=\lambda_j^2$, $1 \leq j \leq d$. Denote by $\mathbf{s}_j(k) :=\varphi(\mathbf{s}_j)-\sum_{i=1}^{k-1}\langle \varphi(\mathbf{s}_j),\mathbf{b}_i(i) \rangle_{\alpha} (\langle \mathbf{b}_i(i), \mathbf{b}_i(i) \rangle_{\alpha})^{-1} \mathbf{b}_i(i)$, i.e. the vector $\varphi(\mathbf{s}_j)$ after being orthogonalised to the vectors $\{\varphi(\mathbf{b}_1),\varphi(\mathbf{b}_2),\dots, \varphi(\mathbf{b}_{k-1})\}$ with respect to the quadratic norm $q_{\alpha}$. Since the vectors $\mathbf{s}_1,\dots,\mathbf{s}_d$ are linearly independent over $K$, we must have that at least one of $\mathbf{s}_1(k),\mathbf{s}_2(k),\dots,\mathbf{s}_k(k)$ is nonzero. Moreover, since $\Lambda$ is HKZ reduced, we must have $q_{\alpha}(\mathbf{b}_k(k)) \leq q_{\alpha}(\mathbf{s}_j(k)) \leq \lambda_k^2$. Using the notation as in (\ref{orthogonal}) and the fact that $q_{\alpha}(\mu_{i,j}) \leq \rho_{\alpha,\mathcal{O}}$ for all $1 \leq j < i \leq d$,
\begin{align*}
    &q_{\alpha}(\varphi(\mathbf{b}_k)) =q_{\alpha}\left(\mathbf{b}_k(k)+\sum_{i=1}^{k-1}\mu_{k,i}\mathbf{b}_i(i)\right)=\sum_{i=1}^{k-1}q_\alpha(\mu_{k,i}\mathbf{b}_i(i))+q_{\alpha}(\mathbf{b}_k(k))
    \\& \leq \sum_{i=1}^{k-1}q_\alpha(\mu_{k,i})q_{\alpha}(\mathbf{b}_i(i))+q_{\alpha}(\mathbf{b}_k(k)) \leq \rho_{\alpha,\mathcal{O}}\sum_{i=1}^{k-1}\lambda_i^2+\lambda_k^2
    \\& \leq \left(1+(k-1)\rho_{\alpha,\mathcal{O}}\right)\lambda_k^2,
\end{align*}
as required.
\end{proof}
Tighter bounds can be given when $K$ is imaginary quadratic or a rational quaternion field.
\begin{theorem}
Let $K$ be an imaginary quadratic field or rational quaternion field with Euclidean minimum $\mathcal{M}(K)$ and maximal order $\mathcal{O}$. Let $\Lambda$ be a lattice spanned over $\mathcal{O}$ with basis $B=\{\mathbf{b}_1,\dots,\mathbf{b}_d\}$ and assume that $\Lambda$ is HKZ reduced. Then for all $1 \leq k \leq d$,
\begin{align*}
    q(\mathbf{b}_k) \leq \left(1+(k-1)\mathcal{M}(K)\right)\lambda_k^2,
\end{align*}
where $q$ is defined as in \ref{minkowskiquaternion}.
\end{theorem}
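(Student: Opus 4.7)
The plan is to mirror the proof of the preceding HKZ bound, exploiting the fact that for imaginary quadratic and rational quaternion fields the quadratic norm $q$ is exactly multiplicative, i.e. $q(x\mathbf{v})=\mathrm{nrd}_{K/\mathbb{Q}}(x)\,q(\mathbf{v})$ for all $x\in K_{\mathbb{R}}$ and $\mathbf{v}\in K_{\mathbb{R}}^D$, and to replace the generic rounding constant $\rho_{\alpha,\mathcal{O}}$ by the Euclidean minimum $\mathcal{M}(K)$. Concretely, for $x\in K_{\mathbb{R}}$ and $y\in\mathcal{O}$ one has $q(x-\varphi(y))=\mathrm{nrd}_{K/\mathbb{Q}}(x-\varphi(y))$, so the Euclidean minimum gives exactly $\min_{y\in\mathcal{O}} q(\mu_{k,i}-\varphi(y))\leq \mathcal{M}(K)$ for any $\mu_{k,i}\in K_{\mathbb{R}}$.

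First, I would fix $1\leq k\leq d$ and use the first HKZ property to bound the orthogonalised length $q(\mathbf{b}_k(k))$. Let $\mathbf{s}_1,\dots,\mathbf{s}_d$ be the lattice vectors realising the successive minima, and denote by $\mathbf{s}_j(k)$ the Gram-Schmidt projection of $\varphi(\mathbf{s}_j)$ onto the orthogonal complement of $\mathrm{Span}_{K_{\mathbb{R}}}\{\mathbf{b}_1(1),\dots,\mathbf{b}_{k-1}(k-1)\}$. As in the preceding theorem, linear independence of $\mathbf{s}_1,\dots,\mathbf{s}_k$ over $K$ forces at least one $\mathbf{s}_j(k)$, $1\leq j\leq k$, to be nonzero; the first HKZ condition then yields $q(\mathbf{b}_k(k))\leq q(\mathbf{s}_j(k))\leq q(\varphi(\mathbf{s}_j))\leq\lambda_k^2$, where the middle inequality uses that orthogonal projection cannot increase the quadratic norm.

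Next, I would apply the second HKZ condition together with the definition of the Euclidean minimum. Since $q(\mu_{k,i}-\varphi(y))=\mathrm{nrd}_{K/\mathbb{Q}}(\mu_{k,i}-\varphi(y))$ in the imaginary quadratic or rational quaternion setting, the minimality of $q(\mu_{k,i})$ over all $y\in\mathcal{O}$ combined with the existence of some $y$ with $\mathrm{nrd}_{K/\mathbb{Q}}(\mu_{k,i}-\varphi(y))\leq\mathcal{M}(K)$ gives $q(\mu_{k,i})\leq\mathcal{M}(K)$ for every $1\leq i<k$.

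Finally, I would assemble the pieces using orthogonality of the Gram-Schmidt decomposition (Lemma following equation (\ref{orthogonal})) and the exact multiplicativity of $q$:
\begin{align*}
q(\varphi(\mathbf{b}_k)) &= q\!\left(\mathbf{b}_k(k)+\sum_{i=1}^{k-1}\mu_{k,i}\mathbf{b}_i(i)\right) = q(\mathbf{b}_k(k))+\sum_{i=1}^{k-1} q(\mu_{k,i}\mathbf{b}_i(i)) \\
&= q(\mathbf{b}_k(k))+\sum_{i=1}^{k-1}\mathrm{nrd}_{K/\mathbb{Q}}(\mu_{k,i})\,q(\mathbf{b}_i(i)) \leq \lambda_k^2+\mathcal{M}(K)\sum_{i=1}^{k-1}\lambda_i^2 \leq \bigl(1+(k-1)\mathcal{M}(K)\bigr)\lambda_k^2,
\end{align*}
where the last step uses $\lambda_i\leq\lambda_k$ for $i\leq k$. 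No step here is a real obstacle: the only subtlety is verifying that the multiplicative identity $q(x\mathbf{v})=\mathrm{nrd}_{K/\mathbb{Q}}(x)q(\mathbf{v})$ truly holds in the Gram-Schmidt step, which is immediate from $q(\mathbf{v})=\sum v_i v_i^*$ and the fact that $x\in K_{\mathbb{R}}$ commutes with the scalar $\mathrm{nrd}_{K/\mathbb{Q}}(x)\in\mathbb{R}$ that arises from $xv_iv_i^*x^*$ via the norm identity in these special algebras.
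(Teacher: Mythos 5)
Your proof is correct and takes essentially the same route the paper intends: the paper omits the proof with the remark that it ``follows identically to the general case,'' and your write-up simply carries out that general HKZ argument while replacing the rounding constant $\rho_{\alpha,\mathcal{O}}$ by $\mathcal{M}(K)$, using the exact multiplicativity $q(x\mathbf{v})=\mathrm{nrd}_{K/\mathbb{Q}}(x)\,q(\mathbf{v})$ in the imaginary quadratic/rational quaternion setting (the same fact already invoked in the proof of Theorem~\ref{minkowskiquaternion}).
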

The proof of this follows identically to the general case, so we omit it here. Note that for the field $K=\mathbb{Q}(\sqrt{-3})$, letting $\kappa_i=\frac{q(\mathbf{b}_i)}{\lambda_i^2}$, using our method we have $\kappa_i \leq 1+\frac{i-1}{3}=\frac{i+2}{3}$. For classical (real) lattices, the upper bound for the $\kappa_i$ is given by $\frac{i+3}{4}$. However, under the canonical embedding a $d$-dimensional lattice over $\mathcal{O}$ corresponds to a $2d$-dimensional real lattice, and as such the upper bound for the basis vector lengths under the canonical embedding would yield $\kappa_i \leq \frac{(2i-1)+1}{4}=\frac{i}{2}$, and so the bound we have proven is clearly sharper.
\subsection{Upper bounds for the successive minima and covering radius of algebraic lattices}
\begin{definition}
Let $\Lambda$ be a lattice of dimension $d$ spanned over an order $\mathcal{O}$ of $K$ with basis $B$. We define the \emph{$\alpha$--dual} of $\Lambda$ for some $\alpha \in \mathcal{P}$ as the set $\{\mathbf{v} \in \text{Span}_K(B): q_\alpha(\varphi(\mathbf{v}),\varphi(\mathbf{w})) \in \mathbb{Z}, \forall \mathbf{w} \in \Lambda\}$.
\end{definition}
Since the function $q_\alpha(\cdot,\cdot)$ is linear over addition, it is clear by a similar argument to the classical case that the $\alpha$--dual of a lattice is also a lattice. We denote the $\alpha$--dual of $\Lambda$ by $\Lambda_\alpha^*$. We also write $\lambda_i(\Lambda)$ to mean the $i$th successive minima of $\Lambda$.
\begin{lemma}
For any $\mathcal{O}$-lattice $\Lambda$ with basis $B$, under the map $\varphi$ the lattice $\Lambda_{\alpha}^*$ has the representation
\begin{align*}
    \varphi(\Lambda_\alpha^*)=\varphi(\mathcal{D}(\mathcal{O}/\mathbb{Z})^{-1})^d(\varphi(B)\varphi(B)^\dagger)^{-1}\varphi(B)\varphi(\alpha^{-1}),
\end{align*}
where $\mathcal{D}(\mathcal{O}/\mathbb{Z})$ denotes the different of $\mathcal{O}$ over $\mathbb{Z}$ and $\dagger$ denotes the Hermitian conjugate of a matrix with respect to the canonical conjugate previously defined. Moreover, we have
\begin{align*}
    \det_{\alpha}(\Lambda_{\alpha}^*)=\text{disc}(\mathcal{O}/\mathbb{Z})^{-2d}\det_{\alpha}(\Lambda)^{-1},
\end{align*}
and
\begin{align*}
    (\Lambda_{\alpha}^*)_{\alpha}^*=\Lambda.
\end{align*}
\end{lemma}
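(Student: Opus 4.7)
The plan is to work concretely with the Gram matrix of $B$. Let $M = \varphi(B)$ denote the matrix whose $i$th row is $\varphi(\mathbf{b}_i)$, and let $G = M \varphi(\alpha) M^\dagger$ be the associated Gram matrix. Any element of $\mathrm{Span}_K(B)$ embeds as $y M$ for some row vector $y \in K_\mathbb{R}^d$, and the pairing unfolds to $q_\alpha(yM, xM) = \mathrm{tr}_{K/\mathbb{Q}}(y\, G\, x^\dagger)$ for $x \in \mathcal{O}^d$. Throughout, I use that $\varphi(\alpha) = \varphi(\alpha)^*$ and that $\alpha$ is central so it can be transported past matrices, together with the defining property of the codifferent, $\varphi(\mathcal{D}(\mathcal{O}/\mathbb{Z})^{-1}) = \{z \in K_\mathbb{R} : \mathrm{tr}_{K/\mathbb{Q}}(z\, \varphi(a)^*) \in \mathbb{Z}\ \forall a \in \mathcal{O}\}$ (invariance of the reduced trace under $*$ is used here).

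For the explicit representation, I would impose the dual condition against each basis element separately. Taking $\mathbf{w} = \mathbf{b}_k$ forces the $k$th component of the row vector $yG$ to lie in $\varphi(\mathcal{D}(\mathcal{O}/\mathbb{Z})^{-1})$; conversely, any such $y$ clearly gives a valid dual element by $\mathcal{O}$-linearity of $\mathbf{w} \mapsto q_\alpha(\mathbf{v},\mathbf{w})$. Hence $y \in \varphi(\mathcal{D}(\mathcal{O}/\mathbb{Z})^{-1})^d \cdot G^{-1}$, and inverting $G = M \varphi(\alpha) M^\dagger$ to $G^{-1} = \varphi(\alpha^{-1})(MM^\dagger)^{-1}$ and then multiplying by $M$ on the right yields
\begin{align*}
    \varphi(\Lambda_\alpha^*) = \varphi(\mathcal{D}(\mathcal{O}/\mathbb{Z})^{-1})^d (MM^\dagger)^{-1} M \varphi(\alpha^{-1}),
\end{align*}
which is the claimed formula.

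For the determinant identity, I would compute $\det_\alpha(\Lambda_\alpha^*)$ from this explicit representation. A natural (non-reduced) basis matrix for $\Lambda_\alpha^*$ is obtained by taking a $\mathbb{Z}$-basis of $\mathcal{D}(\mathcal{O}/\mathbb{Z})^{-1}$ and right-multiplying by $(MM^\dagger)^{-1}M\varphi(\alpha^{-1})$. Evaluating $q_\alpha$ on such a basis produces a Gram matrix which, up to a factor controlled by the codifferent, is the inverse of $G$. Running this through the place-by-place definition of $\det_\alpha$, each of the $r$ real and $s$ complex places contributes a factor of $\det(\mathcal{F}_i)^{-1}$ (or squared for complex places), while the codifferent contributes a factor of $\mathrm{disc}(\mathcal{O}/\mathbb{Z})^{-2}$ per basis element; assembling these yields $\mathrm{disc}(\mathcal{O}/\mathbb{Z})^{-2d}\det_\alpha(\Lambda)^{-1}$.

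For the double-dual identity, the inclusion $\Lambda \subseteq (\Lambda_\alpha^*)_\alpha^*$ is immediate from the symmetry of the integrality requirement (using $\mathrm{tr}(z) = \mathrm{tr}(z^*)$). The reverse inclusion can then be obtained cleanly by the determinant formula: applying the second claim twice gives $\det_\alpha((\Lambda_\alpha^*)_\alpha^*) = \det_\alpha(\Lambda)$, and two full-rank lattices with the same determinant and one containing the other must be equal. The main obstacle I anticipate is bookkeeping around the involution $*$ and the placement of $\alpha$ and $\mathcal{D}(\mathcal{O}/\mathbb{Z})^{-1}$, in particular ensuring that the codifferent is stable under $*$ and that $\alpha$ can be freely moved to the right of all matrix expressions without altering the module. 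Once these commutation facts are in hand, all three claims reduce to linear algebra over $K_\mathbb{R}$ together with the standard trace-duality characterisation of the codifferent.
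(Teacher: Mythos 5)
The paper's own proof consists of a single sentence asserting that the lemma ``follows from a direct calculation, since $\text{nrd}_{K/\mathbb{Q}}(\mathcal{D}(\mathcal{O}/\mathbb{Z}))=\text{disc}(\mathcal{O}/\mathbb{Z})$,'' and your proposal is precisely that direct calculation spelled out: rewrite the pairing through the Gram matrix $G = M\varphi(\alpha)M^\dagger$, invoke the trace characterisation of the codifferent to identify the dual as $\varphi(\mathcal{D}(\mathcal{O}/\mathbb{Z})^{-1})^d G^{-1} M$, and push $\text{nrd}(\mathcal{D}) = \text{disc}$ through the place-by-place determinant. So the route is the same as the paper's intended one, and your outline reaches the right formula.

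Two small points worth tightening. First, you lean on ``$\alpha$ is central so it can be transported past matrices'' to rewrite $G^{-1}$ as $\varphi(\alpha^{-1})(MM^\dagger)^{-1}$; the paper only assumes $\varphi(\alpha)=\varphi(\alpha)^*$, and ``$*$-symmetric implies central'' is specific to the commutative and quaternion cases ($m\le 2$), which is the regime the paper actually works in but is not stated. In the square case $d=D$ the final expression is correct regardless, since $G^{-1}M = (M^\dagger)^{-1}\varphi(\alpha^{-1}) = (MM^\dagger)^{-1}M\varphi(\alpha^{-1})$ with no commutation needed, so it would be cleaner to compute $G^{-1}M$ directly rather than factor $G^{-1}$ first. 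Second, to place the $k$th entry of $yG$ in the codifferent you need to test the integrality condition against all $\mathcal{O}$-multiples $a\mathbf{b}_k$, $a\in\mathcal{O}$, not merely the basis vector $\mathbf{b}_k$ itself; as written, testing only against $\mathbf{b}_k$ gives $\text{tr}((yG)_k)\in\mathbb{Z}$, which is weaker. Both are easy repairs and do not change the structure of the argument.
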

\begin{proof}
This follows from a direct calculation, since $\text{nrd}_{K/\mathbb{Q}}(\mathcal{D}(\mathcal{O}/\mathbb{Z}))=\text{disc}(\mathcal{O}/\mathbb{Z})$.
\end{proof}
\begin{theorem}
Let $\Lambda$ be a lattice over $\mathcal{O}$ (that is right-Euclidean) with basis $B=\{\mathbf{b}_1,\dots,\mathbf{b}_d\}$, and let $\lambda_i(\Lambda)$ denote the $i$th successive minima of $\Lambda$ with respect to the quadratic norm $q_\alpha$, for some $\alpha \in \mathcal{P}$. Denote by $\mu(\Lambda)=\max_{\mathbf{x} \in \text{Span}_K(B)}\min_{\mathbf{y} \in \Lambda}q_\alpha(\varphi(\mathbf{x}-\mathbf{y}))$. Then for all $1 \leq k \leq d$,
\begin{align*}
    \lambda_k(\Lambda)^2\lambda_{d-k+1}(\Lambda_\alpha^*)^2 &\leq \left(\frac{(2d-k+1)k}{2}\rho_{\alpha,\mathcal{O}}^2-\rho_{\alpha,\mathcal{O}}+1\right){\gamma_{\alpha,\mathcal{O},d}^\nu}^2\text{disc}(\mathcal{O}/\mathbb{Z})^{\frac{-2}{nm^2}},
    \\\mu(\Lambda)^2\lambda_1(\Lambda_\alpha^*)^2 &\leq \rho_{\alpha,\mathcal{O}}\sum_{i=1}^d{\gamma_{\alpha,\mathcal{O},i}^\nu}^2\text{disc}(\mathcal{O}/\mathbb{Z})^{\frac{-2}{nm^2}},
\end{align*}
where $\gamma_{\alpha,\mathcal{O},d}^\nu=\max_{i \in \{1,\dots,d\}}\gamma_{\alpha,\mathcal{O},i}$.
\end{theorem}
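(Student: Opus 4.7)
The plan is to prove both inequalities simultaneously from a single HKZ reduced basis of $\Lambda$, together with the explicit description of $\Lambda_\alpha^*$ from the preceding lemma. The approach mirrors the classical Mahler--Banaszczyk transference arguments, but adapted so that the bilinear form $q_\alpha$ and the canonical involution $*$ play the roles of the Euclidean inner product and transposition.

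First I would fix an HKZ reduced basis $B=\{\mathbf{b}_1,\dots,\mathbf{b}_d\}$ of $\Lambda$ (which exists by the proposition in the previous subsection), together with its Gram--Schmidt decomposition $\mathbf{b}_i(i),\mu_{i,j}$ as in equation~(\ref{orthogonal}). Using the dual representation $\varphi(\Lambda_\alpha^*)=\varphi(\mathcal{D}(\mathcal{O}/\mathbb{Z})^{-1})^d(\varphi(B)\varphi(B)^\dagger)^{-1}\varphi(B)\varphi(\alpha^{-1})$ from the preceding lemma, a direct computation of the Gram--Schmidt process on the reversed columns produces a basis $B^*=\{\mathbf{b}_1^*,\dots,\mathbf{b}_d^*\}$ of $\Lambda_\alpha^*$ satisfying the identity
\[
q_\alpha\bigl(\mathbf{b}^*_{d-k+1}(d-k+1)\bigr)\;=\;\text{disc}(\mathcal{O}/\mathbb{Z})^{-2/(nm^2)}\,q_\alpha\bigl(\mathbf{b}_k(k)\bigr)^{-1},
\]
the algebraic analogue of $\|\mathbf{b}^*_{d-k+1}(d-k+1)\|=\|\mathbf{b}_k(k)\|^{-1}$ in the real case, the $\text{disc}(\mathcal{O}/\mathbb{Z})^{-2/(nm^2)}$ factor arising from the inverse different. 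I would then additionally HKZ-reduce $B^*$ (which only shortens its basis vectors) so that the HKZ theorem of the previous subsection applies to both $B$ and $B^*$.

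For the first inequality, the HKZ property gives $q_\alpha(\mathbf{b}_k(k))\le \lambda_k(\Lambda)^2$ and $q_\alpha(\mathbf{b}^*_{d-k+1}(d-k+1))\le \lambda_{d-k+1}(\Lambda_\alpha^*)^2$, while expanding $\mathbf{b}_k$ in its Gram--Schmidt decomposition and bounding $q_\alpha(\mu_{k,j})\le \rho_{\alpha,\mathcal{O}}$ term-by-term gives
\[
\lambda_k(\Lambda)^2 \le q_\alpha(\mathbf{b}_k(k))+\rho_{\alpha,\mathcal{O}}\sum_{i=1}^{k-1}q_\alpha(\mathbf{b}_i(i)),
\]
and symmetrically for $\lambda_{d-k+1}(\Lambda_\alpha^*)^2$. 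Multiplying the two expansions, substituting the duality identity above to eliminate the dual Gram--Schmidt lengths, and bounding each surviving ratio $q_\alpha(\mathbf{b}_i(i))/q_\alpha(\mathbf{b}_j(j))$ by $(\gamma_{\alpha,\mathcal{O},d}^\nu)^2\text{disc}(\mathcal{O}/\mathbb{Z})^{-2/(nm^2)}$ via the Hermite constant applied to appropriate projected sublattices, the doubly-indexed sum of cross terms telescopes through the identity $\sum_{i=1}^{k}(d-i+1)=k(2d-k+1)/2$ to yield exactly $\tfrac{(2d-k+1)k}{2}\rho_{\alpha,\mathcal{O}}^2$. The $-\rho_{\alpha,\mathcal{O}}+1$ comes from the diagonal (non-cross) terms after one accounts for the cancellation between the $i=k$ term on one side and the $j=d-k+1$ term on the other.

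For the second inequality I would bound the covering radius directly. Given any $\mathbf{x}\in\text{Span}_K(B)$, write $\varphi(\mathbf{x})=\sum_{i=1}^d c_i \mathbf{b}_i(i)$ and iterate from $i=d$ down to $i=1$, at each step applying the lemma yielding the constant $\rho_{\alpha,\mathcal{O}}$ to replace $c_i$ by an $\mathcal{O}$-coefficient modulo an error of $q_\alpha$-norm at most $\rho_{\alpha,\mathcal{O}}$; summing the $d$ Gram--Schmidt contributions, this gives
\[
\mu(\Lambda)\;\le\;\rho_{\alpha,\mathcal{O}}\sum_{i=1}^d q_\alpha(\mathbf{b}_i(i)).
\]
Bounding each $q_\alpha(\mathbf{b}_i(i))$ by the Hermite constant $(\gamma_{\alpha,\mathcal{O},i}^\nu)^2$ applied to the projected sublattice of dimension $i$ and multiplying through by $\lambda_1(\Lambda_\alpha^*)^2$, the determinant identity $\det_\alpha(\Lambda_\alpha^*)=\text{disc}(\mathcal{O}/\mathbb{Z})^{-2d}\det_\alpha(\Lambda)^{-1}$ collapses the mixed determinant factors to $\text{disc}(\mathcal{O}/\mathbb{Z})^{-2/(nm^2)}$, giving the stated bound.

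The hard part will be the exact bookkeeping in the first inequality: a naive multiplication of HKZ upper bounds for $\lambda_k$ and $\lambda_{d-k+1}^*$ gives the cruder coefficient $(1+(k-1)\rho_{\alpha,\mathcal{O}})(1+(d-k)\rho_{\alpha,\mathcal{O}})$, and extracting the sharper $\tfrac{(2d-k+1)k}{2}\rho_{\alpha,\mathcal{O}}^2-\rho_{\alpha,\mathcal{O}}+1$ requires that the Gram--Schmidt cross terms in $\Lambda$ be paired, via the duality identity, with their reciprocal counterparts in $\Lambda_\alpha^*$ so that each product appears with its correct multiplicity. Tracking the inverse-different $\text{disc}(\mathcal{O}/\mathbb{Z})^{-2/(nm^2)}$ factor through the Hermite bounds, and verifying that it appears exactly to the first power in the final estimate rather than being squared, is the subtle step that has no direct analogue in the classical real setting.
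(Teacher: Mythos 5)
Your approach contains a structural contradiction that breaks the argument. You first extract a basis $B^*$ of $\Lambda_\alpha^*$ by reversing and dualizing the HKZ basis $B$, and you lean on the Gram--Schmidt duality identity
\[
q_\alpha\bigl(\mathbf{b}^*_{d-k+1}(d-k+1)\bigr)=\text{disc}(\mathcal{O}/\mathbb{Z})^{-2/(nm^2)}\,q_\alpha\bigl(\mathbf{b}_k(k)\bigr)^{-1}.
\]
That identity (assuming it transfers cleanly to this setting, which itself needs justification because of the extra $\varphi(\alpha^{-1})$ and $\varphi(\mathcal{D}(\mathcal{O}/\mathbb{Z})^{-1})$ factors) holds for the specific reversed dual basis $B^*$ computed from $B$. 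But you then say you will ``additionally HKZ-reduce $B^*$'' so that the HKZ inequalities apply on the dual side. These two steps are incompatible: once you replace $B^*$ by an HKZ-reduced basis of $\Lambda_\alpha^*$, the Gram--Schmidt vectors no longer have any fixed relationship to those of $B$, and the duality identity you rely on to ``eliminate the dual Gram--Schmidt lengths'' is lost. Conversely, if you keep the reversed dual basis, it will generically not be HKZ reduced (nor size-reduced), so the bounds $q_\alpha(\mathbf{b}^*_j(j))\le\lambda_j(\Lambda_\alpha^*)^2$ and $q_\alpha(\mu^*_{k,j})\le\rho_{\alpha,\mathcal{O}}$ that you invoke are unavailable. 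A second, independent problem is the step where you propose to bound individual ratios $q_\alpha(\mathbf{b}_i(i))/q_\alpha(\mathbf{b}_j(j))$ by ${\gamma_{\alpha,\mathcal{O},d}^\nu}^2\text{disc}(\mathcal{O}/\mathbb{Z})^{-2/(nm^2)}$ ``via the Hermite constant applied to appropriate projected sublattices.'' Hermite constants control products of the form $\lambda_1(\Lambda)^2\lambda_1(\Lambda^*_\alpha)^2$; they do not bound arbitrary ratios of Gram--Schmidt norms within a single lattice, and this is where the quantity $\text{disc}(\mathcal{O}/\mathbb{Z})^{-2/(nm^2)}$ actually enters in the paper's argument.

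The paper's mechanism is genuinely different and avoids both pitfalls: it never invokes a Gram--Schmidt duality identity. Instead it defines the projected sublattices $\Lambda^k$ spanned by $\{\mathbf{b}_k(k),\dots,\mathbf{b}_d(d)\}$ and observes the containment $(\Lambda^k)^*_\alpha\subseteq\varphi(\Lambda_\alpha^*)$, which gives $\lambda_j(\Lambda_\alpha^*)\le\lambda_j((\Lambda^k)^*_\alpha)$ by the monotonicity of successive minima. Then for each term in the Gram--Schmidt expansion of $q_\alpha(\varphi(\mathbf{b}_k))$, the factor $q_\alpha(\mathbf{b}_i(i))\cdot\lambda_{d-i+1}((\Lambda^i)^*_\alpha)^2$ is controlled by the transference bound $\lambda_1(\Lambda^i)^2\lambda_1((\Lambda^i)^*_\alpha)^2\le\gamma_{\alpha,\mathcal{O},d-i+1}^2\text{disc}(\mathcal{O}/\mathbb{Z})^{-2/(nm^2)}$ together with the previously established $\lambda_1\lambda_{d-i+1}^*$-estimate applied to each projected sublattice (their equation~\eqref{lambdainequality}). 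The explicit coefficient $\tfrac{(2d-k+1)k}{2}\rho_{\alpha,\mathcal{O}}^2-\rho_{\alpha,\mathcal{O}}+1$ emerges by summing these per-term bounds and simplifying under the assumption $\rho_{\alpha,\mathcal{O}}\ge 1$, not from a telescoping of the kind you describe. For the covering radius inequality, the paper also proceeds differently: by induction on $d$, splitting an arbitrary $\mathbf{x}$ into a component in $\text{Span}_K(\mathbf{b}_1)$ (handled by the Euclidean minimum $\rho_{\alpha,\mathcal{O}}$) and a component handled by the inductive hypothesis for the sublattice spanned by $\{\mathbf{b}_2,\dots,\mathbf{b}_d\}$, with the transference bound applied at each stage. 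Your direct bound $\mu(\Lambda)\le\rho_{\alpha,\mathcal{O}}\sum_i q_\alpha(\mathbf{b}_i(i))$ is correct, but converting it into the product bound against $\lambda_1(\Lambda_\alpha^*)^2$ still requires the projected-sublattice transference inequality term by term, and the sentence about ``the determinant identity collapsing the mixed determinant factors'' glosses over this; the paper's induction makes that step explicit.
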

\begin{proof}
Suppose $B=\{\mathbf{b}_1,\dots,\mathbf{b}_d\}$ is a basis for $\Lambda$ that is HKZ reduced. Denote by $\varphi(\Lambda)$ the set of all vectors in $\Lambda$ under the map $\varphi$. Let us define by $\Lambda^k$ the set of vectors $\sum_{i=k}^d\varphi(x_i)\mathbf{b}_i(i) \in K_{\mathbb{R}}^D$, for $x_i \in \mathcal{O}$. Then, it is clear that $\left(\Lambda^{k}\right)_{\alpha}^*$ is a subset of $\varphi(\Lambda_\alpha^*)$ for all $1 \leq k \leq d$, and so we must have $\lambda_1(\varphi(\Lambda_{\alpha}^*)) \leq \lambda_1\left(\left(\Lambda^{k}\right)_{\alpha}^*\right)$, for all $1 \leq k \leq d$, with respect to the norm induced by $q_\alpha$. Moreover, for any $d$-dimensional lattice $\Lambda$ spanned over the order $\mathcal{O}$, it holds that 
\begin{align*}
\lambda_1(\Lambda)^2\lambda_1(\Lambda_\alpha^*)^2 \leq \gamma_{\alpha,\mathcal{O},d}\det_\alpha(\Lambda)^{\frac{1}{dnm^2}}\gamma_{\alpha,\mathcal{O},d}\det_{\alpha}(\Lambda_\alpha^*)^{\frac{1}{dnm^2}}=\gamma_{\alpha,\mathcal{O},d}^2\text{disc}(\mathcal{O}/\mathbb{Z})^{\frac{-2}{nm^2}}.
\end{align*}
Hence, since $B$ is HKZ reduced,
\begin{align*}
     &q_{\alpha}(\varphi(\mathbf{b}_k))\lambda_1(\Lambda_{\alpha}^*)^2=\sum_{i=1}^{k-1}q_\alpha(\mu_{k,i}\mathbf{b}_i(i))\lambda_1(\Lambda_\alpha^*)^2+q_{\alpha}(\mathbf{b}_k(k))\lambda_1(\Lambda_\alpha^*)^2
    \\& \leq \rho_{\alpha,\mathcal{O}}\sum_{i=1}^{k-1}\lambda_1(\Lambda^i)^2\lambda_1((\Lambda^i)_{\alpha}^*)^2+\lambda_1(\Lambda^k)^2\lambda_1((\Lambda^k)_\alpha^*)^2
    \\&\leq \left(\rho_{\alpha,\mathcal{O}}(k-1)+1\right){\gamma_{\alpha,\mathcal{O},d}^\nu}^2\text{disc}(\mathcal{O}/\mathbb{Z})^{\frac{-2}{nm^2}}.
\end{align*}
It therefore also holds that 
\begin{equation}\label{lambdainequality}
    \lambda_k(\Lambda)^2 \lambda_1(\Lambda_{\alpha}^*)^2 \leq \left(\rho_{\alpha,\mathcal{O}}(k-1)+1\right){\gamma_{\alpha,\mathcal{O},d}^\nu}^2\text{disc}(\mathcal{O}/\mathbb{Z})^{\frac{-2}{nm^2}},
\end{equation}
since $\lambda_i(\Lambda)^2\leq \max_{1 \leq i \leq k}q_{\alpha}(\varphi(\mathbf{b}_i))$. Since we must also have $\lambda_{k}(\varphi(\Lambda_\alpha^*)) \leq \lambda_{k}((\Lambda^{j})_{\alpha}^*)$ for all $1 \leq j \leq d$,$k \leq d-j+1$, by the above, it holds that
\begin{align*}
    &q_{\alpha}(\varphi(\mathbf{b}_k))\lambda_{d-k+1}(\Lambda_{\alpha}^*)^2 =\sum_{i=1}^{k-1}q_\alpha(\mu_{k,i}\mathbf{b}_i(i))\lambda_{d-k+1}(\Lambda_\alpha^*)^2+q_{\alpha}(\mathbf{b}_k(k))\lambda_{d-k+1}(\Lambda_\alpha^*)^2
    \\& \leq \rho_{\alpha,\mathcal{O}}\sum_{i=1}^{k-1}\lambda_1(\Lambda^i)^2\lambda_{d-i+1}((\Lambda^i)_{\alpha}^*)^2+\lambda_1(\Lambda^k)^2\lambda_{d-k+1}((\Lambda^k)_\alpha^*)^2
    \\&\leq \rho_{\alpha,\mathcal{O}}  \sum_{i=1}^{k-1}\left(\rho_{\alpha,\mathcal{O}}(d-i)+1\right){\gamma_{\alpha,\mathcal{O},d-i+1}^\nu}^2\text{disc}(\mathcal{O}/\mathbb{Z})^{\frac{-2}{nm^2}}+\left(\rho_{\alpha,\mathcal{O}}(d-k)+1\right){\gamma_{\alpha,\mathcal{O},d-k+1}^\nu}^2\text{disc}(\mathcal{O}/\mathbb{Z})^{\frac{-2}{nm^2}}
    \\&\leq \left(\frac{(2d-k+1)k}{2}\rho_{\alpha,\mathcal{O}}^2-\rho_{\alpha,\mathcal{O}}+1\right){\gamma_{\alpha,\mathcal{O},d}^\nu}^2\text{disc}(\mathcal{O}/\mathbb{Z})^{\frac{-2}{nm^2}},
\end{align*}
which proves the first result, since
\begin{align*}
    \lambda_k(\Lambda)^2\lambda_{n-k+1}(\Lambda_\alpha^*)^2 &\leq \max\{q_\alpha(\varphi(\mathbf{b}_j)): j \in \{1,\dots,k\}\}\lambda_{n-k+1}(\Lambda_\alpha^*)^2
    \\&\leq  \max\{q_\alpha(\varphi(\mathbf{b}_j))\lambda_{n-j+1}(\Lambda_\alpha^*)^2: j \in \{1,\dots,k\}\}
    \\&\leq \max\left\{\left(\frac{(2d-j+1)j}{2}\rho_{\alpha,\mathcal{O}}^2-\rho_{\alpha,\mathcal{O}}+1\right){\gamma_{\alpha,\mathcal{O},d}^\nu}^2\text{disc}(\mathcal{O}/\mathbb{Z})^{\frac{-2}{nm^2}}\right\}
    \\&= \left(\frac{(2d-k+1)k}{2}\rho_{\alpha,\mathcal{O}}^2-\rho_{\alpha,\mathcal{O}}+1\right){\gamma_{\alpha,\mathcal{O},d}^\nu}^2\text{disc}(\mathcal{O}/\mathbb{Z})^{\frac{-2}{nm^2}},
\end{align*}
when $k < \frac{d+1}{2}$, which can be assumed without loss of generality. As in the proof for the classical case, we prove the second result via induction. The claim clearly holds for $d=1$, so we assume $d>1$. For all $\mathbf{x}^\prime \in \text{Span}_K(\mathbf{b}_1)$, there clearly exists a $y \in \mathcal{O}$ such that
\begin{align*}
    q_\alpha(\varphi(\mathbf{x}^\prime-y\mathbf{b}_1)) \leq \rho_{\alpha,\mathcal{O}}q_\alpha(\varphi(\mathbf{x}^\prime)) \leq \rho_{\alpha,\mathcal{O}}\lambda_1(\Lambda_1)^2.
\end{align*}
By considering the sublattice spanned by the basis $\{\mathbf{b}_2,\mathbf{b}_3,\dots,\mathbf{b}_d\}$ (call this lattice $\Lambda^{\star}$), there exists a $\mathbf{v}^{\star} \in \Lambda^{\star}$ such that, for any $\mathbf{x}^{\star}$ in the orthogonal complement of $\text{Span}_K(\mathbf{b}_1)$ (with respect to $q_\alpha$) such that $q_{\alpha}(\varphi(\mathbf{x}^\star-\mathbf{v}^\star))\leq \mu(\Lambda^\star)^2$. Letting $\mathbf{x}=\mathbf{x}^\prime+\mathbf{x}^\wedge$ where $\mathbf{x} \in \text{Span}_K(\mathbf{b}_1,\dots,\mathbf{b}_d)$, $\mathbf{x}^\prime \in \text{Span}_K(\mathbf{b}_1)$ and $\mathbf{x}^\wedge=\mathbf{x}^\star-\mathbf{v}^\star$, there exists a $y \in \mathcal{O}$ such that
\begin{align*}
    q_{\alpha}(\varphi(\mathbf{x}-y\mathbf{b}_1))=q_{\alpha}(\varphi(\mathbf{x}^\prime-y\mathbf{b}_1))+q_{\alpha}(\varphi(\mathbf{x}^\wedge)) \leq \rho_{\alpha,\mathcal{O}}\lambda_1(\Lambda)^2+\mu(\Lambda^\star)^2.
\end{align*}
By this argument, it must hold that
\begin{align*}
    \lambda_1(\Lambda)^2\mu(\Lambda_\alpha^*)^2&\leq \rho_{\alpha,\mathcal{O}}\lambda_1(\Lambda)^2\lambda_1(\Lambda_\alpha^*)^2+\lambda_1(\Lambda)^2\mu((\Lambda^\star)^*)^2\leq \rho_{\alpha,\mathcal{O}}\lambda_1(\Lambda)^2\lambda_1(\Lambda_\alpha^*)^2+\lambda_1(\Lambda^\star)^2\mu((\Lambda^\star)^*)^2\\&\leq \rho_{\alpha,\mathcal{O}}\text{disc}(\mathcal{O}/\mathbb{Z})^{\frac{-2}{nm^2}}\sum_{i=1}^d\gamma_{\alpha,\mathcal{O},i}^2,
\end{align*}
where the final inequality is derived from (\ref{lambdainequality}) and the induction hypothesis.
\end{proof}
The bounds sharpen for imaginary quadratic and rational quaternion lattices.
\begin{theorem}
Let $K$ be an imaginary quadratic or rational quaternion field that is also Euclidean with Euclidean minimum $\mathcal{M}(K)$. Suppose that $\Lambda$ is a $d$--dimensional lattice spanned over a maximal order $\mathcal{O}$ of $K$ with successive minima $\lambda_i(\Lambda)$, $1 \leq i \leq d$ with respect to the quadratic norm $q:K^D \to \mathbb{R}^+, q(\mathbf{x})=\sum_{i=1}^D\varphi(x_i)\varphi(x_i)^*$. Denote by $\Lambda^*$ the dual of $\Lambda$, and $\mu$ similarly to before with respect to the quadratic norm $q$. Then for all $1 \leq k \leq d$, we have
\begin{align*}
    \lambda_k(\Lambda)^2\lambda_{n-k+1}(\Lambda^*)^2 &\leq \left(\frac{(2d-k+1)k}{2}\mathcal{M}(K)^2-\mathcal{M}(K)+1\right){\gamma_{1,\mathcal{O},d}^\nu}^2\text{disc}(\mathcal{O}/\mathbb{Z})^{\frac{-2}{nm^2}},
    \\ \mu(\Lambda)\lambda_1(\Lambda^*) &\leq \mathcal{M}(K)\sum_{i=1}^d{\gamma_{1,\mathcal{O},i}^\nu}^2\text{disc}(\mathcal{O}/\mathbb{Z})^{\frac{-2}{nm^2}}.
\end{align*}
\end{theorem}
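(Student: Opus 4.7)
The plan is to mirror the proof of the previous (general) theorem line for line, with $\rho_{\alpha,\mathcal{O}}$ everywhere replaced by $\mathcal{M}(K)$, and to justify the replacement using the fact that for imaginary quadratic and rational quaternion fields the quadratic norm restricted to scalars coincides with the reduced norm. Concretely, in these settings $K_\mathbb{R}$ is either $\mathbb{C}$ or $\mathbb{H}$, and for a scalar $x \in K_\mathbb{R}$ one has $q(x) = x x^{*} = \mathrm{nrd}_{K/\mathbb{Q}}(x)$. Hence for any Gram--Schmidt coefficient $\mu_{k,i} \in K_\mathbb{R}$, the HKZ size-reduction condition together with the definition of the Euclidean minimum gives $\min_{y \in \mathcal{O}} q(\mu_{k,i} - \varphi(y)) \leq \mathcal{M}(K)$, which is precisely the analogue of the bound $q_\alpha(\mu_{k,i}) \leq \rho_{\alpha,\mathcal{O}}$ used in the general case.

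First I would fix an HKZ-reduced basis $B = \{\mathbf{b}_1,\dots,\mathbf{b}_d\}$ for $\Lambda$ with respect to $q$ (which exists by the HKZ proposition, since imaginary quadratic and rational quaternion Euclidean fields are right-Euclidean and $K^D$ is unit reducible for these fields by Proposition \ref{unitreduciblefields} and the surrounding remarks). Then I would introduce, exactly as before, the sublattices $\Lambda^k$ spanned by the projections $\mathbf{b}_k(k),\dots,\mathbf{b}_d(d)$, and note that $(\Lambda^k)^{*}$ embeds into $\Lambda^{*}$ so that $\lambda_j(\Lambda^{*}) \leq \lambda_j((\Lambda^k)^{*})$ for admissible indices $j$. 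The building block $\lambda_1(\Lambda)^2 \lambda_1(\Lambda^{*})^2 \leq {\gamma_{1,\mathcal{O},d}^\nu}^2 \,\mathrm{disc}(\mathcal{O}/\mathbb{Z})^{-2/(nm^2)}$ is inherited unchanged from the general determinant/Hermite-invariant estimate.

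Next I would expand $q(\varphi(\mathbf{b}_k))$ using orthogonality, $q(\varphi(\mathbf{b}_k)) = q(\mathbf{b}_k(k)) + \sum_{i=1}^{k-1} q(\mu_{k,i} \mathbf{b}_i(i))$, and apply the multiplicativity $q(x \mathbf{v}) = \mathrm{nrd}_{K/\mathbb{Q}}(x) q(\mathbf{v})$ (valid here because the involution on $\mathbb{C}$ or $\mathbb{H}$ gives $x x^{*} = \mathrm{nrd}(x)$) together with $q(\mu_{k,i}) \leq \mathcal{M}(K)$. Telescoping this over the chain of sublattices $\Lambda^i$ gives
\begin{align*}
q(\varphi(\mathbf{b}_k)) \lambda_1(\Lambda^{*})^2 \leq \bigl(\mathcal{M}(K)(k-1)+1\bigr){\gamma_{1,\mathcal{O},d}^\nu}^2 \mathrm{disc}(\mathcal{O}/\mathbb{Z})^{-2/(nm^2)},
\end{align*}
and the same estimate with $\lambda_{d-k+1}(\Lambda^{*})^2$ in place of $\lambda_1(\Lambda^{*})^2$, using that each $(\Lambda^i)^{*}$ has its $(d-i+1)$-th minimum dominating the corresponding minimum of $\Lambda^{*}$. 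Summing the $\mathcal{M}(K)^2$ contributions term by term produces $\tfrac{(2d-k+1)k}{2}\mathcal{M}(K)^2 - \mathcal{M}(K) + 1$, which yields the first stated bound after taking the max over $j \leq k$ as in the general proof.

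For the second inequality I would run the same induction on $d$. The base case $d = 1$ is immediate from the Euclidean property of $\mathcal{O}$. For the inductive step, decompose an arbitrary $\mathbf{x} \in \mathrm{Span}_K(B)$ orthogonally as $\mathbf{x}' + \mathbf{x}^{\wedge}$ with $\mathbf{x}' \in \mathrm{Span}_K(\mathbf{b}_1)$; choose $y \in \mathcal{O}$ via Euclidean division so that $q(\varphi(\mathbf{x}' - y \mathbf{b}_1)) \leq \mathcal{M}(K) \lambda_1(\Lambda)^2$ using the scalar identity $q(\cdot) = \mathrm{nrd}(\cdot)$ on $\mathbf{b}_1$'s coefficient; and apply the induction hypothesis to the orthogonal complement sublattice $\Lambda^{\star}$ generated by $\{\mathbf{b}_2,\dots,\mathbf{b}_d\}$. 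Combining yields $\mu(\Lambda)^2 \lambda_1(\Lambda^{*})^2 \leq \mathcal{M}(K) \sum_{i=1}^d {\gamma_{1,\mathcal{O},i}^\nu}^2 \mathrm{disc}(\mathcal{O}/\mathbb{Z})^{-2/(nm^2)}$. The only genuinely non-mechanical step is verifying the replacement $\rho_{\alpha,\mathcal{O}} \leadsto \mathcal{M}(K)$; once the scalar identity $q(x) = \mathrm{nrd}(x)$ and the multiplicativity of $q$ under scalar multiplication are in hand, the rest of the argument is an arithmetic copy of the previous proof, which is why I expect the exponent on $\mathcal{M}(K)$ in the stated bound to appear naturally through the same summation.
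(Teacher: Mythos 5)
Your proposal is correct and is exactly the argument the paper intends: the paper omits the proof of this theorem, expecting the reader to rerun the general dual/transference proof with $\rho_{\alpha,\mathcal{O}}$ replaced by $\mathcal{M}(K)$, and you have identified precisely the two facts that license that replacement — the scalar identity $q(x) = xx^* = \mathrm{nrd}_{K/\mathbb{Q}}(x)$ together with multiplicativity $q(x\mathbf{v}) = \mathrm{nrd}_{K/\mathbb{Q}}(x)\,q(\mathbf{v})$ for $K_\mathbb{R} \in \{\mathbb{C},\mathbb{H}\}$, and the Euclidean-minimum bound $\min_{y\in\mathcal{O}} \mathrm{nrd}(x-y) \leq \mathcal{M}(K)$ standing in for the general $\rho_{\alpha,\mathcal{O}}$ estimate. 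One small correction: the unit-reducibility of $K^D$ for imaginary quadratic and rational quaternion fields is asserted in the discussion preceding Proposition~\ref{unitreduciblefields} (``Clearly, the rational numbers $\mathbb{Q}$ and any imaginary quadratic or rational quaternion field admits a unit reducible space for any $D \geq 1$''), not in Proposition~\ref{unitreduciblefields} itself, which treats the real quadratic examples $\mathbb{Q}(\sqrt{2})$, $\mathbb{Q}(\sqrt{3})$, etc.; otherwise the citation trail and the rest of the argument are sound.
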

\subsection{Block-Korkin-Zolotarev reduction}
The proof of the results in this section follow closely to that of \cite{schnorr}. We will continue to assume throughout that all orders treated are Euclidean domains.
\begin{definition}
A basis $B$ for a $d$-dimensional lattice $\Lambda$ spanned over the order $\mathcal{O}$ is said to be \emph{$\beta$-BKZ reduced} with respect to the quadratic norm $q_\alpha$, for some integer $2 \leq \beta \leq d$, if the following properties are satisfied:
\begin{itemize}
    \item For all $1 \leq k \leq d+1-\beta$, $k \leq j \leq k+\beta-1$, $\min_{(x_k,x_{k+1},\dots,x_{k+\beta-1}) \in \mathcal{O}^{\beta-1} \setminus \{\mathbf{0}\}}q_\alpha\left(\sum_{i=j}^{\beta+k-1}\varphi(x_i)\mathbf{b}_i(j)\right) \geq q_\alpha(\mathbf{b}_j(j))$,
    \item For all $1 \leq i <k \leq d$, $\min_{y \in \mathcal{O}}q_{\alpha}(\mu_{k,i}-\varphi(y))=q_\alpha(\mu_{k,i})$.
\end{itemize}
\end{definition}
BKZ reduction can be thought of as a generalisation of HKZ reduction. Note that a $d$--BKZ reduced basis is HKZ reduced, and that any $\beta$--BKZ reduced basis is also $(\beta-1)$--BKZ reduced.
\begin{proposition}[\cite{cdaminimum}, Proposition 3.1]
Let $\alpha \in \mathcal{P}$. Then we have
\begin{align*}
    \text{nrd}_{K/\mathbb{Q}}(\langle \mathbf{b}_i(i),\mathbf{b}_i(i) \rangle_\alpha) \leq \left(\frac{q_\alpha(\mathbf{b}_i(i))}{nm}\right)^{nm}.
\end{align*}
\end{proposition}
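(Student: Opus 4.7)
The plan is to reduce the inequality to the classical AM--GM inequality applied to the eigenvalues of $y := \langle \mathbf{b}_i(i),\mathbf{b}_i(i) \rangle_\alpha$ viewed as a positive semi-definite Hermitian element of $K_\mathbb{R}$. Since $q_\alpha(\mathbf{b}_i(i)) = \text{tr}_{K/\mathbb{Q}}(y)$ by the definition of the quadratic form, the target inequality is exactly the bound $\text{nrd}_{K/\mathbb{Q}}(y) \leq (\text{tr}_{K/\mathbb{Q}}(y)/nm)^{nm}$ for a totally positive $y$, which is a generalised Hadamard/AM--GM statement.

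First I would verify that $y$ is self-adjoint and positive semi-definite in every matrix factor of the decomposition
\[
K_\mathbb{R} \cong \mathbb{M}_{m/2}(\mathbb{H})^{w} \times \mathbb{M}_m(\mathbb{R})^{r_1-w} \times \mathbb{M}_m(\mathbb{C})^{r_2}.
\]
Because $\alpha$ is totally positive, $\varphi(\alpha)=\varphi(\alpha)^*$ and $\varphi(\alpha)$ admits a Hermitian square root $\varphi(\alpha)=\beta\beta^*$ in each factor. Writing $\mathbf{b}_i(i)=(c_1,\dots,c_D)$ with $c_k \in K_\mathbb{R}$, each summand $c_k\varphi(\alpha)c_k^* = (c_k\beta)(c_k\beta)^*$ is manifestly Hermitian positive semi-definite with respect to the factor-specific involution (quaternionic conjugation, transpose, or conjugate transpose), and so therefore is $y$.

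Next I would apply AM--GM to the eigenvalues of $y$ factor by factor. On a positive semi-definite Hermitian matrix over $\mathbb{R}$, $\mathbb{C}$, or $\mathbb{H}$, the reduced trace is the sum of non-negative real eigenvalues and the reduced norm their product. Accounting for the standard multiplicities---eigenvalues in the quaternionic factor count twice through the embedding $\mathbb{H}\hookrightarrow M_2(\mathbb{C})$, eigenvalues in the complex factor are paired with their conjugates by the norm $\mathbb{C}\to\mathbb{R}$, and the real factor contributes directly---one obtains a total of $r_1 m + 2 r_2 m = nm$ non-negative eigenvalues $\lambda_1,\dots,\lambda_{nm}$ satisfying
\[
\text{tr}_{K/\mathbb{Q}}(y)=\sum_{k=1}^{nm}\lambda_k, \qquad \text{nrd}_{K/\mathbb{Q}}(y)=\prod_{k=1}^{nm}\lambda_k.
\]
The usual AM--GM inequality $\prod_{k=1}^{nm}\lambda_k \leq \bigl(\tfrac{1}{nm}\sum_{k=1}^{nm}\lambda_k\bigr)^{nm}$ then yields the claim in one line.

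The main obstacle is the multiplicity bookkeeping in the second step, particularly on the quaternionic factor where $\text{nrd}$ on $\mathbb{M}_{m/2}(\mathbb{H})$ must be expressed via the reduced norm of the associated $m\times m$ complex matrix, and on the complex factor where $\text{nrd}_{K/\mathbb{Q}}$ picks up an additional squaring from $\text{Nm}_{\mathbb{C}/\mathbb{R}}$. Once the multiplicities are shown to sum to $nm$ consistently between reduced trace and reduced norm, the inequality is immediate, with equality when $y$ is a scalar in each factor.
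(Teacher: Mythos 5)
This is a cited result (\cite{cdaminimum}, Proposition 3.1); the paper itself gives no proof, so there is nothing internal to compare against. That said, your argument is sound and is exactly the proof one expects here: $q_\alpha(\mathbf{b}_i(i)) = \text{tr}_{K/\mathbb{Q}}(y)$ by definition of the bilinear form, you verify $y=\langle\mathbf{b}_i(i),\mathbf{b}_i(i)\rangle_\alpha$ is positive semi-definite Hermitian in each archimedean factor by factoring $\varphi(\alpha)=\beta\beta^*$, and then AM--GM over the $nm$ real eigenvalues (with the correct multiplicities: $m$ per real place, $2m$ per complex place via $\sigma$ and $\bar\sigma$, with the ramified places handled through the $\mathbb{H}\hookrightarrow M_2(\mathbb{C})$ embedding) gives $\text{nrd}\leq(\text{tr}/nm)^{nm}$. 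Two small remarks. First, the phrase ``eigenvalues in the complex factor are paired with their conjugates'' is slightly loose --- for Hermitian $y$ the eigenvalues are already real; the doubling comes from the pair of conjugate embeddings $\sigma,\bar\sigma$ of $F$ at a complex place, not from any conjugate pairing of the eigenvalues themselves --- but your total $r_1m+2r_2m=nm$ is correct and this is just wording. Second, you tacitly corrected the paper's displayed decomposition of $K_\mathbb{R}$, which omits the exponent $w$ on the quaternionic factor; that is indeed a typo in the paper, and your version $\mathbb{M}_{m/2}(\mathbb{H})^w\times\mathbb{M}_m(\mathbb{R})^{r_1-w}\times\mathbb{M}_m(\mathbb{C})^{r_2}$ is the right one.
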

\begin{proposition}\label{BKZbound}
Let $\Lambda$ be a $\beta$--BKZ reduced lattice of dimension $d$ over $\mathcal{O}$, with basis $B=\{\mathbf{b}_1,\dots,\mathbf{b}_d\}$ and successive minima $\lambda_1,\dots,\lambda_d$ with respect to the quadratic norm $q_\alpha$, $\alpha \in \mathcal{P}$. Then 
\begin{align*}
q_\alpha(\varphi(\mathbf{b}_1))\leq \left(\frac{\gamma_{\alpha,\mathcal{O},\beta}}{nm}\right)^{2\frac{d-1}{\beta-1}}\lambda_1^2.
\end{align*}
\end{proposition}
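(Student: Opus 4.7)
The plan is to adapt Schnorr's classical BKZ bound \cite{schnorr} to the algebraic setting. Throughout the argument I set $Q_k := q_\alpha(\mathbf{b}_k(k))$, so that $Q_1 = q_\alpha(\varphi(\mathbf{b}_1))$ and the goal becomes a bound of $Q_1$ by $\lambda_1^2$ times the claimed constant.

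First I would establish the block recursion. For each $k$ with $1 \leq k \leq d-\beta+1$, the first bullet of the BKZ definition (taken with $j=k$) asserts that $\mathbf{b}_k(k)$ is a shortest nonzero vector in the $\beta$-dimensional projected sublattice $\Lambda^{[k]}$ generated by $\mathbf{b}_k(k),\ldots,\mathbf{b}_{k+\beta-1}(k)$. Applying the definition of the Hermite constant $\gamma_{\alpha,\mathcal{O},\beta}$ to this block, then using the pairwise $q_\alpha$-orthogonality (the lemma following (\ref{orthogonal})) to factor $\det_\alpha(\Lambda^{[k]})$ as a product of one-dimensional determinants in $\text{nrd}_{K/\mathbb{Q}}(\langle\mathbf{b}_i(i),\mathbf{b}_i(i)\rangle_\alpha)$, and finally invoking the preceding proposition $\text{nrd}_{K/\mathbb{Q}}(\langle\mathbf{b}_i(i),\mathbf{b}_i(i)\rangle_\alpha) \leq (Q_i/nm)^{nm}$, one obtains the clean recursion
\begin{equation*}
Q_k^{\beta-1} \leq \left(\frac{\gamma_{\alpha,\mathcal{O},\beta}}{nm}\right)^{\beta} \prod_{i=k+1}^{k+\beta-1} Q_i,
\end{equation*}
valid for every $k \in \{1,\ldots,d-\beta+1\}$. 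This is the direct algebraic analogue of Schnorr's block inequality.

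I would then telescope this recursion as in Schnorr's original argument, via a sliding-window iteration over overlapping blocks. Applied at $k=1$ the inequality bounds $Q_1$ by a geometric mean of $Q_2,\ldots,Q_\beta$; substituting the analogous bound for each intermediate $Q_i$ and repeating $\lceil (d-1)/(\beta-1)\rceil$ times eventually bounds $Q_1$ in terms of $\min_{1 \leq j \leq d} Q_j$, with the factor $(\gamma_{\alpha,\mathcal{O},\beta}/nm)^{\beta}$ compounding to the exponent $2(d-1)/(\beta-1)$. To close the loop one needs $\min_j Q_j \leq \lambda_1^2$: if $\mathbf{v} = \sum x_i\mathbf{b}_i$ realises $\lambda_1$ and $j$ is the largest index with $x_j \neq 0$, then the orthogonal decomposition of (\ref{orthogonal}) gives $\lambda_1^2 = q_\alpha(\varphi(\mathbf{v})) \geq q_\alpha(\varphi(x_j)\mathbf{b}_j(j)) \geq Q_j$, the last step using left unit-reducibility together with the second BKZ condition, which together prevent $\mathbf{b}_j(j)$ from being shortened by multiplication by a nonzero element of $\mathcal{O}$.

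The main obstacle is the telescoping: producing exactly the sharp exponent $2(d-1)/(\beta-1)$ rather than a ceiling of it requires a carefully weighted combination of the overlapping block inequalities, and one must simultaneously ensure that the $(nm)^{-1}$ factor introduced by the reduced-norm conversion accumulates with exactly the same exponent as $\gamma_{\alpha,\mathcal{O},\beta}$, rather than leaking extra constants into the final bound. This algebra-specific bookkeeping through the reduced-norm/trace conversion is the only genuinely new ingredient compared with Schnorr's real-lattice proof.
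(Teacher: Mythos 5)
Your block inequality and the $\nrd$-to-trace conversion via $\nrd_{K/\mathbb{Q}}(\langle\mathbf{b}_i(i),\mathbf{b}_i(i)\rangle_\alpha)\leq (q_\alpha(\mathbf{b}_i(i))/nm)^{nm}$ are precisely the paper's ingredients, but the two halves of your argument do not meet. Your endgame establishes the weak bound $\lambda_1^2\geq\min_j q_\alpha(\mathbf{b}_j(j))$, which then forces the telescoping to produce the strong bound $q_\alpha(\varphi(\mathbf{b}_1))\leq C\cdot\min_j q_\alpha(\mathbf{b}_j(j))$ --- and the block inequalities, being one-sided, will not chain down to whichever index happens to realise the minimum (if $q_\alpha(\mathbf{b}_m(m))$ is the minimum with $m>1$, nothing in the inequality $q_\alpha(\mathbf{b}_1(1))^{\beta-1}\leq C'\prod_{i=2}^{\beta}q_\alpha(\mathbf{b}_i(i))$ prevents the other factors from being huge). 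The step you flag as ``the main obstacle'' is exactly what the paper resolves by Schnorr's padding device, which you omit: adjoin $\beta-2$ auxiliary vectors $\mathbf{b}_{-\beta+3},\ldots,\mathbf{b}_0$, pairwise $q_\alpha$-orthogonal to each other and to the original basis and each with $q_\alpha(\varphi(\mathbf{b}_i))=q_\alpha(\varphi(\mathbf{b}_1))$, so the enlarged basis is still $\beta$-BKZ reduced. Taking the product of the block inequality over the $d-1$ windows $k=-\beta+3,\ldots,d+1-\beta$, the factors $q_\alpha(\mathbf{b}_i(i))$ for $2\leq i\leq d+1-\beta$ appear with exponent $\beta$ on both sides and cancel, the dummy identifications collapse the left side to $q_\alpha(\varphi(\mathbf{b}_1))^{\beta(\beta-1)/2}$, and one obtains
\begin{align*}
q_\alpha(\varphi(\mathbf{b}_1))\leq\left(\frac{\gamma_{\alpha,\mathcal{O},\beta}}{nm}\right)^{2\frac{d-1}{\beta-1}}\max_{d+2-\beta\leq i\leq d}q_\alpha(\mathbf{b}_i(i)).
\end{align*}
The closing step then must be the stronger statement $\lambda_1^2\geq\max_{d+2-\beta\leq i\leq d}q_\alpha(\mathbf{b}_i(i))$, not your $\min$ bound: the paper reduces WLOG to the case where the shortest vector has nonzero $d$th coordinate, projects it into each tail lattice $\Lambda_i$ spanned by $\mathbf{b}_i(i),\ldots,\mathbf{b}_d(i)$ for $i$ near $d$, and reads off $\lambda_1(\Lambda_i)^2=q_\alpha(\mathbf{b}_i(i))$ from the first BKZ condition. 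Without the auxiliary-vector balancing the exponent does not come out to $2(d-1)/(\beta-1)$, and without the $\max$ endgame the telescoping you do have does not reach $\lambda_1^2$.
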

\begin{proof}
Define the vectors $\mathbf{b}_{-\beta+3},\mathbf{b}_{-\beta+2},\dots,\mathbf{b}_0 \in K^D$ such that $q_\alpha(\varphi(\mathbf{b}_i))=q_\alpha(\varphi(\mathbf{b}_1))$ and $\langle \varphi(\mathbf{b}_i),\varphi(\mathbf{b}_j) \rangle_\alpha=0$ for all $-\beta+3 \leq i \leq 0, -\beta+3 \leq j \leq d$, $i \neq j$. It is easily deduced that the lattice with basis $\{\mathbf{b}_{-\beta+3},\dots,\mathbf{b}_0,\mathbf{b}_1,\dots,\mathbf{b}_d\}$ is also $\beta$--BKZ reduced. It is also clear that we must have
$\det_\alpha(\Lambda)=\prod_{i=1}^d \text{nrd}_{K/\mathbb{Q}}(\langle \mathbf{b}_i(i),\mathbf{b}_i(i) \rangle_\alpha)^m$. Then, by the definition of $\gamma_{\alpha,\mathcal{O},\beta}$ and the assumption that the basis is $\beta$--BKZ reduced, for all $-\beta+3 \leq k \leq d+1-\beta$,
\begin{align*}
    &q_{\alpha}(\mathbf{b}_k(k))^{nm^2\beta} \leq \gamma_{\alpha,\mathcal{O},\beta}^{nm^2\beta}\prod_{i=k}^{k+\beta-1}\text{nrd}_{K/\mathbb{Q}}(\langle \mathbf{b}_i(i),\mathbf{b}_i(i) \rangle_\alpha)^m \\ \implies &q_\alpha(\mathbf{b}_k(k))^{nm\beta} \leq \gamma_{\alpha,\mathcal{O},\beta}^{nm\beta}\prod_{i=k}^{k+\beta-1}\text{nrd}_{K/\mathbb{Q}}(\langle \mathbf{b}_i(i),\mathbf{b}_i(i) \rangle_\alpha).
\end{align*}
Multiplying through for $-\beta+3 \leq k \leq d+1-\beta$:
\begin{align*}
    \prod_{k=-\beta+3}^{d+1-\beta}q_{\alpha}(\mathbf{b}_k(k))^{nm\beta}&\leq \gamma_{\alpha,\mathcal{O},\beta}^{nm\beta(d-1)}\prod_{i=-\beta+3}^1\text{nrd}_{K/\mathbb{Q}}(\langle \mathbf{b}_i(i),\mathbf{b}_i(i)\rangle_\alpha)^{i+\beta-2}\prod_{i=2}^{d+1-\beta}\text{nrd}_{K/\mathbb{Q}}(\langle \mathbf{b}_i(i),\mathbf{b}_i(i)\rangle_\alpha)^\beta
    \\& \times \prod_{i=d+2-\beta}^d\text{nrd}_{K/\mathbb{Q}}(\langle \mathbf{b}_i(i),\mathbf{b}_i(i)\rangle_\alpha)^{d-i+1}
    \\&\leq \gamma_{\alpha,\mathcal{O},\beta}^{nm\beta(d-1)}\prod_{i=-\beta+3}^1\left(\frac{q_\alpha(\mathbf{b}_i(i))}{nm}\right)^{(i+\beta-2)nm}\prod_{i=2}^{d+1-\beta}\left(\frac{q_\alpha(\mathbf{b}_i(i))}{nm}\right)^{nm\beta}
    \\ &\times \prod_{i=d+2-\beta}^d\left(\frac{q_\alpha(\mathbf{b}_i(i))}{nm}\right)^{nm(d-i+1)}.
\end{align*}
Since $q_{\alpha}(\mathbf{b}_{-\beta+3}(-\beta+3))=q_\alpha(\mathbf{b}_{-\beta+2}(-\beta+2))=\dots=q_{\alpha}(\mathbf{b}_0(0))=q_\alpha(\mathbf{b}_1(1))$, balancing the inequality yields
\begin{align*}
    q_\alpha(\varphi(\mathbf{b}_1))^{\frac{nm\beta(\beta-1)}{2}} &\leq \left(\frac{\gamma_{\alpha,\mathcal{O},\beta}}{nm}\right)^{nm\beta(d-1)}\prod_{i=d+2-\beta}^dq_\alpha(\mathbf{b}_i(i)) \\&\leq \left(\frac{\gamma_{\alpha,\mathcal{O},\beta}}{nm}\right)^{nm\beta(d-1)}\max_{i \in \{d+2-\beta,d+1-\beta,\dots,d\}}q_{\alpha}(\mathbf{b}_i(i))^{\frac{nm\beta(\beta-1)}{2}},
\end{align*}
which implies that
\begin{align*}
    q_\alpha(\varphi(\mathbf{b}_1))\leq \left(\frac{\gamma_{\alpha,\mathcal{O},\beta}}{nm}\right)^{2\frac{d-1}{\beta-1}}\max_{i \in \{d+2-\beta,d+1-\beta,\dots,d\}}q_{\alpha}(\mathbf{b}_i(i)).
\end{align*}
If $q_\alpha(\varphi(\mathbf{b}_1))=\lambda_1^2$, we are done, so we assume that the shortest nonzero lattice vector $\mathbf{v}$ is different from $\mathbf{b}_1$. We may assume that $\mathbf{v}$ is not in the sublattice generated by the basis $\{\mathbf{b}_1,\dots,\mathbf{b}_{d-1}\}$ without loss of generality, as otherwise, we may take $d=d-1$ and continue applying the argument inductively until this holds. Denote by $\Lambda_i$ the lattice spanned by the basis $\{\mathbf{b}_i(i),\dots,\mathbf{b}_d(i)\}$, $d-\beta+1 \leq i \leq d$. Then, since $\Lambda$ is $\beta$-BKZ reduced, we have
\begin{align*}
    \lambda_1(\Lambda)^2=q_\alpha(\varphi(\mathbf{v})) \geq \lambda_1(\Lambda_i)^2 = q_\alpha(\mathbf{b}_i(i)).
\end{align*}
It therefore holds that $\lambda_1(\Lambda)^2 \geq \max_{i \in \{d+2-\beta,d+1-\beta,\dots,d\}}q_{\alpha}(\mathbf{b}_i(i)),$ which proves the claim.
\end{proof}
\begin{theorem}
Let $\Lambda$ be a $d$-dimensional lattice, and let $B=\{\mathbf{b}_1,\dots,\mathbf{b}_d\}$ be a $\beta$-BKZ reduced basis of $\Lambda$. Then for all $2 \leq i \leq d$, 
\begin{align*}
    &q_\alpha(\varphi(\mathbf{b}_1))\leq \left(\frac{\gamma_{\alpha,\mathcal{O},\beta}}{nm}\right)^{2\frac{d-1}{\beta-1}}\lambda_1^2,
    \\&q_\alpha(\varphi(\mathbf{b}_i)) \leq \left(\frac{\gamma_{\alpha,\mathcal{O},\beta}}{nm}\right)^{2\frac{d-i}{\beta-1}}\left(1+\rho_{\alpha,\mathcal{O}}\left(\frac{\gamma_{\alpha,\mathcal{O},\beta}}{nm}\right)^{\frac{i-1}{\beta-1}}\frac{1-\left(\frac{\gamma_{\alpha,\mathcal{O},\beta}}{nm}\right)^{2\frac{1-i}{\beta-1}}}{1-\left(\frac{\gamma_{\alpha,\mathcal{O},\beta}}{nm}\right)^{\frac{-2}{\beta-1}}}\right)\lambda_i^2.
\end{align*}
\end{theorem}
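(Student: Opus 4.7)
The first bound, on $q_\alpha(\varphi(\mathbf{b}_1))$, is literally the statement of Proposition \ref{BKZbound}, so there is nothing to add beyond invoking it. The content lies entirely in the second bound, and the plan is to reduce it to Proposition \ref{BKZbound} applied not to $\Lambda$ itself but to the projected sublattices that arise in the BKZ framework. First I would use the Gram--Schmidt-type decomposition of (\ref{orthogonal}), namely $\varphi(\mathbf{b}_i)=\mathbf{b}_i(i)+\sum_{j=1}^{i-1}\mu_{i,j}\mathbf{b}_j(j)$, together with the orthogonality lemma, to write
\begin{align*}
    q_\alpha(\varphi(\mathbf{b}_i))=q_\alpha(\mathbf{b}_i(i))+\sum_{j=1}^{i-1}q_\alpha(\mu_{i,j}\mathbf{b}_j(j)).
\end{align*}
The second bullet in the definition of $\beta$--BKZ reducedness says exactly that each $\mu_{i,j}$ has been size-reduced modulo $\varphi(\mathcal{O})$, so by the minimum-distance lemma introducing $\rho_{\alpha,\mathcal{O}}$ we have $q_\alpha(\mu_{i,j})\leq\rho_{\alpha,\mathcal{O}}$, and the subadditive/submultiplicative axioms for the norm give $q_\alpha(\mu_{i,j}\mathbf{b}_j(j))\leq\rho_{\alpha,\mathcal{O}}q_\alpha(\mathbf{b}_j(j))$. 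The remaining task is therefore to bound each Gram--Schmidt squared length $q_\alpha(\mathbf{b}_j(j))$ for $1\leq j\leq i$.

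For this, I would apply Proposition \ref{BKZbound} to the projected sublattice $\Lambda^j$ with basis $\{\mathbf{b}_j(j),\mathbf{b}_{j+1}(j),\dots,\mathbf{b}_d(j)\}$ of dimension $d-j+1$. A direct check from the first BKZ axiom shows that this basis is itself $\beta$--BKZ reduced (or HKZ reduced when $d-j+1<\beta$, which only strengthens the bound), so Proposition \ref{BKZbound} yields
\begin{align*}
    q_\alpha(\mathbf{b}_j(j))\leq\left(\frac{\gamma_{\alpha,\mathcal{O},\beta}}{nm}\right)^{2\frac{d-j}{\beta-1}}\lambda_1(\Lambda^j)^2.
\end{align*}
Next I would bound $\lambda_1(\Lambda^j)\leq\lambda_j(\Lambda)$ by a pigeonhole-style argument identical to the one used in Theorem \ref{minkowskibounds1}: among the $j$ linearly independent vectors $\mathbf{s}_1,\dots,\mathbf{s}_j$ realising $\lambda_1(\Lambda),\dots,\lambda_j(\Lambda)$, at least one $\mathbf{s}_k$ ($k\leq j$) cannot lie in $\text{Span}_K(\mathbf{b}_1,\dots,\mathbf{b}_{j-1})$, so its projection onto the orthogonal complement is a nonzero vector of $\Lambda^j$ with $q_\alpha$-length at most $\lambda_k^2\leq\lambda_j^2$.

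Combining the two displays, and monotonicity $\lambda_j\leq\lambda_i$ for $j\leq i$, yields
\begin{align*}
    q_\alpha(\varphi(\mathbf{b}_i))\leq\left(\frac{\gamma_{\alpha,\mathcal{O},\beta}}{nm}\right)^{2\frac{d-i}{\beta-1}}\lambda_i^2+\rho_{\alpha,\mathcal{O}}\lambda_i^2\sum_{j=1}^{i-1}\left(\frac{\gamma_{\alpha,\mathcal{O},\beta}}{nm}\right)^{2\frac{d-j}{\beta-1}}.
\end{align*}
Factoring $(\gamma_{\alpha,\mathcal{O},\beta}/(nm))^{2(d-i)/(\beta-1)}$ out of both terms turns the inner sum into a finite geometric series with common ratio $(\gamma_{\alpha,\mathcal{O},\beta}/(nm))^{-2/(\beta-1)}$, which collapses to the closed form in the statement.

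The main obstacle I anticipate is the bookkeeping in the last step: verifying that the geometric-series identity yields exactly the factor written in the theorem rather than an equivalent-looking rearrangement, and dealing cleanly with the edge case $d-j+1<\beta$ where $\Lambda^j$ only admits a smaller block size than $\beta$ (here one invokes the trivial observation that a $\beta$--BKZ basis is $\beta'$--BKZ for every $\beta'\leq\beta$, so Proposition \ref{BKZbound} still applies and the resulting bound is no weaker than the one quoted). The pigeonhole step and the control of the $\mu_{i,j}$ are routine given the material already developed.
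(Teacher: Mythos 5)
Your proposal matches the paper's proof essentially step for step: decompose $q_\alpha(\varphi(\mathbf{b}_i))$ via the Gram--Schmidt identity, apply Proposition \ref{BKZbound} to each projected tail $\Lambda^j$, pass from $\lambda_1(\Lambda^j)$ to $\lambda_j(\Lambda)\le\lambda_i(\Lambda)$ by the pigeonhole argument, control $q_\alpha(\mu_{i,j})\le\rho_{\alpha,\mathcal{O}}$ by size-reduction, and collapse the geometric series. Your anticipated obstacle about the final bookkeeping is well-founded: the sum
\begin{align*}
\sum_{j=1}^{i-1}\left(\frac{\gamma_{\alpha,\mathcal{O},\beta}}{nm}\right)^{2\frac{i-j}{\beta-1}}
=\left(\frac{\gamma_{\alpha,\mathcal{O},\beta}}{nm}\right)^{2\frac{i-1}{\beta-1}}\cdot\frac{1-\left(\frac{\gamma_{\alpha,\mathcal{O},\beta}}{nm}\right)^{2\frac{1-i}{\beta-1}}}{1-\left(\frac{\gamma_{\alpha,\mathcal{O},\beta}}{nm}\right)^{\frac{-2}{\beta-1}}},
\end{align*}
so the exponent $\frac{i-1}{\beta-1}$ appearing in the theorem's display (and in the paper's own final line) is missing a factor of $2$ --- a slip in the paper, not a gap in your argument. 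One further subtlety that both you and the paper treat lightly: for $j>d+1-\beta$ the projected lattice $\Lambda^j$ has dimension $d-j+1<\beta$, so Proposition \ref{BKZbound} cannot be invoked at block size $\beta$ directly; your remark that the tail is HKZ reduced is correct, but converting the resulting bound (which naturally involves $\gamma_{\alpha,\mathcal{O},d-j+1}$ with exponent $1$) into the stated form with $\gamma_{\alpha,\mathcal{O},\beta}$ and exponent $\frac{d-j}{\beta-1}<1$ requires a monotonicity argument in the Hermite constants that neither you nor the paper spells out.
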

\begin{proof}
The first inequality was proved in \ref{BKZbound}. By applying \ref{BKZbound} to the lattice $L_i$ generated by the basis $\{\mathbf{b}_i(i),\dots,\mathbf{b}_d(i)\}$, we have
\begin{align*}
    q_\alpha(\mathbf{b}_i(i)) \leq \left(\frac{\gamma_{\alpha,\mathcal{O},\beta}}{nm}\right)^{2\frac{d-i}{\beta-1}}\lambda_1(\Lambda_i)^2\leq \left(\frac{\gamma_{\alpha,\mathcal{O},\beta}}{nm}\right)^{2\frac{d-i}{\beta-1}}\lambda_i^2,
\end{align*}
where the rightmost inequality comes from the fact that there are $i$ linearly independent vectors $\mathbf{v}_1,\dots,\mathbf{v}_i$ such that $q_\alpha(\varphi(\mathbf{v}_i)) \leq \lambda_i^2$, and so at least one of these must be such that $\mathbf{v}_k(i) \neq \mathbf{0}$ for some $1 \leq k \leq i$. Then
\begin{align*}
    q_\alpha(\varphi(\mathbf{b}_i))&=q_\alpha(\mathbf{b}_i(i))+\sum_{j=1}^iq_\alpha(\mu_{i,j}\mathbf{b}_j(j)) \leq q_\alpha(\mathbf{b}_i(i))+\sum_{j=1}^{i-1}q_\alpha(\mu_{i,j})q_\alpha(\mathbf{b}_j(j))
    \\& \leq \left(\frac{\gamma_{\alpha,\mathcal{O},\beta}}{nm}\right)^{2\frac{d-i}{\beta-1}}\lambda_i^2+\rho_{\alpha,\mathcal{O}}\sum_{j=1}^{i-1}\left(\frac{\gamma_{\alpha,\mathcal{O},\beta}}{nm}\right)^{2\frac{d-j}{\beta-1}}
    \\&\leq \left(\frac{\gamma_{\alpha,\mathcal{O},\beta}}{nm}\right)^{2\frac{d-i}{\beta-1}}\left(1+\rho_{\alpha,\mathcal{O}}\sum_{j=1}^{i-1}\left(\frac{\gamma_{\alpha,\mathcal{O},\beta}}{nm}\right)^{2\frac{i-j}{\beta-1}}\right)\lambda_i^2
    \\&=\left(\frac{\gamma_{\alpha,\mathcal{O},\beta}}{nm}\right)^{2\frac{d-i}{\beta-1}}\left(1+\rho_{\alpha,\mathcal{O}}\left(\frac{\gamma_{\alpha,\mathcal{O},\beta}}{nm}\right)^{\frac{i-1}{\beta-1}}\frac{1-\left(\frac{\gamma_{\alpha,\mathcal{O},\beta}}{nm}\right)^{2\frac{1-i}{\beta-1}}}{1-\left(\frac{\gamma_{\alpha,\mathcal{O},\beta}}{nm}\right)^{\frac{-2}{\beta-1}}}\right)\lambda_i^2,
\end{align*}
as required.
\end{proof}
Similarly to the case of HKZ reduction, we get sharper bounds when the division algebra over which we define the algebraic lattice is either imaginary quadratic or a rational quaternion field.
\begin{theorem}
Let $\Lambda$ be a lattice of dimension $d$ over $\mathcal{O}$ where $\mathcal{O}$ is the maximal order of an imaginary quadratic or rational quaternion field $K$, with $\beta-BKZ$ basis $\{\mathbf{b}_1,\dots,\mathbf{b}_d\}$, reduced with respect to the regular quadratic norm $q$. Then, for all $2 \leq i \leq d$, we have
\begin{align*}
    &q(\mathbf{b}_1) \leq \left(\frac{\gamma_{1,\mathcal{O},d}}{nm}\right)^{2\frac{d-1}{\beta-1}}\lambda_1^2,\\
    &q(\mathbf{b}_i) \leq  \left(\frac{\gamma_{1,\mathcal{O},\beta}}{nm}\right)^{2\frac{d-i}{\beta-1}}\left(1+\mathcal{M}(K)\left(\frac{\gamma_{1,\mathcal{O},\beta}}{nm}\right)^{\frac{i-1}{\beta-1}}\frac{1-\left(\frac{\gamma_{1,\mathcal{O},\beta}}{nm}\right)^{2\frac{1-i}{\beta-1}}}{1-\left(\frac{\gamma_{1,\mathcal{O},\beta}}{nm}\right)^{\frac{-2}{\beta-1}}}\right)\lambda_i^2.
\end{align*}
\end{theorem}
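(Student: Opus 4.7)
The plan is to mirror the argument of the preceding general BKZ bound, but exploit the two properties that are special to imaginary quadratic fields and rational quaternion fields: namely, that $q$ is multiplicative in the scalar, $q(x\mathbf{v})=\nrd_{K/\mathbb{Q}}(x)\,q(\mathbf{v})$, and that the sharp rounding estimate $\min_{y\in\mathcal{O}}\nrd_{K/\mathbb{Q}}(x-y)\leq \mathcal{M}(K)$ replaces the generic estimate $\rho_{\alpha,\mathcal{O}}$ used in the general proof. With $\alpha=1$, Proposition \ref{BKZbound} applies verbatim to the lattice $\Lambda$, which immediately gives the first inequality
\begin{align*}
    q(\mathbf{b}_1)\leq \left(\frac{\gamma_{1,\mathcal{O},d}}{nm}\right)^{2\frac{d-1}{\beta-1}}\lambda_1^2,
\end{align*}
so the bulk of the work is the second inequality.

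For $i\geq 2$ I would first apply Proposition \ref{BKZbound} to the projected sublattice $\Lambda_i$ generated by $\{\mathbf{b}_i(i),\ldots,\mathbf{b}_d(i)\}$, which is still $\beta$--BKZ reduced and has dimension $d-i+1$. This yields
\begin{align*}
    q(\mathbf{b}_i(i))\leq \left(\frac{\gamma_{1,\mathcal{O},\beta}}{nm}\right)^{2\frac{d-i}{\beta-1}}\lambda_1(\Lambda_i)^2,
\end{align*}
and the usual pigeonhole observation --- among the $i$ linearly independent vectors $\mathbf{v}_1,\ldots,\mathbf{v}_i$ realising the first $i$ successive minima of $\Lambda$, at least one must have nonzero projection $\mathbf{v}_k(i)$ --- yields $\lambda_1(\Lambda_i)^2\leq \lambda_i^2$.

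Next I would decompose $\mathbf{b}_i$ using the Gram--Schmidt-type orthogonalisation from equation \eqref{orthogonal} and orthogonality of the $\mathbf{b}_j(j)$:
\begin{align*}
    q(\mathbf{b}_i)=q(\mathbf{b}_i(i))+\sum_{j=1}^{i-1}q(\mu_{i,j}\mathbf{b}_j(j)).
\end{align*}
Here the key point, and what replaces $\rho_{\alpha,\mathcal{O}}$ by $\mathcal{M}(K)$, is that in the imaginary quadratic or rational quaternion setting the BKZ size-reduction condition $\min_{y\in\mathcal{O}}q(\mu_{i,j}-\varphi(y))=q(\mu_{i,j})$ combined with multiplicativity of $q$ forces $q(\mu_{i,j})\leq \mathcal{M}(K)$, and moreover $q(\mu_{i,j}\mathbf{b}_j(j))=\nrd_{K/\mathbb{Q}}(\mu_{i,j})q(\mathbf{b}_j(j))$. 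Applying the already-established bound on $q(\mathbf{b}_j(j))$ to each term gives a geometric sum
\begin{align*}
    q(\mathbf{b}_i)\leq \left(\frac{\gamma_{1,\mathcal{O},\beta}}{nm}\right)^{2\frac{d-i}{\beta-1}}\lambda_i^2+\mathcal{M}(K)\sum_{j=1}^{i-1}\left(\frac{\gamma_{1,\mathcal{O},\beta}}{nm}\right)^{2\frac{d-j}{\beta-1}}\lambda_j^2,
\end{align*}
and after bounding $\lambda_j^2\leq \lambda_i^2$, factoring out, and summing the geometric series exactly as in the proof of the preceding theorem, the stated closed form drops out.

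Honestly there is no real obstacle: the substantive content is already contained in Proposition \ref{BKZbound} and in the specialisation argument used in Theorem \ref{minkowskiquaternion}. The only care needed is bookkeeping the geometric series correctly and verifying that the multiplicativity $q(x\mathbf{v})=\nrd_{K/\mathbb{Q}}(x)q(\mathbf{v})$ --- which holds because under the canonical involution $xx^*=\nrd_{K/\mathbb{Q}}(x)$ in these two cases --- is what legitimately lets $\rho_{\alpha,\mathcal{O}}$ be replaced by $\mathcal{M}(K)$. For this reason the author simply omits the proof, and I would do the same beyond a remark pointing at the two ingredients above.
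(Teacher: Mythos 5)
Your reconstruction matches the paper's intended (and omitted) proof: apply Proposition~\ref{BKZbound} with $\alpha=1$ to each projected sublattice $\Lambda_i$, use the pigeonhole argument to get $\lambda_1(\Lambda_i)^2\leq\lambda_i^2$, then decompose $q(\mathbf{b}_i)$ via the Gram--Schmidt orthogonalisation, replace $\rho_{\alpha,\mathcal{O}}$ by $\mathcal{M}(K)$ via the size-reduction condition together with $q(x\mathbf{v})=\nrd_{K/\mathbb{Q}}(x)q(\mathbf{v})$, bound $\lambda_j^2\leq\lambda_i^2$, and sum the geometric series. The paper itself states only that the argument is analogous to the HKZ specialisation and leaves the proof out, so your account is faithful both to the general BKZ proof and to the specialisation mechanism established in Theorem~\ref{minkowskiquaternion}.
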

\section{The Closest Vector Problem}
\begin{theorem}
Let $\Lambda$ be a lattice over a division algebra $K$ with basis $B=\{\mathbf{b}_1,\dots,\mathbf{b}_d\}$, and suppose that $B$ is Minkowski reduced with respect to the quadratic norm $q_\alpha$, $\alpha \in \mathcal{P}$. Let $\mathbf{x}$ be an arbitrary point in $\text{Span}_K\{\mathbf{b}_1,\dots,\mathbf{b}_d\}$ so that $\varphi(\mathbf{x})=\sum_{i=1}^dx_i\mathbf{b}_i(i)$, $x_i \in K_{\mathbb{R}}$. Suppose $\mathbf{v}=\sum_{i=1}^d v_i\mathbf{b}_i \in \Lambda, v_i \in \mathcal{O},$ is constructed so that $q_{\alpha}\left(x_j-\sum_{i=j}^d\varphi(v_i)\mu_{i,j}\right) \leq \rho_{\alpha,\mathcal{O}}$ for all $1 \leq j \leq d$ (where we set $\mu_{j,j}=1$), or $q\left(x_j-\sum_{i=j}^d\varphi(v_i)\mu_{i,j}\right) \leq \mathcal{M}(K)$ if $K$ is imaginary quadratic or a rational quaternion ring, where $q$ defines the regular quadratic norm as previously defined. Then
\begin{align*}
    q_\alpha(\varphi(\mathbf{v}-\mathbf{x})) \leq (1+\rho_{\alpha,\mathcal{O}})\delta_d^2\lambda_d^2,
\end{align*}
where $\delta_i^2$ is defined as in Theorem \ref{minkowskibounds1}. If the field is imaginary quadratic or quaternion, we may replace $\rho_{\alpha,\mathcal{O}}$ with $\mathcal{M}(K)$ and define $\delta_i^2$ according to Theorem \ref{minkowskiquaternion}.
\end{theorem}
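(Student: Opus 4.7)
The plan is to mimic Babai's nearest-plane algorithm on the Gram--Schmidt basis and then bound $q_\alpha(\varphi(\mathbf{v}-\mathbf{x}))$ by combining the orthogonality lemma from Section 3 with the Minkowski bound from Theorem \ref{minkowskibounds1}.

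First, I would rewrite $\varphi(\mathbf{v})$ in the orthogonal basis $\{\mathbf{b}_1(1),\dots,\mathbf{b}_d(d)\}$. Starting from $\varphi(\mathbf{v})=\sum_{i=1}^d\varphi(v_i)\varphi(\mathbf{b}_i)$ and substituting $\varphi(\mathbf{b}_i)=\mathbf{b}_i(i)+\sum_{j=1}^{i-1}\mu_{i,j}\mathbf{b}_j(j)$, a swap of the order of summation gives
\begin{align*}
    \varphi(\mathbf{v})=\sum_{j=1}^d\left(\sum_{i=j}^d\varphi(v_i)\mu_{i,j}\right)\mathbf{b}_j(j),
\end{align*}
under the convention $\mu_{j,j}=1$. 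Subtracting the given expansion $\varphi(\mathbf{x})=\sum_j x_j\mathbf{b}_j(j)$ yields a Gram--Schmidt decomposition of $\varphi(\mathbf{v}-\mathbf{x})$ whose $j$th coefficient is precisely the quantity the construction controls.

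Second, by the orthogonality lemma, $q_\alpha$ of a sum of Gram--Schmidt components splits into a sum of the $q_\alpha$-values of those components. Combining this with the submultiplicative property of $q_\alpha$ (invoked exactly as in the proof of Theorem \ref{minkowskibounds1}) and the hypothesis that every $x_j-\sum_{i=j}^d\varphi(v_i)\mu_{i,j}$ has $q_\alpha$-value bounded by $\rho_{\alpha,\mathcal{O}}$, one gets
\begin{align*}
    q_\alpha(\varphi(\mathbf{v}-\mathbf{x}))\leq \rho_{\alpha,\mathcal{O}}\sum_{j=1}^d q_\alpha(\mathbf{b}_j(j)).
\end{align*}
Because the decomposition $\varphi(\mathbf{b}_j)=\mathbf{b}_j(j)+\sum_{k<j}\mu_{j,k}\mathbf{b}_k(k)$ is orthogonal in the same sense, it follows that $q_\alpha(\mathbf{b}_j(j))\leq q_\alpha(\varphi(\mathbf{b}_j))$, and Theorem \ref{minkowskibounds1} then gives $q_\alpha(\mathbf{b}_j(j))\leq\delta_j^2\lambda_j^2\leq\delta_j^2\lambda_d^2$.

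Finally, I would collapse the resulting $\rho_{\alpha,\mathcal{O}}\lambda_d^2\sum_{j=1}^d\delta_j^2$ using the defining recursion $\delta_d^2=1+\rho_{\alpha,\mathcal{O}}\sum_{i=1}^{d-1}\delta_i^2$, which rearranges to $\rho_{\alpha,\mathcal{O}}\sum_{j=1}^d\delta_j^2=(1+\rho_{\alpha,\mathcal{O}})\delta_d^2-1\leq (1+\rho_{\alpha,\mathcal{O}})\delta_d^2$, giving the claimed bound. The imaginary quadratic / rational quaternion case is verbatim the same, with $\rho_{\alpha,\mathcal{O}}$ replaced by $\mathcal{M}(K)$, the $\delta_j^2$ drawn from Theorem \ref{minkowskiquaternion}, and the multiplicativity $q(xy)=\nrd_{K/\mathbb{Q}}(x)q(y)$ used in place of mere submultiplicativity. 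I do not anticipate a genuine obstacle; the only slightly delicate point is justifying the submultiplicative step when the scalars lie in $K_\mathbb{R}$ rather than $\mathcal{O}$, but this is handled exactly as in the proof of Theorem \ref{minkowskibounds1} and needs no new ingredient.
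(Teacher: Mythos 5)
Your proof is correct and follows essentially the same route as the paper's: decompose $\varphi(\mathbf{v}-\mathbf{x})$ in the Gram--Schmidt basis, apply the orthogonality lemma and the submultiplicative bound with the hypothesis $q_\alpha(x_j-\sum_{i\geq j}\varphi(v_i)\mu_{i,j})\leq\rho_{\alpha,\mathcal{O}}$, pass through $q_\alpha(\mathbf{b}_j(j))\leq q_\alpha(\varphi(\mathbf{b}_j))\leq\delta_j^2\lambda_j^2$, and collapse via the recursion for $\delta_k^2$. Your explicit telescoping of $\rho_{\alpha,\mathcal{O}}\sum_j\delta_j^2=(1+\rho_{\alpha,\mathcal{O}})\delta_d^2-1$ is just a cleaner exposition of the final inequality the paper states without comment.
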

\begin{proof}
We assume $\alpha=1$ if the field is imaginary quadratic or rational quaternion, and replace $\rho_{\alpha,\mathcal{O}}$ with $\mathcal{M}(K)$ when considering these fields. By definition, we have
\begin{align*}
    q_{\alpha}(\varphi(\mathbf{x}-\mathbf{v}))&=\sum_{i=1}^{d}q_\alpha\left(\left(x_j-\sum_{i=j}^d\varphi(v_i)\mu_{i,j}\right)\mathbf{b}_i(i)\right)\leq \sum_{i=1}^{d}q_\alpha\left(\left(x_j-\sum_{i=j}^d\varphi(v_i)\mu_{i,j}\right)\right)q_\alpha\left(\mathbf{b}_i(i)\right)
    \\&\leq \sum_{i=1}^d\rho_{\alpha,\mathcal{O}}q_\alpha(\varphi(\mathbf{b}_i)) \leq \sum_{i=1}^d\rho_{\alpha,\mathcal{O}}\delta_i^2\lambda_i^2 \leq (1+\rho_{\alpha,\mathcal{O}})\delta_d^2\lambda_d^2,
\end{align*}
as required.
\end{proof}
The following theorem may also be applied for HKZ reduced bases by setting $\beta=d$.
\begin{theorem}
Let $\Lambda$ be a lattice over a division algebra $K$ with basis $B=\{\mathbf{b}_1,\dots,\mathbf{b}_d\}$, and suppose that $B$ is $\beta$-BKZ reduced with respect to the quadratic norm $q_\alpha$, $\alpha \in \mathcal{P}$. Let $\mathbf{x}$ be an arbitrary point in $\text{Span}_K\{\mathbf{b}_1,\dots,\mathbf{b}_d\}$ so that $\varphi(\mathbf{x})=\sum_{i=1}^dx_i\mathbf{b}_i(i)$, $x_i \in K_{\mathbb{R}}$. Suppose $\mathbf{v}=\sum_{i=1}^d v_i\mathbf{b}_i \in \Lambda, v_i \in \mathcal{O},$ is constructed so that $q_{\alpha}\left(x_j-\sum_{i=j}^d\varphi(v_i)\mu_{i,j}\right) \leq \rho_{\alpha,\mathcal{O}}$ for all $1 \leq j \leq d$ (where we set $\mu_{j,j}=1$), or $q\left(x_j-\sum_{i=j}^d\varphi(v_i)\mu_{i,j}\right) \leq \mathcal{M}(K)$ if $K$ is imaginary quadratic or a rational quaternion ring, where $q$ defines the regular quadratic norm as previously defined. Then for $2 \leq \beta \leq d-1$, we have
\begin{align*}
    q_\alpha(\varphi(\mathbf{x}-\mathbf{v})) \leq \rho_{\alpha,\mathcal{O}}\left(\frac{\gamma_{\alpha,\mathcal{O},\beta}}{nm}\right)^{2\frac{d-1}{\beta-1}}\frac{1-\left(\frac{\gamma_{\alpha,\mathcal{O},\beta}}{nm}\right)^{2\frac{1-d}{\beta-1}}}{1-\left(\frac{\gamma_{\alpha,\mathcal{O},\beta}}{nm}\right)^{\frac{-2}{\beta-1}}}\lambda_d^2,
\end{align*}
and for $\beta=d$ i.e. HKZ reduced basis,
\begin{align*}
    q_\alpha(\varphi(\mathbf{x}-\mathbf{v})) \leq d\rho_{\alpha,\mathcal{O}}\lambda_d^2.
\end{align*}
If the field is imaginary quadratic or quaternion, we may replace $\rho_{\alpha,\mathcal{O}}$ with $\mathcal{M}(K)$.
\end{theorem}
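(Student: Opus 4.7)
The plan is to mirror the proof of the preceding Minkowski-based closest-vector bound, substituting the BKZ length control on the Gram--Schmidt vectors $\mathbf{b}_j(j)$ for the Minkowski one. The entry point is the orthogonal decomposition of $\varphi(\mathbf{x}-\mathbf{v})$ in the basis $\{\mathbf{b}_j(j)\}_{j=1}^d$, which converts both the hypothesis on the coordinate residues and the length control into additive form.

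Substituting $\varphi(\mathbf{b}_i)=\mathbf{b}_i(i)+\sum_{j<i}\mu_{i,j}\mathbf{b}_j(j)$ into $\varphi(\mathbf{v})=\sum_{i}\varphi(v_i)\varphi(\mathbf{b}_i)$ and regrouping by $\mathbf{b}_j(j)$ (with the convention $\mu_{j,j}=1$) gives $\varphi(\mathbf{x}-\mathbf{v})=\sum_{j=1}^d c_j\,\mathbf{b}_j(j)$, where $c_j:=x_j-\sum_{i=j}^d\varphi(v_i)\mu_{i,j}$ is exactly the quantity controlled by hypothesis. By the orthogonality lemma following equation (\ref{orthogonal}) this decomposition is $q_\alpha$--orthogonal, so $q_\alpha(\varphi(\mathbf{x}-\mathbf{v}))=\sum_j q_\alpha(c_j\mathbf{b}_j(j))\leq\sum_j q_\alpha(c_j)\,q_\alpha(\mathbf{b}_j(j))$ by the norm axioms. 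Using $q_\alpha(c_j)\leq\rho_{\alpha,\mathcal{O}}$ (or $\mathcal{M}(K)$ in the imaginary quadratic or rational quaternion case) reduces the problem to bounding $\sum_j q_\alpha(\mathbf{b}_j(j))$.

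To bound $q_\alpha(\mathbf{b}_j(j))$, I would apply Proposition \ref{BKZbound} to the projected tail lattice $\Lambda^j$ with basis $\{\mathbf{b}_j(j),\mathbf{b}_{j+1}(j),\dots,\mathbf{b}_d(j)\}$, which inherits $\beta$--BKZ reduction from the first condition in the definition restricted to indices $\geq j$. This yields $q_\alpha(\mathbf{b}_j(j))\leq(\gamma_{\alpha,\mathcal{O},\beta}/(nm))^{2(d-j)/(\beta-1)}\lambda_1(\Lambda^j)^2$, and a pigeonhole among the $j$ vectors attaining $\lambda_1,\dots,\lambda_j$ of $\Lambda$ forces at least one to have nonzero projection onto the orthogonal complement of $\text{Span}_K(\mathbf{b}_1,\dots,\mathbf{b}_{j-1})$, giving $\lambda_1(\Lambda^j)\leq\lambda_j\leq\lambda_d$. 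Summing the resulting geometric progression in $j=1,\dots,d$ with common ratio $(\gamma_{\alpha,\mathcal{O},\beta}/(nm))^{2/(\beta-1)}$ collapses to the stated closed form for $2\leq\beta\leq d-1$.

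For the HKZ case $\beta=d$, the geometric-series expression degenerates, so I would invoke the HKZ property directly: $\mathbf{b}_j(j)$ is by definition the shortest nonzero vector of $\Lambda^j$, so $q_\alpha(\mathbf{b}_j(j))=\lambda_1(\Lambda^j)^2\leq\lambda_j^2\leq\lambda_d^2$, the sum telescopes to at most $d\lambda_d^2$, and we obtain $q_\alpha(\varphi(\mathbf{x}-\mathbf{v}))\leq d\rho_{\alpha,\mathcal{O}}\lambda_d^2$. No step is a serious obstacle --- the whole argument is essentially bookkeeping on Gram--Schmidt coordinates --- and the only points requiring some care are verifying that the tail lattice inherits $\beta$--BKZ reduction and the pigeonhole bound $\lambda_1(\Lambda^j)\leq\lambda_j$.
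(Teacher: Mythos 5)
Your proof follows essentially the same approach as the paper's: decompose $\varphi(\mathbf{x}-\mathbf{v})$ orthogonally in the Gram--Schmidt directions so that the coordinates are exactly the quantities $c_j$ controlled by the hypothesis, use submultiplicativity to pull out $q_\alpha(c_j)\le\rho_{\alpha,\mathcal{O}}$, bound the Gram--Schmidt lengths $q_\alpha(\mathbf{b}_j(j))$ via Proposition~\ref{BKZbound} applied to the projected tail lattice, and sum the resulting geometric progression after crudely bounding each $\lambda_j$ by $\lambda_d$. Your version is actually slightly cleaner than the paper's on one point: you bound $q_\alpha(\mathbf{b}_j(j))$ directly by $(\gamma_{\alpha,\mathcal{O},\beta}/(nm))^{2(d-j)/(\beta-1)}\lambda_1(\Lambda^j)^2$ and then by $\lambda_j^2$, whereas the paper first passes to $q_\alpha(\varphi(\mathbf{b}_j))$ and then invokes a bound of the form $q_\alpha(\varphi(\mathbf{b}_j))\le(\gamma_{\alpha,\mathcal{O},\beta}/(nm))^{2(d-j)/(\beta-1)}\lambda_j^2$, which is what the preceding theorem gives for the \emph{Gram--Schmidt} vector, not the full basis vector; your route avoids that slip entirely. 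Your handling of the HKZ case $\beta=d$ (each $\mathbf{b}_j(j)$ is shortest in $\Lambda^j$, so $q_\alpha(\mathbf{b}_j(j))\le\lambda_d^2$, giving the factor $d$) is also the intended argument.

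=== END OF REVIEW ===
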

\begin{proof}
The proof follows almost identically to before:
\begin{align*}
    q_{\alpha}(\varphi(\mathbf{x}-\mathbf{v}))&=\sum_{i=1}^{d}q_\alpha\left(\left(x_j-\sum_{i=j}^d\varphi(v_i)\mu_{i,j}\right)\mathbf{b}_i(i)\right)\leq \sum_{i=1}^{d}q_\alpha\left(\left(x_j-\sum_{i=j}^d\varphi(v_i)\mu_{i,j}\right)\right)q_\alpha\left(\mathbf{b}_i(i)\right)
    \\&\leq \sum_{i=1}^d\rho_{\alpha,\mathcal{O}}q_\alpha(\varphi(\mathbf{b}_i)) \leq \rho_{\alpha,\mathcal{O}}\sum_{i=1}^d\left(\frac{\gamma_{\alpha,\mathcal{O},\beta}}{nm}\right)^{2\frac{d-i}{\beta-1}}\lambda_i^2 \leq \rho_{\alpha,\mathcal{O}}\left(\frac{\gamma_{\alpha,\mathcal{O},\beta}}{nm}\right)^{2\frac{d-1}{\beta-1}}\frac{1-\left(\frac{\gamma_{\alpha,\mathcal{O},\beta}}{nm}\right)^{2\frac{1-d}{\beta-1}}}{1-\left(\frac{\gamma_{\alpha,\mathcal{O},\beta}}{nm}\right)^{\frac{-2}{\beta-1}}}\lambda_d^2.
\end{align*}
The proof for $\beta=d$ is easily deduced following the argument above.
\end{proof}
\section{Acknowledgements}
We would like to thank the Engineering and Physical Sciences Research Council for funding the author's PhD. 

\newpage
\appendixpage
We will now prove proposition \ref{unitreduciblefields}. Each of these fields is either real quadratic, or a real quadratic extension of an imaginary quadratic field. Since $q_{\alpha}(\varphi(x\mathbf{v}))=\trace_{K/\mathbb{Q}}(x\langle \mathbf{v},\mathbf{v} \rangle_\alpha x^*)$ for any $x \in K$, $\mathbf{v} \in K^D$, it suffices to treat the case $D=1$, $\alpha=1$ since $\langle \mathbf{v},\mathbf{v} \rangle_\alpha$ is an arbitrary element of $\mathcal{P}$. We will denote $\langle \mathbf{v},\mathbf{v} \rangle_\alpha=v$. For any $x \in K$, where $K$ is either real quadratic or a real quadratic extension of an imaginary quadratic field, we have
\begin{align*}
    \trace_{K/\mathbb{Q}}(xvx^*)= |x|^2v + |\sigma(x)|^2\sigma(v),
\end{align*}
where $\sigma$ is the field automorphism that fixes all elements except the irrational part of the real quadratic subfield, say $\mathbb{Q}(\sqrt{d})$, such that $\sigma(\sqrt{d})=-\sqrt{d}$. 
\\
First, assume that $K=\mathbb{Q}(\sqrt{5})$. The ring of integers of $K$ is $\mathcal{O}=\mathbb{Z}\left[\frac{1+\sqrt{5}}{2}\right]$, and the unit group $\mathcal{O}^\times$ is generated by the elements $-1,\phi=\frac{1+\sqrt{5}}{2}$ under multiplication. Let $x$ denote an arbitrary nonzero element of $\mathcal{O}$. We assume without loss of generality that $|x| \geq |\sigma(x)|$. Assume that $n$ is the largest integer such that $\phi^n \leq |x|$. Then we have
\begin{align*}
    \phi^n \leq |x| \leq \phi^{n+1}=\phi \phi^n.
\end{align*}
Let $X=|x||\sigma(x)|$. Since we are considering non-units only, we have $X \geq 2$, $X \in \mathbb{Z}$. Hence,
\begin{align*}
    |\sigma(x)|=\frac{X}{|x|} \geq \frac{X}{\phi^{n+1}}=\frac{X}{\phi}\phi^{-n}.
\end{align*}
We have $\phi<2$, and $X \geq 2$, and so $|\sigma(x)| > \phi^{-n}=|\sigma(\phi)|^n$. Therefore,
\begin{align*}
    \trace_{K/\mathbb{Q}}(xvx^*)=|x|^2v+|\sigma(x)|^2\sigma(v) > |\phi|^{2n}v+|\sigma(\phi)|^{2n}\sigma(v)=\trace_{K/\mathbb{Q}}(\phi^n v \phi^n),
\end{align*}
which proves the claim for $K=\mathbb{Q}(\sqrt{5})$.
\\
We will use the above method to prove the rest of the cases. Let $K=\mathbb{Q}(\sqrt{2})$. The ring of integers of $K$ is $\mathcal{O}=\mathbb{Z}[\sqrt{2}]$, and the unit group $\mathcal{O}^\times$ is generated by the elements $-1, u=1+\sqrt{2}$ under multiplication. Let $x$ be a nonzero element of $\mathcal{O}$ satisfying $|x| \geq |\sigma(x)|$. As before, set $n$ such that
\begin{align*}
    u^n \leq |x| \leq u^{n+1},
\end{align*}
and
\begin{align*}
    |x||\sigma(x)|=X.
\end{align*}
We assume that $x$ is not a unit and so $X \geq 2$. Then
\begin{align*}
    |\sigma(x)|=\frac{X}{|x|} \geq \frac{X}{u^{n+1}}=\frac{X}{u}u^{-n}.
\end{align*}
Since $u <3$, if we have $X \geq 3$ we are done, by a similar argument to before, so we assume that $X=2$. The only elements with norm $2$ are associates of $\sqrt{2}$, and since we clearly have $|\sqrt{2}|>|1|=1$, we are done for this case too. Hence, for every nonzero $x \in \mathcal{O}$ there exists a $u \in \mathcal{O}^\times$ satisfying $|u| \leq |x|, |\sigma(u)| < |\sigma(x)|$ and so the claim holds for $K=\mathbb{Q}(\sqrt{2})$ by a similar argument to before.
\\
Now suppose $K=\mathbb{Q}(\sqrt{3})$. The ring of integers of $K$ is $\mathcal{O}=\mathbb{Z}[\sqrt{3}]$, and the unit group $\mathcal{O}^\times$ is generated by the elements $-1,u=2+\sqrt{3}$ under multiplication. Let $x$ be a nonzero element of $\mathcal{O}$ satisfying $|x| \geq |\sigma(x)|$. As before, set $n$ such that
\begin{align*}
    u^n \leq |x| \leq u^{n+1},
\end{align*}
and
\begin{align*}
    |x||\sigma(x)|=X.
\end{align*}
We assume that $x$ is not a unit and so $X \geq 2$. Then
\begin{align*}
    |\sigma(x)|=\frac{X}{|x|} \geq \frac{X}{u^{n+1}}=\frac{X}{u}u^{-n}.
\end{align*}
Since $u<4$, if we have $X \geq 4$ we are done, so we assume that $X \in \{2,3\}$. If $X=3$, $x$ must be an associate of $\sqrt{3}$ and clearly $|\sqrt{3}| >|1|=1$, so we are done for this case. A solution for $X=2$ would mean that there exists a solution to the Pell's equation
\begin{align*}
    p^2-3q^2=2
\end{align*}
for some integers $p,q$. We must have either both $p,q$ are even or both are odd, as otherwise the right hand side would be an odd integer. Clearly they cannot be even, as otherwise $4 \mid p^2-3q^2$, so we must have that both $p,q$ are odd. Using the fact that if $p$ is odd we must have $p^2 \equiv 1 \mod 8$, we get
\begin{align*}
    p^2 \equiv 3q^2+2 \equiv 1 \mod 8 \iff q^2 \equiv 5 \mod 8,
\end{align*}
which is a contradiction since this would imply that $q^2$ is not odd (in fact, there exist no solutions). Hence, there exists no solution for $X=2$. Therefore, for every nonzero $x \in \mathcal{O}$ there exists a $u \in \mathcal{O}^\times$ satisfying $|u| \leq |x|, |\sigma(u)| < |\sigma(x)|$ and so the claim holds for $K=\mathbb{Q}(\sqrt{3})$ by a similar argument to before.
\\ Now suppose that $K=\mathbb{Q}(\zeta_8)=\mathbb{Q}(i,\sqrt{2})$. The ring of integers of $K$ is $\mathcal{O}=\mathbb{Z}[\zeta_8]$, and the unit group $\mathcal{O}^\times$ is generated by the elements $\zeta_8, u=1+\zeta_8+\zeta_8^2$ under multiplication. Let $x$ denote an arbitrary nonzero element of $\mathcal{O}$ satisfying $|x| \geq |\sigma(x)|$. As before, set $n$ such that
\begin{align*}
    |u|^{2n} \leq |x|^2 \leq |u|^{2(n+1)},
\end{align*}
and
\begin{align*}
    |x|^2|\sigma(x)|^2=X.
\end{align*}
We assume that $x$ is not a unit and so $X \geq 2$. Then
\begin{align*}
    |\sigma(x)|^2=\frac{X}{|x|^2} \geq \frac{X}{|u|^{2(n+1)}}=\frac{X}{|u|^2}|u|^{-2n}.
\end{align*}
Since $|u|^2=3+2\sqrt{2}<6$, if we have $X \geq 6$ we are done so we assume that $2 \leq X \leq 5$. First, we show that there exists no solution for $X=3,5$. Any element $x \in \mathcal{O}$ can be rewritten as $x=\sum_{i=0}^3x_i \zeta_8^i$, where $x_i \in \mathbb{Z}$, and so
\begin{align*}
    |x|^2=\sum_{i=0}^3x_i^2 +\sqrt{2}(x_0x_1+x_0x_3-x_1x_2+x_2x_3) \in \mathbb{Z}[\sqrt{2}],
\end{align*}
and so $\norm_{K/\mathbb{Q}}(x)=p^2-2q^2$ for some integers $p,q$. For either $X=3$ or $X=5$, we need that both $p,q$ are odd or both are even, and so we cannot have that both are even since $4$ does not divide $3$ or $5$. Hence, $p,q$ must both be odd, so we have
\begin{align*}
    p^2 \equiv 2q^2+X \equiv 1 \mod 8,
\end{align*}
which yields $q^2 \equiv 3 \mod 8$ or $q^2 \equiv 7 \mod 8$ when $X=3$, and $q^2 \equiv \pm 2 \mod 8$ when $X=5$, all of which are contradictions by the assumption that $q$ must be odd. Therefore there are no solutions for $X=3,5$. The solutions for $X=4$ are associates of the integer $1+\zeta_8^2$, but we clearly have $|1+\zeta_8^2|^2=2>|1|^2=1$, so we are done for this case. Finally, we focus on the case $X=2$, which constitutes associates of the values $1 \pm \zeta_8$. For this case, we cannot find a unit $u$ such that $|u| \leq |x|, |\sigma(u)| \leq |\sigma(x)|$. However, we note that
\begin{align*}
    2\trace_{K/\mathbb{Q}}((1\pm \zeta_8)v(1 \pm \zeta_8)^*)&=2|1\pm \zeta_8|^2v + 2|\sigma(1 \pm \zeta_8)|^2 \sigma(v)
    \\&=(4 \pm 2\sqrt{2})v+(4 \mp 2\sqrt{2})\sigma(v)\\&=(v+\sigma(v))+(3\pm 2\sqrt{2})v+(3 \mp 2\sqrt{2})\sigma(v)\\&=\trace_{K/\mathbb{Q}}(v)+\trace_{K/\mathbb{Q}}((1 \pm \zeta_8+\zeta_8^2)v(1 \pm \zeta_8+ \zeta_8^2)^*),
\end{align*}
and so $\trace_{K/\mathbb{Q}}((1\pm \zeta_8)v(1 \pm \zeta_8)^*) \geq \min\{\trace_{K/\mathbb{Q}}(v), \trace_{K/\mathbb{Q}}((1 \pm \zeta_8+\zeta_8^2)v(1 \pm \zeta_8+ \zeta_8^2)^*)\}$. Hence, for all nonzero $x \in \mathcal{O}$, it is possible to find a $u \in \mathcal{O}^\times$ such that
\begin{align*}
    \trace_{K/\mathbb{Q}}(uvu^*) \leq \trace_{K/\mathbb{Q}}(xvx^*),
\end{align*}
which proves the claim for $K=\mathbb{Q}(\zeta_8)$.
\\
Now, suppose that $K=\mathbb{Q}(\zeta_{12})=\mathbb{Q}(i,\sqrt{3})$. The ring of integers of $K$ is $\mathcal{O}=\mathbb{Z}[\zeta_{12}]$, and the unit group $\mathcal{O}^\times$ is generated by the elements $\zeta_{12}, 1+\zeta_{12}$ under multiplication. Let $x$ denote an arbitrary nonzero element of $\mathcal{O}$ satisfying $|x| \geq |\sigma(x)|$. As before, set $n$ such that
\begin{align*}
    |u|^{2n} \leq |x|^2 \leq |u|^{2(n+1)},
\end{align*}
and
\begin{align*}
    |x|^2|\sigma(x)|^2=X.
\end{align*}
We assume that $x$ is not a unit and so $X \geq 2$. Then
\begin{align*}
    |\sigma(x)|^2=\frac{X}{|x|^2} \geq \frac{X}{|u|^{2(n+1)}}=\frac{X}{|u|^2}|u|^{-2n}.
\end{align*}
Since $|u|^2=2+\sqrt{3}<4$, if we have $X \geq 4$ we are done so we assume that $2 \leq X \leq 3$. Any element $x \in \mathcal{O}$ can be rewritten as $x=\sum_{i=0}^3x_i\zeta_{12}^i$ where $x_i \in \mathbb{Z}$, so
\begin{align*}
    |x|^2=\sum_{i=0}^3x_i^2+x_0x_2+x_1x_3+\sqrt{3}(x_0x_1+x_2x_3),
\end{align*}
and so $\norm_{K/\mathbb{Q}}(x)=p^2-3q^2$ for some integers $p,q$. We have previously established that the solution for $X=2$ does not exist. Moreover, there exists no solution for $X=3$. This can be shown as follows: assume first that $p$ is odd. This means that $q$ must be even. Then we have
\begin{align*}
    p^2 \equiv 3q^2+3 \equiv 1 \mod 8 \iff q^2 \equiv 2 \mod 8,
\end{align*}
which is clearly a contradiction, as we need $q^2 \equiv 4 \mod 8$ or $q^2 \equiv 0 \mod 8$. Now assume that $p$ is even, which means that $q$ must be odd. Then
\begin{align*}
    p^2 \equiv 3q^2+3 \equiv 6 \mod 8,
\end{align*}
which again is impossible. Hence there exists no solution for $X=3$, since $X$ must be positive. Therefore, for all nonzero $x \in \mathcal{O}$ there exists a $u \in \mathcal{O}^\times$ satisfying $|u| \leq |x|$, $|\sigma(u)| < |\sigma(x)|$ and so the claim holds for $K=\mathbb{Q}(\zeta_{12})$ by a similar argument to before.
\\
Finally, we show that $K^D$, $K=\mathbb{Q}(\sqrt{6})$ is not unit reducible for any $D$ by constructing a counterexample. Consider $v=(2+\sqrt{6})^2 \in \mathcal{P}$. The ring of integers of $K$ is $\mathcal{O}=\mathbb{Z}[\sqrt{6}]$ and the unit group is generated by the elements $-1, u=5+2\sqrt{6}$ under multiplication.
\begin{lemma}\label{convex}
Denote by $f: \mathbb{R} \to \mathbb{R}$ the function $f(x)=e^{kx}v+e^{-kx}\sigma(v)$ for any $k \in \mathbb{R}^\times$, $v \in \mathcal{P}$. Then $f(x) \geq f(0)$ for all $|x| \geq 1$ if and only if $f(1) \geq f(0)$, $f(-1) \geq f(0)$.
\end{lemma}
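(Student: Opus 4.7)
The forward direction is immediate since $\pm 1$ both satisfy $|x| \geq 1$, so the entire content lies in the reverse direction. The plan is to exploit the fact that $f$, written as a linear combination of two exponentials with strictly positive coefficients, is strictly convex on $\mathbb{R}$, and then pin down the location of its unique minimum using the hypothesis.

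Concretely, I would first compute
\begin{align*}
    f'(x) = k\bigl(e^{kx}v - e^{-kx}\sigma(v)\bigr), \qquad f''(x) = k^2\bigl(e^{kx}v + e^{-kx}\sigma(v)\bigr).
\end{align*}
Since $v \in \mathcal{P}$ means both $v > 0$ and $\sigma(v) > 0$, and $k \neq 0$, we have $f''(x) > 0$ for all $x$, so $f$ is strictly convex. Solving $f'(x_0)=0$ gives the unique minimiser
\begin{align*}
    x_0 = \frac{1}{2k}\ln\!\left(\frac{\sigma(v)}{v}\right),
\end{align*}
with $f$ strictly decreasing on $(-\infty,x_0)$ and strictly increasing on $(x_0,\infty)$.

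Next I would show that the hypothesis $f(1)\geq f(0)$ and $f(-1)\geq f(0)$ forces $x_0 \in [-1,1]$. Indeed, if $x_0 > 1$, then both $0$ and $1$ lie in the strictly decreasing region $(-\infty,x_0)$, giving $f(0) > f(1)$ and contradicting the first hypothesis; symmetrically, $x_0 < -1$ contradicts $f(-1)\geq f(0)$. With $x_0 \in [-1,1]$ in hand, monotonicity finishes the argument: for $x \geq 1$, both $1$ and $x$ lie in the increasing region $[x_0,\infty)$, so $f(x)\geq f(1)\geq f(0)$; and for $x \leq -1$, both $x$ and $-1$ lie in the decreasing region $(-\infty,x_0]$, so $f(x)\geq f(-1)\geq f(0)$.

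There is no real obstacle here: the entire proof is a one-variable convexity argument, and the only step worth writing carefully is the contrapositive showing $x_0 \in [-1,1]$, which is where the hypothesis is used. The result then follows directly from the monotonicity of $f$ on each side of $x_0$.
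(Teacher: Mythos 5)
Your proof is correct and rests on the same core idea as the paper's, namely that $f$ is convex; the paper establishes this by decomposing $f$ (with WLOG $v\geq\sigma(v)$) as a nonnegative combination of $e^{kx}$ and $\cosh(kx)$ and then compresses the remainder into ``the claim follows,'' whereas you verify $f''>0$ directly and spell out the monotonicity argument via the unique minimiser $x_0$. Both are sound; yours simply fills in the elementary calculus that the paper leaves to the reader.
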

\begin{proof}
Assume without loss of generality that $v \geq \sigma(v)$. We have
\begin{align*}
    f(x)=(v-\sigma(v))e^{kx}+\sigma(v)\cosh{x}.
\end{align*}
Both $e^x,\cosh{x}$ are convex functions, and so their linear sum must also be convex. The claim follows.
\end{proof}
Now, note that we have
\begin{align*}
    &\trace_{K/\mathbb{Q}}(uvu)=(5+2\sqrt{6})^2(2+\sqrt{6})^2+(5-2\sqrt{6})^2(2-\sqrt{6})^2=1940>20=\trace_{K/\mathbb{Q}}(v),
    \\&\trace_{K/\mathbb{Q}}(\sigma(u)v\sigma(u))=(5-2\sqrt{6})^2(2+\sqrt{6})^2+(5+2\sqrt{6})^2(2-\sqrt{6})^2=20=\trace_{K/\mathbb{Q}}(v).
\end{align*}
By Lemma \ref{convex}, it is impossible to find a unit $U$ such that $\trace_{K/\mathbb{Q}}(UvU)<\trace_{K/\mathbb{Q}}(v)$. However, we have
\begin{align*}
    \trace_{K/\mathbb{Q}}((2-\sqrt{6})v(2-\sqrt{6}))=2^3=8<\trace_{K/\mathbb{Q}}(v),
\end{align*}
and $\norm_{K/\mathbb{Q}}(2-\sqrt{6})=2$, so this constitutes a counterexample, proving that $K^D$ cannot be unit reducible for any $D$.
\end{document}